\documentclass{article}
\usepackage{bm}
\usepackage{tikz,mathpazo}
\usepackage{pgf}
\usepackage[top=2.5cm, bottom=2.5cm, left=3cm, right=3cm]{geometry}   
\usepackage{indentfirst}
\usepackage{graphicx}
\usepackage{graphics}
\usepackage[toc,page,title,titletoc,header]{appendix}
\usepackage{bbm}
\usepackage{listings}  
\usepackage{amsmath}
\usepackage{setspace} 
\usepackage{caption}
\usepackage{multirow} 
\usepackage{lipsum,multicol}
\usepackage{pdfpages}
\usepackage{float}
\usepackage{amsthm}
\usepackage{url}
\usepackage{colortbl}
\usepackage{subfigure}
\usepackage{epsfig}
\usepackage{epstopdf}
\usepackage{fancyhdr}
\usepackage{abstract}
\usetikzlibrary{shapes.geometric, arrows}
\usepackage{amssymb}
\usepackage{latexsym}
\usepackage{verbatim}
\usepackage[numbers]{natbib} 
\usepackage{booktabs}
\usepackage{subcaption}
\usepackage{enumitem}
\usepackage{siunitx}

\usepackage{algorithm}  
\usepackage{algpseudocode}

\usepackage{hyperref}
\hypersetup{
    colorlinks=true,
    linkcolor=blue,
    filecolor=magenta,      
    urlcolor=cyan,
    citecolor = cyan,
}

\allowdisplaybreaks[4]

\newcommand{\inner}[1]{\left\langle #1 \right\rangle}
\newcommand{\norm}[1]{\left\Vert #1\right\Vert}
\newcommand{\bb}[1]{\mathbb{#1}}

\newcommand{\conv}[0]{\mathrm{conv}\,}

\newcommand{\X}{{ \ca{X} }}

\newcommand{\ca}[1]{\mathcal{#1}}

\newcommand{\Diag}[0]{\mathrm{Diag}}

\newcommand{\tp}{^\top}

\newcommand{\xk}{{x_{k} }}

\newcommand{\D}{D}

\newcommand{\Rnd}{\mathbb{R}^{n\times d}}
\newcommand{\Rn}{\mathbb{R}^n}

\newcommand{\Rm}{\mathbb{R}^m}

\newcommand{\Rmd}{\mathbb{R}^{m\times d}}

\newtheorem{theo}{Theorem}[section]
\newtheorem{lem}[theo]{Lemma}
\newtheorem{prop}[theo]{Proposition}
\newtheorem{examples}[theo]{Example}

\newtheorem{defin}[theo]{Definition}
\newtheorem{rmk}[theo]{Remark}
\newtheorem{assumpt}[theo]{Assumption}

\usepackage[figuresright]{rotating}
\usepackage{pdflscape}

\newcolumntype{C}[1]{>{\centering\arraybackslash}m{#1}}

\usepackage{longtable}

\numberwithin{equation}{section}
\newcommand{\manualeqtag}[2]{%
  \refstepcounter{equation}%
  \tag{#1}\label{#2}%
}

\title{Decentralized Stochastic Subgradient-type Methods with Communication Compression for Nonsmooth Nonconvex Optimization}
\author{Siyuan Zhang, ~ Nachuan Xiao,  ~ Xin Liu}

\begin{document}
\maketitle
\begin{abstract}
In this paper, we consider the nonsmooth nonconvex decentralized optimization problem, where inter-agent communication is compressed. We propose a general framework that unifies various decentralized stochastic subgradient-type methods with unbiased compression and contractive compression with error compensation. By relating the consensus-error iterates and the averaged iterates to the trajectories of continuous-time differential inclusions, we establish global convergence for all methods encompassed by our framework when the objective functions are nonsmooth and lack Clarke regularity. Based on our framework, we further develop several compression-based methods, including decentralized stochastic subgradient methods utilizing sign-based regularization and gradient-tracking momentum. Preliminary numerical experiments empirically support our theoretical results and highlight the communication-accuracy trade-off of the newly developed methods. 
\end{abstract}

\textbf{Keywords:}  Nonsmooth optimization, decentralized optimization, communication compression, stochastic subgradient-type method, conservative field, Lyapunov function.
\section{Introduction}\label{sec:intro}

In this paper, we consider the following decentralized optimization problem (DOP) over an undirected connected network $\mathtt{G} = (\mathtt{V}, \mathtt{E})$, 
\begin{equation*}
\manualeqtag{DOP}{Prob_DOP}
\begin{aligned}
\min_{{\bm x}_1, {\bm x}_2, \ldots, {\bm x}_d \in \Rn} & \quad   \sum_{i=1}^d \mathbb{E}_{\zeta_{i} \sim \mathcal{P}_{i}} F_{i}({\bm x}_i, \zeta_{i}), \\
\text{s.t.} & \quad  {\bm x}_{i}={\bm x}_{j}, \quad \forall(i, j) \in \mathtt{E} .
\end{aligned}
\end{equation*}
Here, the node set $\mathtt{V}= \{1, \ldots, d\}$ stands for the collection of agents, and the edge set $\mathtt{E}$ represents the communication links between agents. Each agent $i$ is associated with a local cost function $F_{i}(\cdot, \zeta_{i})$ and a local data distribution $\mathcal{P}_{i}$. The expectation-form cost function is denoted by $f_i({\bm x}) := \mathbb{E}_{\zeta_{i} \sim \mathcal{P}_{i}} F_{i}({\bm x}, \zeta_{i}),$
which is assumed  to be locally Lipschitz continuous, nonconvex and possibly nonsmooth. 


The optimization problem \eqref{Prob_DOP} has wide applications in wireless edge computing \cite{deng2020edge, cicconetti2020decentralized, meng2021learning}, multi-agent target seeking \cite{suzuki1999distributed, pu2016noise}, privacy-preserving systems \cite{kasyap2021privacy, bonawitz2021federated} and distributed learning \cite{zhang2021multi, gao2021consensus}. By eliminating the need for a central server, decentralized optimization effectively mitigates inherent issues in traditional centralized optimization, such as server failures, privacy leakage, and communication bottlenecks. However, high-dimensional variables and limited bandwidth resources necessitate the development of more communication-efficient methods. One way to alleviate the
communication overhead is communication compression, which transmits compressed messages between the agents using compression operators. Mainstream compression operators include quantization operators \cite{alistarh2017qsgd, wen2017terngrad, zhang2017zipml, horvoth2022natural} and sparsification operators \cite{alistarh2018convergence, stich2018sparsified, richtarik2021ef21, fatkhullin2021ef21, zheng2019communication, qian2021error, beznosikov2023biased}, both of which were originally developed in distributed optimization with a central server to compress high-dimensional gradients. 

Nowadays, communication compression has been widely used in decentralized settings. One line of research employs randomized unbiased compression operators to transmit inexact information directly, including QDGD \cite{reisizadeh2019exact}, S-NEAR-DGD \cite{iakovidou2022s}, QDSG \cite{li2017distributed, doan2020convergence} and IC-GT \cite{shah2023stochastic}. These methods typically assume that the compression operators are variance-bounded—either absolutely or proportionally to the squared norm of the input variables—and that the objective functions are (strongly) convex and smooth. Under such assumptions, QDGD \cite{reisizadeh2019exact} proves that the expected deviation from the optimal solution gradually vanishes. S-NEAR-DGD \cite{iakovidou2022s} and IC-GT \cite{shah2023stochastic} show that, with fixed step-sizes, the iterates converge linearly in expectation to a neighborhood of the optimum. QDSG \cite{li2017distributed, doan2020convergence} establishes that the iterative sequence converges almost surely to a solution under properly chosen diminishing step-sizes and consensus parameters. In addition, to mitigate the imprecision introduced by compression, DCD-SGD and ECD-SGD \cite{tang2018communication} employ difference compression and extrapolation compression techniques. A concise comparison of representative decentralized stochastic gradient-type methods with unbiased compression is provided in Table \ref{tab:intro_unb_comp}. 

Another line of work \cite{koloskova2019decentralized, singh2021squarm, zhao2022beer, liao2022compressed, yau2022docom} focuses on employing randomized contractive compression operators in conjunction with (implicit) error-compensation mechanisms to achieve efficient communication while maintaining convergence.  
CHOCO-SGD \cite{koloskova2019decentralized2} is a decentralized stochastic gradient descent method that combines difference-based contractive compression with error compensation. The authors show that CHOCO-SGD attains a convergence rate close to that of the centralized counterpart even under arbitrarily high compression ratios, for objectives that are either strongly convex \cite{koloskova2019decentralized2} or nonconvex but Lipschitz smooth \cite{koloskova2019decentralized}. Moreover, SPARQ-SGD \cite{singh2022sparq} is an event-triggered variant of CHOCO-SGD that reduces the number of communication rounds. SQuARM-SGD \cite{singh2021squarm} can be interpreted as CHOCO-SGD with local-update Nesterov momentum, which guarantees ergodic convergence in expectation for smooth objective functions. C-GT \cite{liao2022compressed} and BEER \cite{zhao2022beer} incorporate gradient-tracking into decentralized SGD with difference-based contractive compression to handle heterogeneity across multi-agents, and \cite{liu2025compresseddecentralizedmomentumstochastic} adopts a compression technique together with a momentum-based adaptive learning rate to accelerate empirical convergence. DoCoM \cite{yau2022docom} and MoTEF \cite{IslamovStichNear} integrate communication compression with momentum tracking and error feedback, showing improved theoretical and empirical performance under arbitrary data heterogeneity. DEF-ATC \cite{nassif2025differential} studies decentralized learning with bounded-distortion compression operators, a class that subsumes contractive compression operators as a special case. It also generalizes the existing difference-based compression scheme with error feedback. A concise comparison of representative decentralized stochastic gradient-type methods with contractive compression is summarized in Table \ref{tab:intro_con_comp}.
More recently, \cite{liao2024robust} develops a robust compressed push-pull method (RCPP) for smooth nonconvex objectives over general directed graphs by combining gradient tracking with compression. To incorporate privacy constraints into communication-efficient decentralized learning over directed graphs, \cite{zhu2025dp} proposes DP-CSGP, which couples compressed stochastic gradient push with node-level $(\varepsilon,\delta)$-differential privacy guarantees and provides utility bounds for general smooth nonconvex objectives.

\begin{table}[t]
    \centering
    \fontsize{7}{12}\selectfont
    \resizebox{\textwidth}{!}{
    \begin{tabular}{|C{2.5cm}|C{1.3cm}|C{4.2cm}|C{3.0cm}|C{4.2cm}|}
    \hline
    {Method} & {\shortstack[c]{Update\\scheme}} & {Step-sizes} & {Conditions on $f_i$} & {\shortstack[c]{Conditions on\\compressed operator}} \\
    \hline
    \shortstack[c]{QDGD\\\cite{reisizadeh2019exact}}
    & GD
    & \shortstack[c]{Two diminishing scales\\$\alpha=\Theta(K^{-(1-\delta)/4})$,\\$\varepsilon=\Theta(K^{-3(1-\delta)/4})$}
    & Smooth, strongly convex
    & \shortstack[c]{Unbiased quantizer;\\absolute or relative\\second-moment bound}
    \\
    \hline
    \shortstack[c]{S-NEAR-DGD\\\cite{iakovidou2022s}}
    & GD
    & Constant
    & \shortstack[c]{Strongly convex,\\Lipschitz gradients}
    & \shortstack[c]{Inexact communication model\\with random distortion}
    \\
    \hline
    \shortstack[c]{QDSG\\\cite{li2017distributed,doan2020convergence}}
    & GD
    & \shortstack[c]{Diminishing\\(subgradient-type)}
    & \shortstack[c]{Convex / strongly convex,\\possibly nonsmooth}
    & \shortstack[c]{Random/adaptive quantization;\\resolution-dependent\\quantization error}
    \\
    \hline
    \shortstack[c]{IC-GT\\\cite{shah2023stochastic}}
    & GT
    & Constant
    & Smooth, strongly convex
    & \shortstack[c]{Inexact communication noise;\\probabilistic quantization or\\additive channel noise}
    \\
    \hline
    Our work \eqref{Eq_Framework}
    & \shortstack[c]{GD/GT\\GD-M}
    & \shortstack[c]{$\eta_k=o(1/\log k)$,\\three-timescale $\{\eta_k,\theta_k,\gamma_k\}$}
    & \shortstack[c]{Path-differentiable,\\coercive}
    & \shortstack[c]{Unbiased compression operator\\(Assumption~\ref{Assumption_compress0})}
    \\
    \hline
    \end{tabular}}
    \caption{A brief comparison of decentralized stochastic gradient-type methods with unbiased compression. Here, ``GD'', ``GT'', and ``GD-M'' are abbreviations of ``gradient descent'', ``gradient tracking'', and ``gradient descent with momentum'', respectively.}
    \label{tab:intro_unb_comp}
\end{table}

\begin{table}[t]
    \centering
    \fontsize{7}{12}\selectfont
    \resizebox{\textwidth}{!}{
    \begin{tabular}{|C{2.5cm}|C{1.3cm}|C{4.2cm}|C{3.0cm}|C{4.2cm}|}
    \hline
    {Method} & {\shortstack[c]{Update\\scheme}} & {Step-sizes} & {Conditions on $f_i$} & {\shortstack[c]{Conditions on\\compressed operator}} \\
    \hline
    \shortstack[c]{CHOCO-SGD\\\cite{koloskova2019decentralized2,koloskova2019decentralized}}
    & GD
    & \shortstack[c]{Strongly convex: $\eta_k=\frac{4}{\mu(a+k)}$;\\smooth nonconvex: fixed $\eta=\Theta(\sqrt{n/K})$}
    & \shortstack[c]{Smooth strongly convex,\\or smooth nonconvex}
    & \shortstack[c]{Contractive compressor\\applied to model differences}
    \\
    \hline
    \shortstack[c]{SPARQ-SGD\\\cite{singh2022sparq}}
    & GD
    & \shortstack[c]{Strongly convex: $\eta_k=\frac{8}{\mu(a+k)}$;\\smooth nonconvex: fixed $\eta=\sqrt{n/K}$}
    & \shortstack[c]{Smooth strongly convex,\\or smooth nonconvex}
    & \shortstack[c]{Contractive compressor with\\event-triggered communication}
    \\
    \hline
    \shortstack[c]{SQuARM-SGD\\\cite{singh2021squarm}}
    & GD-M
    & \shortstack[c]{Constant / standard SGD-type\\schedule with local momentum}
    & Smooth nonconvex
    & \shortstack[c]{Contractive compressor\\with error compensation}
    \\
    \hline
    \shortstack[c]{C-GT\\\cite{liao2022compressed}}
    & GT
    & Constant
    & Smooth strongly convex
    & \shortstack[c]{Contractive compressor in\\gradient-tracking communication}
    \\
    \hline
    \shortstack[c]{BEER\\\cite{zhao2022beer}}
    & GT
    & Constant
    & \shortstack[c]{Smooth nonconvex,\\arbitrary heterogeneity}
    & \shortstack[c]{Contractive compressor on\\model and tracking differences}
    \\
    \hline
    \shortstack[c]{DoCoM\\\cite{yau2022docom}}
    & GT-M
    & Constant
    & \shortstack[c]{Smooth nonconvex;\\PL for linear convergence}
    & \shortstack[c]{Contractive compressor with\\momentum tracking and EF}
    \\
    \hline
    \shortstack[c]{MoTEF\\\cite{IslamovStichNear}}
    & GT-M
    & Constant
    & \shortstack[c]{Smooth nonconvex,\\objective bounded below}
    & \shortstack[c]{Contractive compressor with\\momentum tracking and EF}
    \\
    \hline
    Our work \eqref{Eq_Framework}
    & \shortstack[c]{GD/GT\\GD-M}
    & \shortstack[c]{$\eta_k=o(1/\log k)$,\\three-timescale $\{\eta_k,\theta_k,\gamma_k\}$}
    & \shortstack[c]{Path-differentiable,\\coercive}
    & \shortstack[c]{Contractive compression operator\\(Assumption~\ref{Assumption_compress})}
    \\
    \hline
    \end{tabular}}
    \caption{A brief comparison of decentralized stochastic gradient-type methods with contractive compression. Here, ``GD'', ``GT'', ``GD-M'', and ``GT-M'' are abbreviations of ``gradient descent'', ``gradient tracking'', ``gradient descent with momentum'', and ``gradient tracking with momentum'', respectively.}
    \label{tab:intro_con_comp}
\end{table}

Decentralized optimization methods with communication compression are intrinsically well-suited for the distributed training of neural networks, as billions of parameters need to be transmitted across the agents. In modern deep neural network architectures, widely adopted nonsmooth activation functions, such as ReLU and leaky ReLU, have become essential building blocks. The use of these activations gives rise to nonsmooth loss functions that lack Clarke regularity \cite{Clarke1998NonsmoothAA} (hereafter referred to as \textit{non‑Clarke‑regular} functions). However, the existing convergence analyses in the above-mentioned works predominantly assume that each $f_i$ is convex or smooth nonconvex (at least differentiable), thus precluding various important
 applications in the distributed training of nonsmooth neural networks. A limited number of works \cite{li2021decentralized, yan2023compressed} study the convergence of proximal-type compression-based algorithms for decentralized composite optimization problems. Nevertheless, objective functions of these problems are still weakly convex, and hence Clarke-regular, which is essentially distinct from the nonsmoothness arising in non–Clarke-regular loss functions. Consequently, whether we can establish convergence theories for decentralized compression-based methods in nonconvex nonsmooth optimization, especially in the training of nonsmooth neural networks, is a question worth exploring and contemplating.

When training nonsmooth neural networks, the subgradients of the loss function are always computed using the automatic differentiation (AD) algorithm, which is widely adopted in various popular machine learning packages, such as TensorFlow, PyTorch, and JAX. Utilizing the chain rule, the AD algorithm constructs generalized subgradients through the composition of Jacobians of each network block. However,
as the chain rule fails for non‑Clarke‑regular functions (see examples in \cite{bolte2020mathematical}), the outputs of the AD algorithm may not belong to the Clarke subdifferential \cite{Clarke1998NonsmoothAA} of such a loss function. To tackle this issue, \cite{bolte2021conservative} introduces the \textit{conservative field}, a generalization of the Clarke subdifferential applicable to a broad class of functions referred to as \textit{path-differentiable functions}. Path-differentiable functions are sufficiently general to encompass a wide range of objective functions in real-world applications, particularly the loss functions of nonsmooth neural networks. More importantly, the conservative field admits chain rules for path-differentiable functions and thus contains the output of the AD algorithm. Based on the concept of conservative field, several recent works \cite{bolte2021conservative, davis2020stochastic, xiao2023adam, le2024nonsmooth, pauwels2021incremental, xiao2023convergence,  zhang2024decentralized} leverage the ordinary differential equation (ODE) approach \cite{benaim2005stochastic, benaim2006dynamics, borkar2008stochastic, duchi2018stochastic} to study the behavior of stochastic subgradient-based methods for non-Clarke-regular functions. However, extending these results to multi-agent settings with communication compression is nontrivial, particularly in the consensus analysis and the construction of appropriate differential inclusions.





\subsection{A general framework for decentralized stochastic subgradient-type methods with communication compression}

In this paper, we consider a general framework for \underline{d}\underline{e}centralized \underline{s}tochastic subgradient-type methods with communication \underline{c}ompression (DESC) in nonsmooth optimization,
\begin{equation*}
\manualeqtag{DESC}{Eq_Framework}
 {\bm Z}_{k+1} = \underbrace{(1-\theta_k) {\bm Z}_k + \theta_k {\bm Z}_k {\bm W} + \theta_{k} {\bm E}_{k+1}({\bm W}-\Sigma)}_{\text{Local aggregation with compression}} \underbrace{ - \eta_k ({\bm H}_k +  \Xi_{k+1})}_{\text{descent step}}.
\end{equation*}
Here, ${\bm Z}_k = [{\bm z}_{1,k}, \ldots, {\bm z}_{d,k}] \in \Rmd$ denotes the collection of local variables (including the local variables $\{{\bm x}_{i, k}\}_{i\in [d]}$ and auxiliary variables corresponding to specific subgradient methods), ${\bm W}$ is a mixing matrix, ${\bm H}_k  \in \Rmd$  represents the collection of noiseless update directions of all agents, and $\Sigma$ is a diagonal matrix which conforms to the mechanism of communication compression.  $\{{\bm E}_{k+1}\}$ and $\{\Xi_{k+1}\}$ are two sequences of random variables defined on probability space $(\Omega, \mathcal{F}, \mathbb{P})$, which correspond to the compression error and  evaluation noise on ${\bm H}_{k}$, respectively.  Furthermore, $\{\eta_k\}$ and $\{\theta_{k} \}$  are two-timescale step-sizes corresponding to the descent step and the local average. Typically, they are required to satisfy $\eta_{k}/\theta_{k}\to 0$, as $k$ goes infinity. 

In \eqref{Eq_Framework}, the $i$-th term of ${\bm Z}_k {\bm W} +  {\bm E}_{k+1}({\bm W}-\Sigma)$ can be interpreted as the weighted aggregation performed by agent $i$ on the variables transmitted by its neighbors, which are perturbed by compression errors. Further, a $\theta_k$-reallocation step together with a descent step is incorporated to produce the next iterate ${\bm Z}_{k+1}$. 

The flexibility in choosing ${\bm E}_{k+1}$ and ${\bm H}_{k}$ allows this framework to encompass a wide range of decentralized methods with communication compression. Throughout this paper, we focus on two important sub-frameworks developed from \eqref{Eq_Framework} by choosing different ${\bm E}_{k+1}$ and corresponding $\Sigma$. We first introduce the following sub-framework for developing decentralized stochastic subgradient-type methods with
unbiased compression,
\begin{equation*}
 \manualeqtag{DESC-Unb}{eq:uncom}
\left\{
\begin{aligned}
{\bm Z}_{k+1} & = (1-\theta_k) {\bm Z}_k + \theta_k {\bm Z}_k {\bm W} + \theta_{k} {\bm E}_{k+1}({\bm W}-\Diag ({\bm W})) - \eta_k ({\bm H}_k + \Xi_{k+1}), \\
{\bm E}_{k+1} & = C({\bm Z}_{k}, {\bm \omega}_{k+1})-{\bm Z}_{k} := [C({\bm z}_{1,k}, {\bm \omega}_{k+1}) - {\bm z}_{1,k}, \ldots, C({\bm z}_{d,k}, {\bm \omega}_{k+1})-{\bm z}_{d,k}].\\
\end{aligned}
\right.
\end{equation*}
Here, $C(\cdot, {\bm \omega}): \mathbb{R}^m \times \Omega \to \mathbb{R}^m$ is an unbiased compression operator, i.e. $\mathbb{E}_{{\bm \omega}}[C({\bm x}, {\bm \omega})]={\bm x}$. $\{{\bm E}_{k+1}\}$ is a martingale difference sequence with respect to $\mathcal{F}_{k}= \sigma(\{{\bm Z}_{i}: i\leq k\})$. As shown later in Section \ref{sec:methods}, with specific choices of ${\bm H}_{k}$, \eqref{eq:uncom} yields variants of several decentralized stochastic subgradient-type methods with unbiased compression. For instance, as we set ${\bm Z}_{k}={\bm X}_{k}$, ${\bm H}_{k}+ \Xi_{k+1} \in [D_{F_1(\cdot,\zeta_{i,k+1})}({\bm x}_{i,k}), \ldots,$  $ D_{F_d(\cdot,\zeta_{i,k+1})}({\bm x}_{i,k})]$, and $C$ to be a randomized quantization operator $Q$, \eqref{eq:uncom} reduces exactly to the nonsmooth version of QDGD \cite{reisizadeh2019exact}, a decentralized SGD method with quantization:
\begin{equation*}
\manualeqtag{QDGD+}{unb:dsgd0}
{\bm x}_{i,k+1} \in  (1-\theta_k +\theta_k {\bm W}(i,i)){\bm x}_{i,k} + \theta_k \sum_{j\in \mathcal{N}_i}{\bm W}(i,j) Q({\bm x}_{j,k}, {\bm \omega}_{k+1}) - \eta_k D_{F_i(\cdot,\zeta_{i,k+1})}({\bm x}_{i,k}),    
\end{equation*}
where $\mathcal{N}_{i}$ denotes the neighbors of agent $i$, $D_{F_i(\cdot,\zeta_{i,k+1})}$ is a conservative field for function $F_i(\cdot,\zeta_{i,k+1})$ that describes how $F_i(\cdot,\zeta_{i,k+1})$ is differentiated.

Additionally, based on \eqref{Eq_Framework}, we introduce another sub-framework for contractive compression,

\begin{equation*}
\manualeqtag{DESC-Con}{eq:contractive}
\left\{
\begin{aligned}
{\bm Z}_{k+1} & = (1-\theta_k) {\bm Z}_k + \theta_k {\bm Z}_k {\bm W} + \theta_{k} {\bm E}_{k+1}({\bm W}-{\bm I}_{d}) - \eta_k ({\bm H}_k + \Xi_{k+1}),\\ 
{\bm E}_{k+1} & = \hat{\bm Z}_{k+1} -{\bm Z}_k,\\
\hat{\bm Z}_{k+1} & =\hat{{\bm Z}}_{k} + \gamma_k C({\bm Z}_{k}-\hat{{\bm Z}}_{k}, {\bm \omega}_{k+1}).\\
\end{aligned}
\right.
\end{equation*}
Here, $C(\cdot, {\bm \omega}): \mathbb{R}^m \times \Omega \to \mathbb{R}^m$ is a contractive compression operator, i.e. $\mathbb{E}_{{\bm \omega}}[\|C(\bm{x}, {\bm \omega})-\bm{x}\|^2] \leq(1-\alpha)\|\bm{x}\|^2, \alpha \in (0,1].$ $\hat{\bm Z}_{k+1}= [\hat{\bm z}_{1,k+1}, \ldots, \hat{\bm z}_{d,k+1}]$ is the collection of local copies, where each $\hat{\bm z}_{i,k+1}$ is an inexact copy of ${\bm z}_{i, k}$ held by the neighbors of agent $i$. $\{\gamma_k\}$ is a diminishing sequence, and $-{\bm E}_{k+1}$ is essentially an compression error term that captures the discrepancy between the desired difference ${\bm Z}_{k}- \hat{\bm Z}_k$ and the compressed term  $\gamma_k C({\bm Z}_{k}-\hat{{\bm Z}}_{k}, {\bm \omega}_{k+1})$. The update formula of ${\bm E}_{k}$ and $\hat{\bm Z}_{k}$ can be further illustrated by the following error-compensation mechanism
\begin{equation*}
\left\{
\begin{aligned}
\Delta_k ~& = {\bm Z}_{k}-{\bm Z}_{k-1}+(-{\bm E}_{k}),  \hspace{2.7cm} \triangleleft \text{ error-compensation}     \\
\hat{\bm Z}_{k+1} ~& =\hat{{\bm Z}}_{k} + \gamma_k C(\Delta_k, {\bm \omega}_{k+1}), \hspace{2.72cm} \triangleleft \text{ compression and update} \\
-{\bm E}_{k+1} ~& =  \Delta_k - \gamma_k C(\Delta_k, {\bm \omega}_{k+1}).  \hspace{2.74cm} \triangleleft \text{ compression error}  \\  \end{aligned}
\right.
\end{equation*}
Similarly, as demonstrated in Section \ref{sec:methods}, with different choices of ${\bm H}_{k}$, \eqref{eq:contractive} corresponds to various decentralized stochastic subgradient-type methods with contractive compression. For instance, when we choose ${\bm Z}_{k}={\bm X}_{k}$, ${\bm H}_{k}+ \Xi_{k+1} \in [D_{F_1(\cdot,\zeta_{i,k+1})}({\bm x}_{i,k}), \ldots, D_{F_d(\cdot,\zeta_{i,k+1})}({\bm x}_{i,k})]$,  \eqref{eq:contractive} is a nonsmooth version of CHOCO-SGD \cite{koloskova2019decentralized, koloskova2019decentralized2}:
\begin{equation*}
\manualeqtag{CHOCO-SGD+}{con:dsgd0}
\left\{
\begin{aligned}
\hat{{\bm x}}_{j, k+1} & = \hat{{\bm x}}_{j, k} + \gamma_k C({\bm x}_{j, k}-\hat{{\bm x}}_{j, k}, {\bm \omega}_{k+1}), \quad j \in \mathcal{N}_{i}, \\
{\bm x}_{i, k+1} & \in  {\bm x}_{i,k} + \theta_{k} \sum_{j\in \mathcal{N}_{i}} {\bm W}(i,j) (\hat{\bm x}_{j, k+1} - \hat{\bm x}_{i, k+1}) -\eta_{k}D_{F_i(\cdot,\zeta_{i,k+1})}({\bm x}_{i,k}).\\
\end{aligned}
\right.
\end{equation*}

\subsection{Contributions}
The main contributions of our paper are three-fold.
\begin{itemize}
\item  \textbf{Unification}

Our framework \eqref{Eq_Framework} provides a unified decentralized update scheme with compression, which covers two mainstream compression approaches: unbiased compression and contractive compression with error compensation. Moreover, the proposed framework \eqref{Eq_Framework} incorporates vanilla stochastic subgradient descent (SGD) along with various SGD-based acceleration techniques, including momentum and gradient-tracking, into the update scheme. In particular, we show that the \textit{nonsmooth extensions} of common decentralized stochastic gradient-type  methods with communication compression fit into our framework, such as QDGD \cite{reisizadeh2019exact}, CHOCO-SGD \cite{koloskova2019decentralized2}, BEER \cite{zhao2022beer} and C-GT \cite{liao2022compressed}. 
\item \textbf{Convergence}

We establish the global convergence of  \eqref{Eq_Framework} by connecting the consensus-error iterates and averaged iterates to the trajectories of the delicately constructed  noiseless differential inclusions. To the best of our knowledge, this is the first work that rigorously proves global convergence to critical points for a wide range of existing decentralized compression-based methods in nonsmooth nonconvex optimization, especially in the training of nonsmooth neural networks.

\item  \textbf{Development} 

Based on our \eqref{Eq_Framework}, we develop decentralized compression-based variants of momentum SGD and SignSGD with theoretical guarantees. Preliminary numerical experiments demonstrate the efficiency of these methods and highlight the potential of our framework for designing decentralized stochastic subgradient-type methods with communication compression.
\end{itemize}

\subsection{Organization}
The rest of this paper is organized as follows. Section \ref{sec:pre} introduces the notations and  preliminary concepts used throughout the paper. Section~\ref{sec:framework} presents our proposed framework, provides a detailed consensus analysis, and establishes its global convergence. Section \ref{sec:methods} demonstrates that our framework covers nonsmooth extensions of existing methods and enables the development of new methods with convergence guarantees. Section \ref{sec:numeric} exhibits the results of preliminary numerical experiments. In the last section, we draw conclusions and discuss possible future research directions.

\section{Preliminary}\label{sec:pre}

\subsection{Notations}
The operator $\langle\cdot, \cdot\rangle$ represents the standard inner product, while $\|\cdot\|$ represents the $\ell_2$-norm of a vector or the spectral norm of a matrix.  
 $\|\cdot\|_{1}$ stands for the $\ell_1$-norm of a vector, and  $\|\cdot\|_{\mathrm{F}}$ refers to the Frobenius norm of a matrix. Let $\mathbb{B}({\bm x}, \delta):=\left\{\tilde{{\bm x}} \in \mathbb{R}^n:\|\tilde{{\bm x}}-{\bm x}\|^2\leq \delta^2\right\}$ denote the ball centered at ${\bm x}$ with radius $\delta$. For a given set $\mathcal{Y}, \operatorname{dist}({\bm x}, \mathcal{Y}):= \min_{y \in \mathcal{Y}}\|{\bm x}-{\bm y}\|$ represents the distance between ${\bm x}$ and a set $\mathcal{Y}$. The convex hull and $d$-fold Cartesian product of $\mathcal{Y}$ is denoted by $\conv \mathcal{Y}$ and $\mathcal{Y}^d$, respectively. The notation $\otimes$ stands for the Kronecker product. The symbol $\odot$ denotes the Hadamard product, $\Delta_m:= \{(\lambda_0, \ldots, \lambda_m): \lambda_i \geq 0, \sum_{i=0}^{m}\lambda_i =1\}$ stands for the  simplex of dimension $m$.

For any positive sequence  $\left\{\eta_k\right\}$, let $\lambda_{\eta}(0):=0, \lambda_{\eta}(i):=\sum_{k=0}^{i-1} \eta_k$, and $\Lambda_{\eta}(t):=\sup \left\{k \in\mathbb{N}:\right. $ $\left. t \geq \lambda_{\eta}(k)\right\}.$
More explicitly, $\Lambda_{\eta}(t)=p$, if $\lambda_{\eta}(p) \leq t<\lambda_{\eta}(p+1)$. The set-valued mapping sign : $\mathbb{R}^n \rightrightarrows \mathbb{R}^n$ is defined by
$$(\operatorname{sign}({\bm x}))_i= \begin{cases}\{-1\}, & {\bm x}_i<0;\\ {[-1,1],} & {\bm x}_i=0;\\ \{1\}, & {\bm x}_i>0.\end{cases}$$

For any $N>0,$ let $[N]:=\{1, \ldots, N\}$. The notation $\mathbb{R}_{+}$ represents the set of all nonnegative real numbers. Notations ${\bm 1}_{d}$ and  ${\bm e}_{i}$ stand for a vector of all $1$'s and $[0, \ldots, 1, \ldots, 0]^{\top}$, where $1$ is the $i$-th component. For two integers $i$ and $j$, $i \land j:= \min\{i,j\}$. Let $(\Omega, \mathcal{F}, \mathbb{P})$ denote the probability space. We use $\sigma({\bm X})$ to denote the sigma-algebra generated by the random variable ${\bm X}$.

We say that $\left\{\mathcal{F}_k\right\}_{k \in \mathbb{N}}$ is a filtration if $\left\{\mathcal{F}_k\right\}$ is a collection of $\sigma$-algebras satisfying $\mathcal{F}_0\subseteq \mathcal{F}_1\subseteq \cdots \subseteq \mathcal{F}_{\infty} \subseteq \mathcal{F}$.  A sequence of random variables $\{\xi_{k}\}$ is a martingale with respect to a filtration $\{\mathcal{F}_{k}\}$, if $\{\xi_{k}\}$ is adapted to the filtration $\{\mathcal{F}_{k}\}$ and $\mathbb{E}[\xi_{k+1} | \mathcal{F}_{k} ] = \xi_{k}$,  for all $ k \in \mathbb{N}$; $\{\xi_{k}\}$ is a supermartingale with respect to ${\mathcal{F}_{k}}$, if ${\xi_{k}}$ is adapted to $\{\mathcal{F}_{k}\}$ and $\mathbb{E}[\xi_{k+1} | \mathcal{F}_{k}] \leq \xi_{k}$, for all $k\in\mathbb{N}$. Moreover, a sequence of random variables  $\{\xi_{k}\}$  is a martingale difference sequence with respect to $\{\mathcal{F}_{k}\}$, if $\{\xi_{k}\}$ is adapted to the filtration $\{\mathcal{F}_{k}\}$ and $\mathbb{E}[\xi_{k+1} | \mathcal{F}_{k} ] = 0$ holds for all $k\in \mathbb{N}$.

In addition, we denote the set of agent $i$'s neighbors by $\mathcal{N}_{i}$, and $\mathcal{N}_{i}^{+}:= \mathcal{N}_{i} \cup \{i\}$. We define the summation function $f$ of \eqref{Prob_DOP} as
\begin{equation}
    \label{Eq_defin_f}
    f({\bm x}) := \frac{1}{d} \sum_{i = 1}^d f_i({\bm x}).
\end{equation}

\subsection{Mixing matrix}\label{mix}
The mixing matrix ${\bm W}$ conforms to the topology of the communication network and plays an important role in aggregating local information from neighboring agents. Generally, we assume the mixing matrix ${\bm W}$ is defined to satisfy the following properties, which are standard in the literature.
\begin{defin}\cite[Section 1]{LeiJIMO}\label{Def_mixing_matrix}
Given a connected graph $\mathtt{G} = (\mathtt{V}, \mathtt{E})$, we say ${\bm W} \in \bb{R}^{d\times d}$ is a mixing matrix of $\mathtt{G}$, if it satisfies 
\begin{enumerate}
\item ${\bm W}$ is symmetric.
\item ${\bm W}$ is doubly stochastic, namely, ${\bm W}$ is nonnegative and ${\bm W} {\bm 1}_d={\bm W}^{\top} {\bm 1}_d={\bm 1}_d$.
\item ${\bm W}(i, j)=0$, if and only if $i \neq j$ and $(i, j) \notin \mathtt{E}$.
\end{enumerate}
\end{defin}
Given a graph $\mathtt{G}$, various approaches can be used to select its corresponding mixing matrix, such as the Laplacian-based constant edge-weight matrix \cite{xiao2004fast} and the Metropolis constant edge-weight matrix \cite{xiao2006distributed}. For further details on choosing the mixing matrix, we refer the reader to \cite{nedic2018network, shi2015extra}.

Proposition \ref{Prop_preliminary_mixing_matrix} is a direct corollary of \cite[Perron-Frobenius Theorem]{pillai2005perron}, which characterize the spectral property of a mixing matrix ${\bm W}$.

\begin{prop}
    \label{Prop_preliminary_mixing_matrix}
    For any mixing matrix ${\bm W} \in \bb{R}^{d\times d}$ associated with a connected graph $\mathtt{G}$, all eigenvalues of ${\bm W}$ lie in $(-1,1]$, and ${\bm W}$ has a single eigenvalue equal to $1$ with the all-ones vector ${\bm 1_{d}}$ as its right eigenvector. 
\end{prop}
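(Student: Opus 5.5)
We need to prove: for a mixing matrix $\bm W$ of a connected graph, all eigenvalues lie in $(-1,1]$, and $1$ is a simple eigenvalue with right eigenvector ${\bm 1}_d$. We work from Definition~\ref{Def_mixing_matrix} and a Perron--Frobenius type argument. Since ${\bm W}$ is symmetric, its eigenvalues are real. Double stochasticity gives ${\bm W}{\bm 1}_d={\bm 1}_d$, so $1$ is an eigenvalue with eigenvector ${\bm 1}_d$.

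\textbf{Eigenvalues lie in $[-1,1]$.} Nonnegativity and unit row sums give $\|{\bm W}\|_\infty=1$, so every eigenvalue satisfies $|\lambda|\le 1$.

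\textbf{$1$ is simple.} Let ${\bm W}{\bm v}={\bm v}$ with ${\bm v}$ real, and let $i$ be an index where $v_i=\max_j v_j$. Then $v_i=\sum_j {\bm W}(i,j)v_j$ is a convex combination of entries no larger than $v_i$. Hence $v_j=v_i$ for every $j$ with ${\bm W}(i,j)>0$, which by item 3 of Definition~\ref{Def_mixing_matrix} includes all neighbors of $i$. Propagating this along paths, and using connectivity of $\mathtt G$, gives ${\bm v}\in\mathrm{span}\{{\bm 1}_d\}$. Because ${\bm W}$ is symmetric, hence diagonalizable, geometric and algebraic multiplicity coincide, so $1$ is a simple eigenvalue.

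\textbf{Excluding $-1$.} This is the delicate step, because it needs aperiodicity, and the only source of that is the diagonal. Item 3 of Definition~\ref{Def_mixing_matrix} says ${\bm W}(i,j)=0$ only when $i\neq j$ and $(i,j)\notin\mathtt E$; read literally, this forces ${\bm W}(i,i)>0$ for all $i$. We use that reading.

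Suppose ${\bm W}{\bm v}=-{\bm v}$ with ${\bm v}\neq 0$, and let $i$ maximize $|v_j|$; without loss of generality $v_i>0$. Then
\[
-v_i=\sum_j {\bm W}(i,j)v_j\ge -\sum_j{\bm W}(i,j)|v_j|\ge -v_i .
\]
Equality therefore holds throughout, which forces $v_j=-v_i$ for every $j$ with ${\bm W}(i,j)>0$. Taking $j=i$ is allowed since ${\bm W}(i,i)>0$, and it gives $v_i=-v_i$, so $v_i=0$. This contradicts ${\bm v}\neq 0$. Hence $-1$ is not an eigenvalue, and all eigenvalues lie in $(-1,1]$.

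An equivalent route is to note that ${\bm W}$ is irreducible (by connectivity) and primitive (by its positive diagonal). Perron--Frobenius \cite{pillai2005perron} then gives spectral radius $1$, attained by a simple eigenvalue with positive eigenvector ${\bm 1}_d$, and no other eigenvalue of modulus $1$.
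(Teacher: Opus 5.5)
Your proof is correct. The paper gives no argument beyond calling the statement a direct corollary of the Perron--Frobenius theorem, which is the route in your closing paragraph. Your main argument is instead self-contained and elementary:
\begin{itemize}
\item the $\|\cdot\|_\infty$ bound gives $|\lambda|\le 1$;
\item a maximum-principle argument, propagated along paths, shows the eigenspace of $1$ is $\mathrm{span}\{{\bm 1}_d\}$;
\item symmetry upgrades geometric simplicity to algebraic simplicity;
\item a max-modulus argument excludes $-1$. Equivalently, Gershgorin works, since each disc meets the real line in $[2{\bm W}(i,i)-1,1]$.
\end{itemize}
What your version buys is that it shows which hypothesis does which job. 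In particular, it isolates that excluding $-1$ rests entirely on ${\bm W}(i,i)>0$, which you correctly extract from the ``if and only if'' in item 3 of Definition~\ref{Def_mixing_matrix}. That is not a harmless reading choice. Without it, ${\bm W}=\bigl[\begin{smallmatrix}0&1\\1&0\end{smallmatrix}\bigr]$ on two nodes is symmetric, doubly stochastic and supported on the edge, yet has eigenvalue $-1$. The paper's one-line citation hides this, because Perron--Frobenius for irreducible matrices alone does not exclude other eigenvalues of modulus $1$. One needs primitivity, which is exactly the positive diagonal. The citation is shorter, but it tacitly relies on irreducibility (from connectivity) and primitivity (from the diagonal), and your argument makes both explicit.
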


\subsection{Set-valued mapping, Clarke subdifferential and conservative field}\label{sec:conserv}

A set-valued mapping $\Phi: \mathbb{R}^{n} \rightrightarrows \mathbb{R}^m $ is a mapping from $\mathbb{R}^n$ into the set of subsets of $\Rm$. The graph of $\Phi$ is defined by 
\begin{equation*}
\operatorname{graph} \Phi = \{ ({\bm x},{\bm z}) \mid {\bm x}\in \mathbb{R}^n, {\bm z}\in \Phi({\bm x})\}.
\end{equation*}
$\Phi$ is said to have a closed graph (or be graph-closed), if $\operatorname{graph} \Phi$ is a closed subset of $\Rn \times \Rm$. It is locally bounded, if for any ${\bm x}\in \mathbb{R}^n$, there exists a neighborhood $U_{{\bm x}}$ of ${\bm x}$ such that 
\begin{equation*}
\sup_{{\bm z}\in \Phi({\bm y}), {\bm y}\in U_{{\bm x}}} \| {\bm z}\|< + \infty.
\end{equation*}
In addition, $\Phi$ is convex-valued (resp. compact-valued), if $\Phi({\bm x})$ is a convex (resp. compact) subset of $\Rm$ for any ${\bm x} \in \Rn$. For $\delta>0$, we define 
\begin{equation*}
\Phi^\delta({\bm x}):=\bigcup_{{\bm y} \in \mathbb{B}({\bm x}, \delta)}(\Phi({\bm y})+\mathbb{B}({\bm 0}, \delta))
\end{equation*}
where ``+'' denotes the Minkowski sum.

\begin{defin}[Clarke subdifferential \citep{clarke1990optimization}]
For any given locally Lipschitz continuous function $f: \mathbb{R}^{n} \to \mathbb{R}$, and for any ${\bm x}\in \mathbb{R}^n$, the generalized directional derivative of  $f$ at ${\bm x}$ along the direction ${\bm d} \in \mathbb{R}^n$ is defined by  
\begin{equation*}
  f^{\circ}({\bm x};{\bm d}):=  \limsup_{{\bm y}\to {\bm x}, t\downarrow 0}\frac{f({\bm y}+t{\bm d})-f({\bm y})}{t}.
\end{equation*}
The Clarke subdifferential of $f$ at ${\bm x}\in \mathbb{R}^n$, denoted by $\partial f({\bm x})$, is given by 
\begin{equation*}
\partial f({\bm x}):= \left\{ {\bm u} \in \mathbb{R}^n:  f^{\circ}({\bm x};{\bm d}) \geq \langle {\bm u}, {\bm d} \rangle, \forall {\bm d}\in \mathbb{R}^n \right\}.
\end{equation*}
\end{defin}

Notice that $\partial f$ is a set-valued mapping that is convex-valued, graph-closed, and locally bounded. Based on the concept of generalized directional derivative, we now present the definition of Clarke regular functions.

\begin{defin}[Clarke regular \citep{clarke1990optimization}]
We say that $f$ is Clarke regular at ${\bm x} \in \Rn$, if for every direction
${\bm d}\in \Rn$, the one-sided directional derivative 
\begin{equation*}
    f^{*}({\bm x};{\bm d}):= \lim_{t\downarrow 0}\frac{f({\bm x}+t{\bm d})-f({\bm x})}{t}
\end{equation*}
exists, and equals the generalized directional derivative, i.e. $f^{*}({\bm x};{\bm d})= f^{\circ}({\bm x};{\bm d}).$ 
\end{defin}

Next, we present a brief introduction to concept of a  conservative field, which is used to describe the output of the AD algorithm applied to nonsmooth neural networks.

\begin{defin}[Conservative field \cite{bolte2021conservative}] \label{def:conservative}
    Let $D: \mathbb{R}^{n} \rightrightarrows \mathbb{R}^n$ be a nonempty set-valued mapping. We say that  $D$ is a conservative field if it is compact-valued and graph-closed, and for any absolutely continuous loop $\gamma: [0,1] \rightarrow \mathbb{R}^n$ satisfying $\gamma(0)=\gamma(1)$, it holds that 
        \begin{equation} \label{eq:cons}
     \int_0^1\max_{{\bm v} \in D(\gamma(t))}\langle\dot{\gamma}(t), {\bm v}\rangle \mathrm{d} t=0.
        \end{equation}
\end{defin}

It is worth noting that any conservative field is locally bounded \citep[Remark 3]{bolte2021conservative}. We now introduce the definition of the path-differentiable function corresponding to a conservative field.

\begin{defin}[Path-differentiable]
Let $D: \mathbb{R}^{n} \rightrightarrows \mathbb{R}^{n}$ be a conservative field. A function $f$ is said to be path-differentiable for $D$ if there exists ${\bm x}_0\in \mathbb{R}^{n}$ such that  
\begin{equation*}
f({\bm x})={f({\bm x}_{0})}+\int_0^t\langle\dot{\gamma}(s), v(s) \rangle \mathrm{d} s,
\end{equation*}
for any absolutely continuous curve $\gamma$ with $\gamma(0)={\bm x}_{0}$ and $\gamma(t)={\bm x}$ and some measurable selection $v(s) \in D(\gamma(s)) $. We also say that $D$ is a conservative field for $f$, denoted by $D_{f}$. 
\end{defin}

\begin{prop}[Corollary 1 of \cite{bolte2021conservative}] \label{lem:Clarke-Conservative}
Let $D_f:\mathbb{R}^n\rightrightarrows \mathbb{R}^n$ be a conservative field for a path-differentiable function $f: \mathbb{R}^n \to \mathbb{R}$. Then $D_{f}({\bm x})= \{\nabla f({\bm x})\}$ for almost every ${\bm x} \in \mathbb{R}^n$. Furthermore, $\partial f$ is a conservative field for $f$, and
\begin{equation*}
\partial f({\bm x}) \subseteq \conv(D_f ({\bm x}))
\end{equation*}
holds for all ${\bm x} \in \mathbb{R}^n$.
\end{prop}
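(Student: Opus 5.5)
The three claims are that $D_f({\bm x})=\{\nabla f({\bm x})\}$ almost everywhere, that $\partial f$ is itself a conservative field for $f$, and that $\partial f({\bm x})\subseteq \conv(D_f({\bm x}))$ everywhere. I will prove them in that order, using only Definition~\ref{def:conservative}, path-differentiability, and Rademacher's theorem.

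\textbf{Step 1: $f$ is locally Lipschitz and its one-sided directional derivatives are governed by $D_f$.} Since $D_f$ is locally bounded, integrating along straight segments with the path-differentiability formula gives the local Lipschitz property. So $f$ is differentiable almost everywhere by Rademacher. Fix ${\bm x}$, a direction ${\bm d}$, and the segment $\gamma(s)={\bm x}+s{\bm d}$. The formula gives
\begin{equation*}
f({\bm x}+t{\bm d})-f({\bm x})=\int_0^t\langle {\bm d},v(s)\rangle\,\mathrm{d}s
\end{equation*}
for some measurable selection $v(s)\in D_f({\bm x}+s{\bm d})$.

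\textbf{Step 2: almost-everywhere equality with the gradient.} This is the main obstacle, because pointwise information along a single segment does not control $D_f({\bm x})$ at that point.

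First apply Fubini: for each fixed ${\bm d}$, for a.e.\ ${\bm x}$ and a.e.\ $s$, the relation $\frac{\mathrm{d}}{\mathrm{d}s}f({\bm x}+s{\bm d})=\langle {\bm d},v\rangle$ holds. The loop condition \eqref{eq:cons} also holds with $\min$ in place of $\max$, by reversing the loop. Combining the two, the integrand must agree for every selection. Hence $\langle {\bm d},{\bm v}\rangle=\langle{\bm d},\nabla f(\gamma(s))\rangle$ for all ${\bm v}\in D_f(\gamma(s))$, for a.e.\ $s$.

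Next, take ${\bm d}$ ranging over a countable dense set. Then for a.e.\ ${\bm x}$, every ${\bm v}\in D_f({\bm x})$ satisfies $\langle{\bm d},{\bm v}\rangle=\langle {\bm d},\nabla f({\bm x})\rangle$ for all such ${\bm d}$. By density this forces ${\bm v}=\nabla f({\bm x})$.

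The delicate point is passing from ``a.e.\ along a.e.\ line'' to ``a.e.\ point'' through Fubini, applied on the measurable set of bad pairs. This is the route of Bolte--Pauwels, and I would follow it.

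\textbf{Step 3: the inclusion and the conservativity of $\partial f$.} For locally Lipschitz $f$, Clarke's gradient formula gives
\begin{equation*}
\partial f({\bm x})=\conv\{\lim\nabla f({\bm x}_j): {\bm x}_j\to{\bm x},\ {\bm x}_j\notin N\}
\end{equation*}
for any null set $N$. Choose $N$ to be the exceptional set from Step 2. Each limit is then a limit of elements of $D_f({\bm x}_j)$, so it lies in $D_f({\bm x})$ by graph-closedness. Taking convex hulls yields $\partial f({\bm x})\subseteq\conv D_f({\bm x})$.

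Finally, $\partial f$ is compact-valued and graph-closed. For any absolutely continuous curve, the chain rule for Lipschitz $f$ along a.e.\ $t$ gives $\frac{\mathrm{d}}{\mathrm{d}t}f(\gamma(t))=\langle\dot\gamma,{\bm v}\rangle$ for all ${\bm v}\in\conv D_f(\gamma(t))$. This holds because Step 2's argument, applied along $\gamma$, makes the pairing constant on $D_f(\gamma(t))$ and hence on its convex hull. In particular the identity holds on $\partial f(\gamma(t))$. Integrating along loops gives \eqref{eq:cons}, and integrating along paths gives the path-differentiability representation for $\partial f$.
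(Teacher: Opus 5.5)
Your proposal is correct and follows the route of the result the paper only cites without proof (Bolte--Pauwels, Theorem~1 and Corollary~1). That route is: $D_f=\{\nabla f\}$ a.e.\ via Rademacher, the forward--backward (max/min) loop identity along lines, and Fubini; then $\partial f\subseteq \conv D_f$ from Clarke's gradient formula over the exceptional null set together with graph-closedness; and conservativity of $\partial f$ by extending the a.e.\ chain rule from $D_f$ to its convex hull. The details you leave implicit are routine: Borel measurability of the bad set in the Fubini step (from semicontinuity of $x\mapsto \max_{v\in D_f(x)}\langle v,d\rangle$ and of the corresponding $\min$), and the selection-independence of path integrals (your Step~2 observation), which you need before Step~1 to pass from the paper's $x_0$-anchored definition of path-differentiability to the two-point segment formula.
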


Proposition \ref{lem:Clarke-Conservative} reveals that $\partial f$  is the  smallest convex-valued conservative field for $f$. Therefore, the concept of a conservative field can be regarded as an extension of the Clarke subdifferential. More importantly, when the conservative field $D_f({\bm x})$ is convex-valued, the condition ${\bm 0} \in \partial f({\bm x})$ implies that $0 \in D_f({\bm x})$.

\begin{rmk}\label{rmk:01}
The class of path-differentiable functions is general enough to encompass most objective functions encountered in real-world problems. Notably, the well-known Clarke regular functions \cite{Clarke1998NonsmoothAA} and semi-algebraic functions \cite{ojasiewicz1965EnsemblesS} are both path-differentiable. As discussed in \cite{davis2020stochastic}, an important subclass of path-differentiable functions is the class of definable functions, namely, functions whose graphs are definable in an o-minimal structure. In fact, most activation and loss functions used in deep neural networks are definable, including sigmoid, softplus, ReLU, leaky ReLU, hinge loss, and others. Owing to the invariance of definability under finite summation and composition \cite{davis2020stochastic}, any neural network built from definable blocks has a definable loss function and a definable conservative field, thereby making its loss function path-differentiable.
\end{rmk}

The following proposition shows that the definability of both $f$ and its conservative field $D_f$ implies the nonsmooth Morse–Sard property \citep{bolte2007clarke}.

\begin{prop}[Theorem 5 of \cite{bolte2021conservative}]\label{prop:finite}
 Let $f$ be a path-differentiable function that admits $\D_f$ as its conservative field. Suppose that both $f$ and $\D_f$ are definable over $\mathbb{R}^n$. Then the set $\left\{{f({\bm x})}:{\bm 0}\in \D_f({\bm x}) \right\}$ is finite.
\end{prop}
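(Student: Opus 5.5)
The statement is Theorem 5 of \cite{bolte2021conservative}, a nonsmooth Morse--Sard result. My plan is to reduce it to the classical definable Sard theorem by combining a stratification of $\D_f$ with the chain rule along curves that is built into the definition of a conservative field.

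The first step is stratification. Since $f$ and $\D_f$ are definable, I will take a Whitney ($C^1$, or $C^2$ if convenient) stratification $\{M_1,\dots,M_p\}$ of $\mathbb{R}^n$ into finitely many definable connected submanifolds. I choose it so that $f|_{M_j}$ is $C^1$ on each stratum. I also want the projection formula
\[
\D_f({\bm x}) \subseteq \nabla_{M_j} f({\bm x}) + N_{M_j}({\bm x}), \qquad {\bm x}\in M_j,
\]
where $\nabla_{M_j} f$ is the Riemannian gradient of $f|_{M_j}$ and $N_{M_j}$ is the normal space.

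This inclusion is the main obstacle, so I will argue it carefully. Path-differentiability gives, for almost every $t$ along any absolutely continuous curve $\gamma$,
\[
\tfrac{d}{dt} f(\gamma(t)) = \langle \dot\gamma(t), v\rangle \quad \text{for all } v\in \D_f(\gamma(t)).
\]
This is the chain-rule characterization of conservative fields; it follows from the loop condition \eqref{eq:cons} applied to a curve and its reverse. I then take $C^1$ curves lying inside $M_j$ through ${\bm x}$ with arbitrary tangent velocity. This shows that the tangential component of every $v\in\D_f({\bm x})$ equals $\nabla_{M_j} f({\bm x})$. The catch is that the chain rule holds only for almost every $t$, not at every point. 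I handle this as follows.
\begin{itemize}
\item I refine the stratification so that the exceptional set where the inclusion fails is definable of dimension less than $\dim M_j$.
\item Definability of $\D_f$ makes that set definable, and the a.e. statement along a foliation of $M_j$ by curves gives the dimension drop.
\item I then restratify, by induction on dimension, so that the exceptional set becomes a union of lower-dimensional strata.
\end{itemize}

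With the projection formula in hand, ${\bm 0}\in \D_f({\bm x})$ with ${\bm x}\in M_j$ forces $\nabla_{M_j} f({\bm x})={\bm 0}$, so ${\bm x}$ is a critical point of the $C^1$ definable function $f|_{M_j}$. Hence
\[
\{f({\bm x}) : {\bm 0}\in\D_f({\bm x})\} \subseteq \bigcup_{j=1}^p f\bigl(\mathrm{crit}(f|_{M_j})\bigr).
\]
Each set on the right is definable in $\mathbb{R}$. By the definable Morse--Sard theorem, which holds for $C^1$ definable functions, each has measure zero. A definable subset of $\mathbb{R}$ is a finite union of points and intervals, so a measure-zero definable subset is finite. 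A finite union of finite sets is finite, which proves the claim.
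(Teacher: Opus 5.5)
Your proposal is correct and takes essentially the same route as the source: the paper gives no proof of its own and imports Theorem 5 of \cite{bolte2021conservative}, which is proved by first building a $D_f$-variational stratification and then applying the definable Sard theorem on each stratum. That stratification carries the projection formula, obtained from the chain-rule characterization of conservative fields plus a bad-set dimension-drop and refinement argument; one detail to make explicit is that the dimension drop should use curves in every coordinate direction of a chart of $M_j$, together with Fubini, rather than a single foliation, so that any tangential discrepancy of some $v\in\D_f({\bm x})$ is detected.
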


Finally,  based on the concept of a conservative field, we introduce the definition of critical points for the optimization problem \eqref{Prob_DOP}.

\begin{defin}
    \label{Defin_critical_points}
    Let $f$ in \eqref{Eq_defin_f} be a path-differentiable function that admits $\D_f$ as its convex-valued conservative field. A point \({\bm X} \in \mathbb{R}^{n \times d}\) is said to be a \(D_f\)-critical point of~\eqref{Prob_DOP} if it satisfies the consensus condition
  ${\bm X} = \frac{1}{d}{\bm X}{\bm 1_{d}} {\bm 1}_d\tp$  and 
    \begin{equation*}
       {\bm 0} \in \D_f(\frac{1}{d}{\bm X}{\bm 1_{d}}).  
    \end{equation*}
   Similarly,  ${\bm X}\in \Rnd$ is called a  Clarke-critical (or \(\partial f\)-critical) point of \eqref{Prob_DOP}, if it satisfies the same consensus condition ${\bm X} = \frac{1}{d}{\bm X}{\bm 1_{d}} {\bm 1}_d\tp$ and 
    \begin{equation*}
   {\bm 0} \in \partial f(\frac{1}{d}{\bm X}{\bm 1_{d}}).
     \end{equation*}
\end{defin}

\subsection{Stochastic approximation and differential inclusion}

ODE approaches \cite{benaim2005stochastic,borkar2009stochastic,duchi2018stochastic,davis2020stochastic} are powerful tools for analyzing convergence in stochastic approximation, particularly for the iterates of stochastic subgradient-type methods. These approaches characterize the convergence of the iterates via the asymptotic behavior of the dynamics of the associated differential inclusion. We first recall some basic definitions related to differential inclusions.

\begin{defin}
Let $\Phi: \mathbb{R}^n \rightrightarrows \mathbb{R}^n$ be a set-valued mapping. An absolutely continuous curve $\gamma: \mathbb{R}_{+} \rightarrow \mathbb{R}^n$ is called a solution (or trajectory) of the  differential inclusion 
\begin{equation}\label{eq:diff}
\dot{\bm x}(t)\in \Phi({\bm x}),
\end{equation}
with initial condition $\gamma(0) = {\bm x}_0$, if   $\gamma'(t)\in \Phi(\gamma(t))$ for almost all $t\in \mathbb{R}_{+}$. 
\end{defin}

\begin{defin}[Lyapunov function]
Let $\mathcal{B} \subset \mathbb{R}^n$ be a closed set. A continuous function $\psi: \mathbb{R}^n \rightarrow \mathbb{R}$ is said to be a Lyapunov function for the differential inclusion \eqref{eq:diff} with a stable set $\mathcal{B}$, if for any solution $\gamma$ to \eqref{eq:diff} and any $t>0$, it holds that 
\begin{equation*}
    \psi(\gamma(t))\leq \psi(\gamma(0)).
\end{equation*}
Moreover, for  $\gamma(0) \notin \mathcal{B}$, it holds for all $t> 0$ that 
\begin{equation*}
    \psi(\gamma(t)) < \psi(\gamma(0)).
\end{equation*}
\end{defin}

Consider iterates $\{{\bm x}_{k}\}$ generated by the following update scheme:
\begin{equation}
    \label{Eq_stable_random}
    {\bm x}_{k+1} \in {\bm x}_k - c\eta_k\left( \Phi^{\delta_k}({\bm x}_k) + \upsilon_{k+1}  \right), 
\end{equation}
where $\{\eta_k\}$ is a non-summable positive sequence of step-sizes, $\{\delta_{k}\}$ is a nonnegative sequence, and $\upsilon_{k+1}$ is a random noise term added when evaluating $\Phi({\bm x}_k)$. The continuous-time interpolated process $u: \mathbb{R}_{+} \rightarrow \mathbb{R}^n$  induced by \eqref{Eq_stable_random} is given by
\begin{equation*}
u(\lambda_{\eta}(k) +s) = {\bm x}_{k} +  \frac{{\bm x}_{k+1}-{\bm x}_k}{\eta_{k}} s, \quad s \in [0,\eta_{k}).
\end{equation*}
where $\lambda_{\eta}(0):=0$ and $\lambda_{\eta}(k):= \sum_{i=0}^{k-1} \eta_{i}$, for $k \geq 1$.

The following Lemma \ref{lem_stable_random} plays an important role in demonstrating the convergence properties of \eqref{Eq_stable_random}, integrating results from \cite[Lemma 2.20]{xiao2023adam}, \cite[Theorem 3.6, Proposition 3.27]{benaim2005stochastic}. In their proof, they show that the interpolated process above is a perturbed solution of differential inclusion \eqref{eq:diff}. For further details, interested readers are referred to \cite{benaim2006dynamics} and \cite{benaim2005stochastic}.

\begin{assumpt}
\label{Assumption_stable_random}
\mbox{}
\begin{enumerate}
\item The sequence $\{{\bm x}_k\}$ is uniformly bounded, and $\lim_{k \to \infty}\delta_{k} =0$. 
\item There exists a locally Lipschitz continuous Lyapunov function $\psi: \Rn \to \bb{R}$ for the differential inclusion 
        \begin{equation}
    \label{Eq_stable_DI}
    \frac{\mathrm{d}{\bm x}}{\mathrm{d}t} \in -\Phi({\bm x}),
\end{equation} 
with a stable set $\ca{B}$. Moreover, the set $\{\psi({\bm x}): {\bm x} \in \ca{B}\}$ is a finite subset of $\bb{R}$. 
\item For any $T>0$, it holds that 
\begin{equation*}
    \lim_{s \to +\infty}\sup_{s\leq i\leq \Lambda_{\eta}(\lambda_{\eta}(s) + T)} \norm{ \sum_{k = s}^{i}\eta_k \upsilon_{k+1} } =0. 
\end{equation*}
\end{enumerate}
\end{assumpt}

\begin{lem}\label{lem_stable_random}
Suppose Assumption \ref{Assumption_stable_random} holds. Let $\X_0$ be a compact subset of $\Rn$, and let the sequence $\{x_{k}\}$ be generated by the update scheme \eqref{Eq_stable_random} with $x_0 \in \X_0$.  Then it follows that
    \begin{equation*}
        \lim_{k\to \infty} \mathrm{dist} \left(\xk, \mathcal{B} \right)  = 0, 
    \end{equation*}
and the sequence $\{\psi({\bm x}_k)\}$ converges to $\psi({\bm x}^*)$, where ${\bm x}^* \in \mathcal{B}$.
\end{lem}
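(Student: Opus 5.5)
The plan is the standard ODE method of Benaïm–Hofbauer–Sorin. First show that the interpolated process $u$ is a perturbed solution of the differential inclusion \eqref{Eq_stable_DI}. Then show that its limit set is internally chain transitive. Finally, use the Lyapunov function $\psi$ together with the finiteness of $\psi(\mathcal{B})$ to conclude. The constant $c>0$ is absorbed by rescaling time (or equivalently $\eta_k$), so take $c=1$.

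\textbf{Step 1 (perturbed solution).} Write ${\bm x}_{k+1} = {\bm x}_k - \eta_k({\bm d}_k + \upsilon_{k+1})$ with ${\bm d}_k \in \Phi^{\delta_k}({\bm x}_k)$. On each interval $[\lambda_\eta(k),\lambda_\eta(k+1))$ the derivative of $u$ is $-({\bm d}_k+\upsilon_{k+1})$. Since $\{{\bm x}_k\}$ is bounded, $\Phi$ is locally bounded, and $\delta_k\to 0$, the sequence ${\bm d}_k$ is bounded. Combined with item 3 of Assumption \ref{Assumption_stable_random}, this shows that $\eta_k\upsilon_{k+1}\to 0$, so $\|{\bm x}_{k+1}-{\bm x}_k\|\to 0$. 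Hence $u$ is uniformly continuous and $\dist(u(t),{\bm x}_{\Lambda_\eta(t)})\to 0$.

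It follows that ${\bm d}_k \in \Phi^{\tilde\delta(t)}(u(t))$ for some $\tilde\delta(t)\to 0$. Item 3 says exactly that the accumulated noise $\sum\eta_k\upsilon_{k+1}$ over any window of length $T$ vanishes uniformly. This is the perturbed-solution definition in \cite[Def.~II]{benaim2005stochastic}, where $\Phi$ is taken to be graph-closed, locally bounded and (after passing to its convex hull) convex-valued, as in the applications. By \cite[Theorem 3.6]{benaim2005stochastic}, the limit set $L=\bigcap_{t}\overline{u([t,\infty))}$ is nonempty, compact (by boundedness), and internally chain transitive for \eqref{Eq_stable_DI}.

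\textbf{Step 2 (Lyapunov argument).} Since $\psi$ is a Lyapunov function with stable set $\mathcal{B}$ and $\psi(\mathcal{B})$ is finite, it has empty interior in $\bb{R}$. Then \cite[Proposition 3.27]{benaim2005stochastic} applies: every internally chain transitive set $L$ satisfies $L\subseteq\mathcal{B}$ and $\psi$ is constant on $L$. Boundedness of $\{{\bm x}_k\}$ together with $L\subseteq\mathcal{B}$ gives $\dist({\bm x}_k,\mathcal{B})\to0$. Continuity of $\psi$ and constancy of $\psi$ on $L$, say with value $\psi({\bm x}^*)$ for some ${\bm x}^*\in L\subseteq\mathcal{B}$, give $\psi({\bm x}_k)\to\psi({\bm x}^*)$, because every cluster point of $\{{\bm x}_k\}$ lies in $L$. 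The compactness of $\X_0$ plays no role beyond guaranteeing a bounded starting point.

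\textbf{Main obstacle.} The delicate step is Step 1. One must verify the perturbed-solution conditions for the \emph{enlarged} maps $\Phi^{\delta_k}$ evaluated at the iterates ${\bm x}_k$ rather than at $u(t)$. This requires $\|{\bm x}_{k+1}-{\bm x}_k\|\to 0$, which is derived from item 3 by taking $i=s$. It also requires the convex-valuedness and upper semicontinuity needed in \cite{benaim2005stochastic}; if $\Phi$ is not convex-valued, I would replace it by $\conv\Phi$, which enlarges the inclusion. I would follow \cite[Lemma 2.20]{xiao2023adam}, which handles exactly this $\delta_k$-enlargement.
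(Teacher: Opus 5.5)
Your Steps 1 and 2 are the same argument the paper relies on. The paper gives no proof of its own: it cites \cite[Lemma 2.20]{xiao2023adam} and \cite[Theorem 3.6, Proposition 3.27]{benaim2005stochastic}, i.e.\ bounded perturbed solution, then an internally chain transitive limit set, then $L\subseteq\mathcal{B}$ with $\psi|_L$ constant because $\psi(\mathcal{B})$ has empty interior. Under the hypotheses that this citation carries implicitly, your argument is sound, but you use one of them silently. Item 3 with $i=s$ only gives $\eta_k\upsilon_{k+1}\to0$. To get $\|{\bm x}_{k+1}-{\bm x}_k\|\to0$ you also need $\eta_k{\bm d}_k\to0$, hence $\eta_k\to0$. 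This is part of the standing setup in \cite{benaim2005stochastic} but is not in the lemma. It cannot be dropped: take $\Phi({\bm x})=\{{\bm x}\}$, $\psi=\frac12\|{\bm x}\|^2$, $\mathcal{B}=\{{\bm 0}\}$, $c=1$, $\eta_k\equiv2$, $\delta_k=0$, $\upsilon_{k+1}=0$. Then ${\bm x}_{k+1}=-{\bm x}_k$ is bounded and never approaches $\mathcal{B}$.

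The genuine gap is your fallback ``if $\Phi$ is not convex-valued, replace it by $\conv\Phi$.'' Enlarging the map is free on the iterate side, but the Lyapunov hypothesis is given only for $\dot{\bm x}\in-\Phi({\bm x})$. The convexified inclusion has more trajectories, in particular stationary ones wherever ${\bm 0}\in\conv\Phi({\bm x})\setminus\Phi({\bm x})$, and $\psi$ need not decrease along them. So \cite[Proposition 3.27]{benaim2005stochastic} cannot be invoked, and in fact the lemma is false without convexity.

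Here is a counterexample on $\mathbb{R}$. Let $\Phi(x)=\{1\}$ for $x\neq0$ and $\Phi(0)=\{-1,1\}$; this map is graph-closed and locally bounded. An absolutely continuous curve has zero derivative a.e.\ on its zero set, so every trajectory is $x(t)=x(0)-t$. Hence $\psi(x)=x$ is a Lyapunov function with, say, $\mathcal{B}=\{5\}$. Now take $\eta_k\to0$ nonincreasing with $\eta_{k-1}\le2\eta_k$, $\delta_k=2\eta_k$, $x_0=0$, $\upsilon_{k+1}=0$. Choose $d_k=1$ if $x_k>0$, and $d_k=-1\in\Phi(0)\subseteq\Phi^{\delta_k}(x_k)$ if $x_k\le0$. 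By induction $|x_k|\le\eta_{k-1}$, so $x_k\to0$ and $\psi(x_k)\to0\neq\psi(5)$.

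So you should assume $\Phi$ is convex-valued (or that $\psi$ is Lyapunov for the convexified inclusion) rather than try to derive it. This costs nothing for the paper: every application uses a convex-valued map, namely $\Phi({\bm Z})={\bm Z}({\bm I}_d-{\bm W})$ in the consensus propositions and $\conv(\frac1d\sum_i\Phi_i)$ in Theorem \ref{thm:convergence}.
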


\section{Convergence Guarantees for General Framework}\label{sec:framework}

In this section, we establish the global asymptotic convergence properties for \eqref{Eq_Framework}. 
Section \ref{sec:basic} discuss about the relationship between ${{\bm Z}_k}$ and ${\bm H}_{k}$  in the framework \eqref{Eq_Framework}, and introduces basic assumptions for \eqref{Eq_Framework}. Section \ref{consensus} relates the consensus-error iterates to the trajectories of a noiseless continuous-time differential inclusion, and demonstrates the consensus properties of two sub-frameworks. In Section \ref{glo_con}, we connects the 
averaged iterates to another continuous-time differential inclusion, and establish the global convergence to the stable set of corresponding differential inclusion.

\subsection{Basic assumptions}\label{sec:basic}

In the nonsmooth setting, a common choice for  noiseless update directions ${\bm H}_{k}$ is  
\begin{equation*}
{\bm H}_{k} \in [\Phi_{1}({\bm z}_{1,k}), \ldots, \Phi_{d}({\bm z}_{d,k})], 
\end{equation*}
where each $\Phi_{i}$ is a set-valued mapping, such as the Clarke subdifferential $\partial f_i$ or a conservative field $D_{f_{i}}$ discussed in Section \ref{sec:conserv}. This choice naturally yields a stochastic subgradient descent method with communication compression. 

To encompass a broader class of subgradient-type methods, we introduce a family of set-valued mappings $\{\Phi_i({\bm z})\}_{i=1}^{d}$ that possess a “Lyapunov property” to constrain ${\bm H}_{k}$. Furthermore, we specify the form of the evaluation noise $\Xi_{k+1}$ and the step-sizes used in framework \eqref{Eq_Framework}.

\begin{assumpt}
\label{Assumption_framework}
\mbox{}
\begin{enumerate}
\item[(1)] There exists a sequence $\{\epsilon_k\} \subseteq \mathbb{R}_{+}$ and a family of locally bounded  and graph-closed set-valued mappings $\Phi_{i}, i \in [d]$ such that 
\begin{equation}
    \frac{1}{d}{\bm H}_k {\bm 1}_d \in \conv\left( \frac{1}{d} \sum_{i = 1}^d \Phi_i^{\epsilon_k}({\bm z}_{i,k}) \right), \quad \forall k\in \mathbb{N}. 
\end{equation}
Moreover, $\{{\bm H}_{k}\}$ is bounded and $\{\epsilon_k\}$ 
 is diminishing whenever $\{{\bm Z}_{k}\}$ is bounded. 

\item[(2)] The differential inclusion
\begin{equation}\label{eq:diff_inclu}
\frac{\mathrm{d}{\bm z}}{\mathrm{d}t} \in - \Phi({\bm z}):= -\conv \left( \frac{1}{d}\sum_{i = 1}^d \Phi_i({\bm z}) \right),
\end{equation}
admits a locally Lipschitz continuous Lyapunov function $\psi: \Rm \to \bb{R}$, whose stable set is denoted by $\ca{A}$, and  $\{\psi({\bm z}): {\bm z} \in \ca{A}\}$ is a finite subset of $\mathbb{R}$.

\item[(3)] The evaluation noise $\{\Xi_{k+1}\}$ is a martingale difference sequence, and it is uniformly bounded whenever $\{{\bm Z}_{k}\}$ is bounded. 

\item[(4)] The sequences of step-sizes  $\{\eta_{k}\}$ and $\{\theta_{k}\}$ satisfy the following conditions:
\begin{equation}\label{eq:ologk}
\sum_{i = 0}^{\infty} \eta_k = +\infty, \quad \lim_{k\to +\infty} \frac{\eta_k}{\theta_k} = 0, \quad \lim_{k\to +\infty} \theta_{k}\log(k) =0, \quad \lim_{k\to +\infty} \frac{\theta_{k}^2}{\eta_k}\log(k) =0 . 
\end{equation}
Moreover, the sequence $\{\gamma_k\}$ satisfies
\begin{equation}\label{eq:gammak}
\gamma_0 =1,  \quad \lim_{k\to \infty}\gamma_k \log(k) =0, \quad \lim_{k\to \infty}\frac{\theta_{k}}{\gamma_k} =0.  
\end{equation}

\end{enumerate}
\end{assumpt}

Assumption \ref{Assumption_framework}-(1) uses a family of set-valued mappings $\{\Phi_{i}\}$ to characterize the relationship between the averaged update direction $\frac{1}{d}{\bm H}_k {\bm 1}_d$ and local variables $\{{\bm z}_{i,k}\}_{i\in [d]}$. Note that we do not require each column of ${\bm H}_{k}$ to belong to $\Phi_i({\bm z}_{i,k})$, but instead merely restrict the averaged update direction to lie in the convex hull of the Minkowski sum of $\epsilon_k$-neighborhoods of the set-valued mapping's images. This yields a very general assumption, which enables the development of decentralized SGD-type methods within our framework.

Assumption \ref{Assumption_framework}-(2) is frequently employed in the literature, e.g., \cite{benaim2005stochastic, borkar2008stochastic, bian2009subgradient, davis2020stochastic, bolte2021conservative, josz2023lyapunov}. It captures the descent property of $\psi$ along trajectories of the differential inclusion \eqref{eq:diff_inclu}, while the property that $\{\psi({\bm z}): {\bm z} \in \ca{A}\}$ is finite corresponds to the weak Sard property in \cite[Assumption B]{davis2020stochastic}, which holds when $f$ is a definable function under the selection $\Phi:= \partial f$ and $\psi:= f$.

Assumption \ref{Assumption_framework}-(3) and (4) impose mild technical conditions on the evaluation noise $\{\Xi_{k+1}\}$, and allow for a flexible choice of three-timescale step-sizes $\{\eta_k, \theta_k, \gamma_k\}$. One simple choice is $\eta_{k} = o(1/\log k)$, $\theta_{k}=\eta_{k}(\eta_{k}\log(k))^{-s},$  $\gamma_k = \eta_{k}(\eta_{k}\log(k))^{-3s/2}$ with $s\in(0,\frac{1}{2})$.

In the following, we stipulate the standing assumptions on unbiased compression and contractive compression, and present several typical examples.

\begin{assumpt}[Unbiased compression operator]\label{Assumption_compress0}
For a compression operator $C(\cdot, \cdot): \mathbb{R}^m \times \Omega \to \mathbb{R}^m$, we assume that $C$ is unbiased and satisfies a linear growth bound, that is, there exists a constant $\beta>0$ such that
\begin{equation}
\mathbb{E}_{{\bm \omega}}[C({\bm x}, {\bm \omega})]={\bm x}, \qquad \|C({\bm x}, {\bm \omega})\|\le \beta \|{\bm x}\|,
\quad \forall {\bm x}\in\mathbb{R}^m,\ \text{a.s. in } \Omega.
\end{equation}
\end{assumpt}

\begin{examples}[Random quantization \cite{alistarh2017qsgd}]\label{exp:0}

For any ${\bm x}\in \mathbb{R}^m$ and precision level $ s\in \mathbb{N}_{+}$, the random quantization operator $Q_s$, which is an unbiased compression operator, is defined as
\begin{equation*}
Q_s({\bm x}):=  \|{\bm x}\|_2 \mathrm{sign}({\bm x}) \odot \zeta({\bm x}, s),
\end{equation*} 
where 
\begin{equation*}
\zeta({\bm x}, s) = [\zeta_1(x_1, s), \ldots, \zeta_m(x_m, s)]^{\top}, \quad
\zeta_i(x_i, s)= \begin{cases}
 \frac{l+1}{s}, & \text{ with probability }  \frac{s|x_i|}{\|{\bm x}\|} - l, \\
 \frac{l}{s},& \text{ otherwise}, \\
\end{cases}   
\end{equation*}
for some integer $l$ satisfying $\frac{l}{s}\leq  \frac{|x_i|}{\|{\bm x}\|} < \frac{l+1}{s}$.
\end{examples}

\begin{assumpt}[Contractive compression operator]\label{Assumption_compress}
\mbox{}
For a compression operator $C(\cdot, \cdot): \mathbb{R}^m \times \Omega \rightarrow \mathbb{R}^m$, we assume there exists  $\alpha\in (0,1]$ such that
\begin{equation}\label{con:original}
\mathbb{E}_{{\bm \omega}}[\|C(\bm{x}, {\bm \omega})-\bm{x}\|^2] \leq(1-\alpha)\|\bm{x}\|^2, \quad \forall \bm{x} \in \mathbb{R}^m.
\end{equation}

\end{assumpt}
Assumption \ref{Assumption_compress} is a mild condition, which requires the deviation $\|C(\bm{x}, {\bm \omega})-\bm{x}\|^2$ to be proportional to the squared norm of ${\bm x}$. Indeed, this assumption has been widely adopted in the literature \cite{huang2022lower, stich2018sparsified, richtarik2021ef21}.

\begin{examples}\label{exa:1}
\begin{enumerate}
\item Random-$k$ compression operator: 
\begin{equation*}
\left(C({\bm x}, {\bm \omega})\right)_i:= \left\{\begin{array}{ll}{\bm x}_i, & \text { if } i \in \omega, \\0, & \text { otherwise, }\end{array}\right.
\end{equation*}
where ${\bm x}\in\mathbb{R}^{m}$ and $\omega \subseteq [m]$ is a random subset with $|\omega|=k$.  
This operator satisfies Assumption~\ref{Assumption_compress} with $\alpha = \frac{k}{m}$.

\item Top-$k$ compression operator
\begin{equation*}
\left(C({\bm x}, {\bm \omega})\right)_i:=\left\{\begin{array}{ll}{\bm x}_{i}\mathbbm{1}_{\pi(i)\leq k}, & \text { if } i \leq k, \\0, & \text { otherwise, }\end{array}\right.
\end{equation*}
where ${\bm x}\in\mathbb{R}^{m}$ and $\pi$ is a permutation of $[m]$ such that $\pi(i)$ denotes the position of $|{\bm x}_i|$ in the ordering of $\{|{\bm x}_j|\}_{j=1}^m$.  
This operator satisfies Assumption~\ref{Assumption_compress} with $\alpha = \frac{k}{m}$.

\end{enumerate}
\end{examples}

\subsection{Consensus analysis}\label{consensus}

In this subsection, we investigate the consensus property of the framework \eqref{Eq_Framework}. With the notations ${\bm P}:= \frac{1}{d}{\bm 1_{d}}  {\bm 1}_{d}\tp$ and ${\bm P}_{\perp}:= {\bm I}_{d}- {\bm P}$, ${\bm Z}_{k}\in \mathbb{R}^{m \times d}$ admits the following orthogonal decomposition,
\begin{equation*}
{\bm Z}_{k} = {\bm Z}_{k}{\bm P} + {\bm Z}_{k}{\bm P}_{\perp}. 
\end{equation*} 
Here, ${\bm Z}_{\perp, k} := {\bm Z}_{k} {\bm P}_{\perp}$ measures the dissimilarity of the local variables across all agents at the $k$-th iteration, which is referred to as the consensus-error sequence.

\begin{assumpt}\label{asp:global_stability}
\mbox{}
The sequence $\{\bm Z_{k}\}$ in \eqref{Eq_Framework} is bounded.
\end{assumpt}  

Assumption~\ref{asp:global_stability} is a standard global stability condition in ODE-based analyses of nonsmooth optimization, and has been widely adopted in the literature \cite{benaim2005stochastic, benaim2006dynamics, bolte2021conservative, davis2020stochastic, castera2021inertial}). In practice, it is also considered as a mild condition.


\begin{prop}[Consensus: unbiased compression]\label{prop:consensus_unb}
Suppose Assumption \ref{Assumption_framework}, \ref{Assumption_compress0} and \ref{asp:global_stability} hold. For any sequence $\{{\bm Z}_{k}\}$ generated by \eqref{eq:uncom}, it  satisfies
\begin{equation*}
\lim_{k\to +\infty} \norm{{\bm Z}_{\perp,k}  } = 0. 
\end{equation*}
\end{prop}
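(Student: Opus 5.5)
Multiplying \eqref{eq:uncom} on the right by ${\bm P}_{\perp}$ and using that ${\bm W}$, $\Diag({\bm W})$ and ${\bm P}_\perp$ are symmetric, with ${\bm W}{\bm P}_\perp={\bm P}_\perp{\bm W}$, we obtain the consensus-error recursion
\begin{equation*}
{\bm Z}_{\perp,k+1} = {\bm Z}_{\perp,k}\big((1-\theta_k){\bm I}_d+\theta_k {\bm W}\big) + \theta_k {\bm E}_{k+1}({\bm W}-\Diag({\bm W})){\bm P}_\perp - \eta_k ({\bm H}_k+\Xi_{k+1}){\bm P}_\perp .
\end{equation*}
By Proposition \ref{Prop_preliminary_mixing_matrix}, on the range of ${\bm P}_\perp$ the matrix ${\bm W}$ has spectral norm at most some $\rho<1$. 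Hence $\|{\bm Z}_{\perp}((1-\theta_k){\bm I}+\theta_k{\bm W})\|_{\mathrm F}\le(1-\theta_k(1-\rho))\|{\bm Z}_\perp\|_{\mathrm F}$ for all small $\theta_k$. So the consensus error follows a contractive linear recursion on the fast timescale $\theta_k$, driven by two inputs: a martingale input $\theta_k{\bm E}_{k+1}(\cdot)$ and a bounded drift $\eta_k(\cdot)$. Assumption \ref{asp:global_stability} together with Assumption \ref{Assumption_framework}-(1),(3) makes ${\bm H}_k$ and $\Xi_{k+1}$ bounded, and Assumption \ref{Assumption_compress0} gives $\|{\bm E}_{k+1}\|\le(\beta+1)\|{\bm Z}_k\|$. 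Both inputs are therefore bounded, and ${\bm E}_{k+1}$ is a martingale difference sequence with respect to $\mathcal F_k$.

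In the spirit of the paper, I would rewrite the recursion in ODE-approach form, time-scaled by $\theta_k$:
\begin{equation*}
{\bm Z}_{\perp,k+1} = {\bm Z}_{\perp,k} - \theta_k\Big( {\bm Z}_{\perp,k}({\bm I}_d-{\bm W}) + \tfrac{\eta_k}{\theta_k}{\bm H}_k{\bm P}_\perp - {\bm E}_{k+1}({\bm W}-\Diag({\bm W})){\bm P}_\perp + \tfrac{\eta_k}{\theta_k}\Xi_{k+1}{\bm P}_\perp\Big).
\end{equation*}
The key steps are as follows.
\begin{enumerate}
\item The term $\tfrac{\eta_k}{\theta_k}{\bm H}_k{\bm P}_\perp$ is a vanishing perturbation, since $\eta_k/\theta_k\to0$. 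I absorb it into $\Phi^{\delta_k}$ with $\delta_k\to0$.
\item The noise is $\upsilon_{k+1}=-{\bm E}_{k+1}(\cdots)+\tfrac{\eta_k}{\theta_k}\Xi_{k+1}{\bm P}_\perp$. It is a bounded martingale difference, and I verify Assumption \ref{Assumption_stable_random}-(3) for the weights $\theta_k$.
\item The limiting ODE $\dot{\bm Y}=-{\bm Y}({\bm I}-{\bm W})$ on the range of ${\bm P}_\perp$ has Lyapunov function $\psi({\bm Y})=\|{\bm Y}\|_{\mathrm F}^2$ with stable set $\{0\}$, so the set of Lyapunov values is finite.
\item Lemma \ref{lem_stable_random} then yields ${\bm Z}_{\perp,k}\to0$.
\end{enumerate}

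The main obstacle is step 2, since only boundedness (no moment decay) of the noise is available and the step-sizes are not square-summable. I would first try a concentration argument for bounded martingale differences. Azuma--Hoeffding applied to $\sum_{k=s}^{i}\theta_k\upsilon_{k+1}$ over a window of length $T$ in $\theta$-time gives a tail of the form $\exp\!\big(-c\,\epsilon^2/\sum_{k\in\text{window}}\theta_k^2\big)\le \exp\!\big(-c\,\epsilon^2/(T\max_{k\ge s}\theta_k)\big)$. The condition $\theta_k\log k\to0$ makes this $o(k^{-p})$ for every $p$, and in particular summable over $s$. Borel--Cantelli then gives almost sure uniform smallness of the window sums, after a maximal inequality over each window, handled by a union bound within the window or by Doob's inequality. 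This is exactly the $\log k$ condition imposed in \eqref{eq:ologk}, which suggests it is the intended route.

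Finally, the contraction only holds once $\theta_k\le1$, so I take $k$ large and discard finitely many initial terms; this does not affect the limit.
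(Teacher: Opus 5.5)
Your proposal is correct and follows essentially the same route as the paper: you rewrite the ${\bm Z}_{\perp,k}$ recursion as a perturbed stochastic approximation with step-size $\theta_k$ and $\Phi({\bm Z})={\bm Z}({\bm I}_d-{\bm W})$, absorb $\frac{\eta_k}{\theta_k}{\bm H}_k{\bm P}_\perp$ into $\Phi^{\delta_k}$ with $\delta_k\to 0$, treat the compression and evaluation noise as a bounded martingale difference sequence, and conclude with Lemma \ref{lem_stable_random}. The differences are minor. The paper uses $\psi({\bm Z})=\frac12\|{\bm Z}({\bm I}_d-{\bm W})^{1/2}\|_{\mathrm F}^2$ on the whole space with the consensus subspace as stable set, so its Lyapunov values are $\{0\}$ automatically; your $\|{\bm Y}\|_{\mathrm F}^2$ really does need the restriction to the invariant subspace $\mathrm{range}({\bm P}_\perp)$ that you impose. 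Also, the paper simply cites \cite[Proposition 4.4]{benaim2006dynamics} for the noise condition, whereas you re-derive it with a maximal Azuma--Hoeffding bound plus Borel--Cantelli.
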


\begin{proof}
By straightforward algebraic calculations, we have 
\begin{equation}\label{eq:flatten_unb}
\begin{aligned}
    {\bm Z}_{\perp, k+1} & = [{\bm Z}_{k}((1-\theta_{k}){\bm I}_{d}+\theta_{k}{\bm W})]{\bm P}_{\perp} + \theta_{k}{\bm E}_{k+1}({\bm W}-\Diag({\bm W})){\bm P}_{\perp}- \eta_{k}({\bm H}_{k}+\Xi_{k+1}){\bm P}_{\perp}\\
    & = {\bm Z}_{\perp, k} ((1-\theta_{k}){\bm I}_{d}+\theta_{k}{\bm W}) + \theta_{k}{\bm E}_{k+1}({\bm W}-\Diag({\bm W})){\bm P}_{\perp} - \eta_{k}({\bm H}_{k}+\Xi_{k+1}){\bm P}_{\perp}\\
    & = {\bm Z}_{\perp, k} - \theta_{k} [{\bm Z}_{\perp, k} ({\bm I}_{d} - {\bm W}) + \frac{\eta_{k}}{\theta_{k}}{\bm H}_{k}{\bm P}_{\perp}] + \theta_{k} [{\bm E}_{k+1}({\bm W}-\Diag({\bm W})){\bm P}_{\perp}- \frac{\eta_{k}}{\theta_{k}}\Xi_{k+1}{\bm P}_{\perp}],
\end{aligned}
\end{equation}
where the second equality follows from
$$ {\bm P}_{\perp} ((1-\theta_{k}){\bm I}_{d}+\theta_{k}{\bm W})  =((1-\theta_{k}){\bm I}_{d}+\theta_{k}{\bm W}) {\bm P}_{\perp}.
$$ 

Let $\Phi({\bm Z}):= {\bm Z}({\bm I}_{d}-{\bm W})$,
$\upsilon_{k+1}:= {\bm E}_{k+1}({\bm W}-\Diag({\bm W})){\bm P}_{\perp}- \frac{\eta_{k}}{\theta_{k}}\Xi_{k+1}{\bm P}_{\perp}$ and $\delta_{k}:= \frac{\eta_{k}}{\theta_{k}}\|{\bm H}_{k}{\bm P}_{\perp}\|$. Then \eqref{eq:flatten_unb} can be rephrased as
\begin{equation}\label{eq:unb_rephrase}
{\bm Z}_{\perp, k+1} \in {\bm Z}_{\perp, k} - \theta_{k} (\Phi^{\delta_{k}}({\bm Z}_{\perp, k}) + \upsilon_{k+1}). 
\end{equation}

By Assumption \ref{asp:global_stability} and \ref{Assumption_framework}, $\{{\bm H}_{k}\}$ is uniformly bounded and $\lim_{k\to \infty}\frac{\eta_{k}}{\theta_{k}} = 0$, which implies that $\{\delta_k\}$ diminishes to $0$. Let $\psi({\bm Z}):= \frac{1}{2}\|{\bm Z}({\bm I}_{d}-{\bm W})^{\frac{1}{2}}\|_{\mathrm{F}}^2$ and $\mathcal{B}:= \{{\bm Z} | {\bm Z}({\bm I}_{d}-{\bm W})=0 \}$. For any solution ${\bm Z}(t)$ to the differential inclusion $\frac{\mathrm{d} {\bm Z}}{\mathrm{d} t}\in -\Phi({\bm Z})$, it holds that
\begin{equation*}
\frac{\mathrm{d} \psi({\bm Z}(t))}{\mathrm{d} t} = \left\langle \frac{\partial \psi({\bm Z})}{\partial {\bm Z}}, \frac{\mathrm{d}{\bm Z}(t)}{\mathrm{d}t}\right\rangle = \left\langle {\bm Z}(t)({\bm I}_d -{\bm W}), -{\bm Z}(t)({\bm I}_d -{\bm W}) \right\rangle \leq 0.
\end{equation*}
When ${\bm Z}(0)\notin \mathcal{B}$, it holds that $\frac{\mathrm{d} \psi({\bm Z}(t))}{\mathrm{d} t}|_{t=0} <0$. Hence, $\psi({\bm Z})$ is a Lyapunov function of  $\frac{\mathrm{d} {\bm Z}}{\mathrm{d} t}\in -\Phi({\bm Z})$ and $\mathcal{B}$ is a stable set. Moreover, $\psi({\bm Z})$ is coercive and locally Lipschitz continuous, and $\{\psi({\bm x}): {\bm x}\in \mathcal{B}\} = \{0\}$, which verifies Assumption \ref{Assumption_stable_random}-(2).

In addition, one can check that
\begin{equation*}
\mathbb{E}[\upsilon_{k+1}|\mathcal{F}_k]=
\mathbb{E}\left[{\bm E}_{k+1}({\bm W}-\Diag({\bm W})){\bm P}_{\perp}-\frac{\eta_{k}}{\theta_{k}}\Xi_{k+1}{\bm P}_{\perp}\mid \mathcal{F}_{k}\right]=0.
\end{equation*}
and $\{\upsilon_{k+1}\}$ is uniformly bounded, which follows from the definition of ${\bm E}_{k+1}$ and Assumption \ref{Assumption_framework}-(4). According to \cite[Proposition 4.4]{benaim2006dynamics}, a uniformly bounded martingale difference sequence $\{\upsilon_{k+1}\}$ together with  $\{\theta_{k}\}$ of order $o(1/\log(k))$ is a special case of Assumption \ref{Assumption_stable_random}-(3).

By applying Lemma \ref{lem_stable_random}, we can conclude that the sequence $\{\|{\bm Z}_{\perp, k}({\bm I}_{d}-{\bm W})^{\frac{1}{2}}\|\}$ converges to  $\psi({\bm Z})|_{{\bm Z}\in \mathcal{B}}= 0$. This implies that $\{{\bm Z}_{\perp, k}\}$ converges to the consensus space $\{{\bm Z}| {\bm Z}={\bm z}{\bm 1}_{d}^{\top}, {\bm z}\in \mathbb{R}^{m}\}$. By the definition of $\{{\bm Z}_{\perp, k}\}$,  we further achieve that  $\lim_{k\to +\infty} \norm{{\bm Z}_{\perp,k}  } = 0$, which completes the proof.  

\end{proof}

Proposition \ref{prop:consensus_unb} describe the consensus property of the sub-framework \eqref{eq:uncom}. To reveal the analogous properties in sub-framework \eqref{eq:contractive}, we  introduce some useful Lemmas \ref{lem02}-\ref{lem:noise_to_0}, whose proof is shown in Appendix \ref{sec:appendix-01} and \ref{sec:appendix-02}.

\begin{lem}\label{lem02}
Suppose that positive sequences $\{\gamma_{k}\}$ and $\{\theta_{k}\}$  satisfy 
\begin{equation*}
\lim_{k\to +\infty} \gamma_{k}\log(k)=0, \quad \lim_{k\to +\infty} \frac{\theta_{k}}{\gamma_{k}}=0, \quad \sum_{k=1}^{\infty} \theta_{k} =+\infty.
\end{equation*}
Then, for any $a\in(0,1]$, we have
\begin{equation*}
\lim_{k\to +\infty} \sum_{i=1}^k \left(\prod_{j=i}^{k} (1-a\gamma_j)\right) C_0\theta_{i-1} =0.
\end{equation*}
\end{lem}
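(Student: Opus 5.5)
The statement is a deterministic discrete Gr\"onwall/Toeplitz-type estimate, so I would prove it by splitting the sum into a far-past part and a recent part. Write
\[
S_k := \sum_{i=1}^k \Big(\prod_{j=i}^{k} (1-a\gamma_j)\Big) \theta_{i-1} .
\]
The constant $C_0$ is harmless and can be pulled out. Since $\gamma_k\log(k)\to 0$ forces $\gamma_k\to 0$, I will first discard finitely many indices and assume $a\gamma_j\in(0,1)$ for all relevant $j$. Then every factor lies in $(0,1)$, and $1-a\gamma_j\le e^{-a\gamma_j}$.

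\textbf{Step 1 (the summability of $\gamma$).} Because $\theta_k/\gamma_k\to 0$, we have $\gamma_k\ge\theta_k$ eventually, so $\sum_k\gamma_k\ge\sum_k\theta_k=+\infty$. Hence for any fixed $N$,
\[
\prod_{j=N}^{k}(1-a\gamma_j)\le \exp\Big(-a\sum_{j=N}^k\gamma_j\Big)\to 0 \quad\text{as } k\to\infty .
\]

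\textbf{Step 2 (splitting).} Fix $\varepsilon>0$ and choose $N$ such that $\theta_{i-1}\le \varepsilon\,\gamma_{i-1}$ for all $i> N$. Split $S_k$ into the terms with $i\le N$ and those with $i>N$.

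The first part is a finite sum of bounded numbers $\theta_{i-1}$. Each is multiplied by $\prod_{j=i}^k(1-a\gamma_j)\le\prod_{j=N}^k(1-a\gamma_j)$, which tends to $0$ by Step 1. So the first part vanishes as $k\to\infty$.

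For the second part I use the telescoping identity
\[
\sum_{i=N+1}^{k} a\gamma_{i}\prod_{j=i+1}^{k}(1-a\gamma_j) = 1-\prod_{j=N+1}^k(1-a\gamma_j)\le 1 .
\]

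\textbf{Step 3 (the index shift).} The main obstacle is a mismatch: the summand contains $\theta_{i-1}$ and the product starts at $j=i$, whereas the telescoping identity needs $\gamma_i$ with the product starting at $j=i+1$. I would resolve this in two moves.

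First, bound
\[
\theta_{i-1}\prod_{j=i}^k(1-a\gamma_j)\le \theta_{i-1}\prod_{j=i+1}^k(1-a\gamma_j),
\]
which is valid since the dropped factor lies in $(0,1)$. Equivalently, shift the summation index to $i'=i-1$.

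Second, after the shift the summand reads $\theta_{i'}\prod_{j=i'+1}^k(1-a\gamma_j)\le\varepsilon\gamma_{i'}\prod_{j=i'+1}^k(1-a\gamma_j)$. The product now starts exactly where the telescoping identity requires, so the second part is at most
\[
\frac{\varepsilon}{a}\sum_{i'=N}^{k-1} a\gamma_{i'}\prod_{j=i'+1}^k(1-a\gamma_j)\le\frac{\varepsilon}{a}.
\]
This route avoids needing any regularity of $\gamma_{i-1}/\gamma_i$.

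\textbf{Step 4 (conclusion).} Combining Steps 2 and 3 gives $\limsup_k S_k\le \varepsilon/a$. Since $\varepsilon>0$ is arbitrary, $S_k\to 0$.

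Only $\gamma_k\to 0$, $\theta_k/\gamma_k\to 0$ and $\sum_k\theta_k=\infty$ are used. The $\log(k)$ rate in the hypotheses is not needed here and is presumably included for the later stochastic arguments.
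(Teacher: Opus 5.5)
Your proof is correct and follows essentially the same route as the paper. Both arguments split the sum at an index beyond which $\theta_{i-1}\le\varepsilon\gamma_{i-1}$, bound the tail by $\varepsilon/a$ using the telescoping identity $\sum_{i=N}^{k} a\gamma_{i-1}\prod_{j=i}^{k}(1-a\gamma_j)\le 1$, and kill the finitely many early terms with $\prod_{j\ge N}^{k}(1-a\gamma_j)\to 0$ (the paper bounds that early block through the same identity rather than directly, which is immaterial). The ``mismatch'' in your Step 3 is not real: the reindexing $i'=i-1$ alone already puts the summand exactly in telescoping form, as the paper's use of $\gamma_{i-1}\prod_{j=i}^{k}$ shows. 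So the extra factor-dropping bound is unnecessary, and it is not ``equivalent'' to the shift.
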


\begin{lem}\label{lem01}
Let $\{\upsilon_{k}\}$ be a scalar martingale difference sequence with respect to the filtration $\{\mathcal{F}_k\}$, which is uniformly bounded. Let $a \in (0,1]$, and $\{\gamma_k\}$ be a sequence satisfying
\begin{equation*}
\lim_{k\to +\infty} \gamma_{k}\log(k)=0, \qquad \sum_{k=1}^{\infty} \gamma_{k} =+\infty.
\end{equation*}
Then it follows that
\begin{equation*}
\lim_{k \to +\infty} \left(\gamma_{k}\upsilon_{k+1} + \sum_{i=1}^{k-1} \gamma_i \left( \prod_{j=i+1}^{k} (1 - a\gamma_j) \right) \upsilon_{i+1} \right)=0. 
\end{equation*}
\end{lem}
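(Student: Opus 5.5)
The plan is to recast the weighted sum as a one-dimensional stochastic approximation recursion and then apply Lemma~\ref{lem_stable_random}, as was done in the proof of Proposition~\ref{prop:consensus_unb}. Set
\begin{equation*}
S_k := \gamma_{k}\upsilon_{k+1} + \sum_{i=1}^{k-1} \gamma_i \Big( \prod_{j=i+1}^{k} (1 - a\gamma_j) \Big) \upsilon_{i+1}.
\end{equation*}
Multiplying the expression for $S_{k-1}$ by $(1-a\gamma_k)$ extends every product in it up to $j=k$. Adding $\gamma_k\upsilon_{k+1}$ then gives the recursion
\begin{equation*}
S_k = (1-a\gamma_k) S_{k-1} + \gamma_k \upsilon_{k+1} = S_{k-1} - \gamma_k\big( a S_{k-1} + (-\upsilon_{k+1})\big).
\end{equation*}
This is exactly the scheme \eqref{Eq_stable_random} with step-sizes $\{\gamma_k\}$, linear map $\Phi(s)=as$, $\delta_k\equiv 0$, and noise $-\upsilon_{k+1}$.

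Next I verify the hypotheses of Assumption~\ref{Assumption_stable_random}, except boundedness.
\begin{itemize}
\item Lyapunov function: take $\psi(s)=\frac12 s^2$ with stable set $\mathcal{B}=\{0\}$. Along any trajectory of $\dot s=-as$ we have $\frac{\mathrm d}{\mathrm dt}\psi=-a s^2<0$ whenever $s\neq 0$, and $\psi(\mathcal{B})=\{0\}$ is finite.
\item Noise condition: $\{-\upsilon_{k+1}\}$ is a uniformly bounded martingale difference sequence and $\gamma_k=o(1/\log k)$. As in the proof of Proposition~\ref{prop:consensus_unb}, \cite[Proposition 4.4]{benaim2006dynamics} then yields Assumption~\ref{Assumption_stable_random}-(3) almost surely. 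The requirement $\sum\gamma_k=+\infty$ is given.
\end{itemize}
Once these hold, Lemma~\ref{lem_stable_random} gives $\mathrm{dist}(S_k,\{0\})\to 0$, which is the claim.

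The main obstacle is item (1) of Assumption~\ref{Assumption_stable_random}, the uniform boundedness of $\{S_k\}$, which is not given a priori. I would prove it directly from the noise condition, which in fact yields convergence without invoking Lemma~\ref{lem_stable_random} at all.
\begin{enumerate}
\item Fix $T>0$ and let $\Gamma_k:=\sum_{i<k}\gamma_i$. Cut the index set into consecutive windows $[n_\ell, n_{\ell+1})$, each of $\gamma$-length approximately $T$; this is possible because $\gamma_k\to0$.
\item Unroll the recursion over one window:
\begin{equation*}
S_{n_{\ell+1}-1} = \Big(\prod_{j=n_\ell}^{n_{\ell+1}-1}(1-a\gamma_j)\Big) S_{n_\ell-1} + \sum_{i=n_\ell}^{n_{\ell+1}-1} \gamma_i \Big(\prod_{j=i+1}^{n_{\ell+1}-1}(1-a\gamma_j)\Big)\upsilon_{i+1}.
\end{equation*}
\item The leading product is at most $e^{-a(T-o(1))}\le e^{-aT/2}$ for large $\ell$.
\item For the second sum, apply Abel summation with partial sums $\sum_{i=n_\ell}^{r}\gamma_i\upsilon_{i+1}$. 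The weights $\prod_{j>i}(1-a\gamma_j)$ lie in $(0,1]$ and are monotone in $i$, so their total variation is at most $1$. The sum is therefore bounded by $2\sup_{r}\big|\sum_{i=n_\ell}^{r}\gamma_i\upsilon_{i+1}\big|=:\epsilon_\ell$, and $\epsilon_\ell\to0$ almost surely by the noise condition.
\item This gives $|S_{n_{\ell+1}-1}|\le e^{-aT/2}|S_{n_\ell-1}|+\epsilon_\ell$ with $\epsilon_\ell\to0$, hence $\limsup_\ell |S_{n_\ell-1}|=0$.
\item Within a window, the same Abel bound shows $|S_k|\le |S_{n_\ell-1}|+\epsilon_\ell$, because the homogeneous part only contracts. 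Hence $S_k\to0$ along the full sequence almost surely.
\end{enumerate}
This argument also covers the boundedness requirement, so either route completes the proof.
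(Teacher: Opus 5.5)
Your proof is correct. The direct window argument at the end proves the lemma by itself; the detour through Lemma~\ref{lem_stable_random} adds nothing, since the boundedness it requires comes from that same computation.

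The skeleton matches the paper's. Both proofs cut time into windows of $\gamma$-length $T$, contract across each window by $\prod_j(1-a\gamma_j)\le e^{-aT}$, and show that the windowed noise vanishes almost surely.

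The difference is in how the noise is controlled.
\begin{itemize}
\item The paper proves a maximal concentration bound directly for the \emph{weighted} sums $\sum_k \rho_{N,k}\upsilon_{k+1}$. It uses an exponential supermartingale, Doob's inequality and $\sum_k\rho_{N,k}^2\le T\gamma_{k'}$, then Borel--Cantelli with $\gamma_k\log k\to 0$. Intermediate indices are handled by a backward estimate based on $1-a\gamma_k\ge e^{-2a\gamma_k}$, at the cost of a factor $e^{2aT}$.
\item You use summation by parts instead. The deterministic weights $\prod_{j>i}(1-a\gamma_j)$ are eventually monotone in $i$ and lie in $(0,1]$. Hence each weighted window sum is at most twice the maximal \emph{unweighted} partial sum $\sup_r\bigl|\sum_{i=n_\ell}^{r}\gamma_i\upsilon_{i+1}\bigr|$. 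That quantity vanishes almost surely by the standard asymptotic-rate-of-change condition from \cite[Proposition 4.4]{benaim2006dynamics}, the same result the paper already invokes in Proposition~\ref{prop:consensus_unb}. Intermediate indices then follow from a forward bound, because the homogeneous part only contracts.
\end{itemize}

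Your route is shorter and modular. It applies verbatim to any noise sequence for which the unweighted condition of Assumption~\ref{Assumption_stable_random}-(3) holds along $\{\gamma_k\}$, for example $L^q$-bounded martingale differences with $\sum_k\gamma_k^{1+q/2}<\infty$, not only uniformly bounded ones. The paper's route is self-contained: it proves the needed concentration from scratch rather than quoting it.
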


Utilizing above tools, the following lemma shows that the compression error ${\bm E}_{k}$ is bounded and converges to zero almost surely.

\begin{lem}\label{lem:noise_to_0}
Suppose Assumption \ref{Assumption_framework}, \ref{Assumption_compress} and \ref{asp:global_stability} hold. For any $\{{\bm Z}_{k}\}$ generated by \eqref{eq:contractive}, it follows that almost surely, 

\begin{equation*}
\lim\limits_{k\to +\infty} \|{\bm E}_{k}\| =0. 
\end{equation*}
\end{lem}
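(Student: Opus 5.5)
We plan to track the error-compensation residual ${\bm D}_k := {\bm Z}_k - \hat{\bm Z}_k$, since ${\bm E}_{k+1} = \hat{\bm Z}_{k+1} - {\bm Z}_k = -{\bm D}_k + \gamma_k C({\bm D}_k,{\bm \omega}_{k+1})$. Write $C({\bm D}_k,{\bm \omega}_{k+1}) = {\bm D}_k + {\bm N}_{k+1}$, where ${\bm N}_{k+1}$ is the compression deviation. Then
\begin{equation*}
{\bm E}_{k+1} = -(1-\gamma_k){\bm D}_k + \gamma_k {\bm N}_{k+1}.
\end{equation*}
Because $\gamma_k\to 0$, it suffices to show $\|{\bm D}_k\|\to 0$ almost surely and that $\gamma_k\|{\bm N}_{k+1}\|\to 0$.

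\textbf{Step 1: the recursion for ${\bm D}_k$.} From \eqref{eq:contractive} we have
\begin{equation*}
{\bm D}_{k+1} = {\bm D}_k - \gamma_k C({\bm D}_k,{\bm \omega}_{k+1}) + ({\bm Z}_{k+1}-{\bm Z}_k),
\end{equation*}
with
\begin{equation*}
{\bm Z}_{k+1}-{\bm Z}_k = -\theta_k{\bm Z}_k({\bm I}_d-{\bm W}) + \theta_k{\bm E}_{k+1}({\bm W}-{\bm I}_d) - \eta_k({\bm H}_k+\Xi_{k+1}).
\end{equation*}
Substituting ${\bm E}_{k+1}$ gives
\begin{equation*}
{\bm D}_{k+1} = {\bm D}_k\bigl[(1-\gamma_k){\bm I}_d - \theta_k(1-\gamma_k)({\bm W}-{\bm I}_d)\bigr] - \gamma_k{\bm N}_{k+1}({\bm I}_d-\theta_k({\bm W}-{\bm I}_d)) + {\bm R}_k,
\end{equation*}
where $\|{\bm R}_k\|\le C_0\theta_k$ by Assumption \ref{asp:global_stability}, the boundedness of ${\bm H}_k$ and $\Xi_{k+1}$ in Assumption \ref{Assumption_framework}, and $\eta_k\le\theta_k$ eventually. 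Since $\theta_k/\gamma_k\to0$, the linear factor has norm at most $1-a\gamma_k$ for some $a\in(0,1]$ and all large $k$.

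\textbf{Step 2: removing the compression noise.} The deviation ${\bm N}_{k+1}$ is not mean zero, but Assumption \ref{Assumption_compress} gives $\mathbb{E}\|{\bm N}_{k+1}\|^2\le(1-\alpha)\|{\bm D}_k\|^2$. We therefore work with squared norms. Conditioning on $\mathcal{F}_k$ and using Young's inequality,
\begin{equation*}
\mathbb{E}[\|{\bm D}_{k+1}\|_{\mathrm F}^2|\mathcal{F}_k] \le (1-\gamma_k)^2(1+O(\theta_k))\|{\bm D}_k\|^2 + 2\gamma_k(1-\gamma_k)|\langle {\bm D}_k,\mathbb{E}{\bm N}_{k+1}\rangle| + \gamma_k^2(1-\alpha)\|{\bm D}_k\|^2 + O(\theta_k)\|{\bm D}_k\| + O(\theta_k^2).
\end{equation*}
The cross term is controlled by $\|\mathbb{E}{\bm N}_{k+1}\|\le\sqrt{1-\alpha}\|{\bm D}_k\|$. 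This yields a contraction $1-a\gamma_k$ with $a \approx 1-\sqrt{1-\alpha}>0$.

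Rather than take expectations, we write $\|{\bm D}_{k+1}\|^2$ as its conditional expectation plus a martingale difference $\upsilon_{k+1}$, and unroll. The drift part is controlled by Lemma \ref{lem02}, applied to the $O(\theta_k)$ forcing terms. The martingale part is controlled by Lemma \ref{lem01}, which needs $\gamma_k\log k\to0$ from \eqref{eq:gammak}. The martingale increments carry a factor $\gamma_k$, because the randomness enters only through $\gamma_k{\bm N}_{k+1}$, so they can be written as $\gamma_k\upsilon_{k+1}$ with $\upsilon_{k+1}$ uniformly bounded.

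\textbf{Main obstacle.} Bounding $\upsilon_{k+1}$ requires a priori boundedness of ${\bm D}_k$, hence of $\hat{\bm Z}_k$. This does not follow directly from the boundedness of ${\bm Z}_k$, and a contractive $C$ need not be almost-surely bounded. I would first try a stopping-time or localization argument. One can also use $\|C({\bm x},{\bm \omega})\|\le c\|{\bm x}\|$ almost surely, which holds for the Random-$k$ and Top-$k$ operators in Example \ref{exa:1}, together with the contraction in expectation. Combining these should give $\limsup\|{\bm D}_k\|<\infty$, and then the lemmas yield $\|{\bm D}_k\|\to0$.

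Finally, $\gamma_k{\bm N}_{k+1}\to0$ follows from $\|{\bm N}_{k+1}\|\lesssim\|{\bm D}_k\|$, which gives $\|{\bm E}_{k+1}\|\to0$ almost surely.
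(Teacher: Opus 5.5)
\textbf{Where the proposal stands.} Your route is the paper's in slightly different coordinates. ${\bm D}_k={\bm Z}_k-\hat{\bm Z}_k$ differs from $-{\bm E}_k$ only by $O(\theta_{k-1})$, and your ${\bm N}_{k+1}$ is the paper's ${\bm S}_{k+1}$. The contraction $1-a\gamma_k$ with $a=1-\sqrt{1-\alpha}$ comes from the same Jensen step. The argument also closes the same way, with Lemma~\ref{lem02} for the $\theta$-forcing and Lemma~\ref{lem01} for the martingale part. Squaring buys nothing: the increments then involve $\|{\bm N}_{k+1}\|^2$, and you get an $O(\theta_k)\|{\bm D}_k\|$ term that again needs a bound on ${\bm D}_k$. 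Also, $\Xi_{k+1}$ enters the martingale part, though harmlessly, since its factor is $\eta_k\le\gamma_k$.

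\textbf{The gap.} It is the step you flag and leave open. Lemma~\ref{lem01} needs uniformly bounded increments, and your last step needs $\|{\bm N}_{k+1}\|\lesssim\|{\bm D}_k\|$ pathwise. Neither follows from Assumption~\ref{Assumption_compress}, which only controls $\mathbb{E}_{\bm \omega}\|C({\bm x},{\bm \omega})-{\bm x}\|^2$. Your sketch does not supply them either:
\begin{itemize}
\item Even with an a.s. bound $\|C({\bm x},{\bm \omega})\|\le c\|{\bm x}\|$, the pathwise recursion is expansive, $\|{\bm D}_{k+1}\|\le(1+c\gamma_k+O(\theta_k))\|{\bm D}_k\|+O(\theta_k)$. ``Combining'' this with contraction in expectation is precisely the missing argument.
\item Localizing at a level $R$ only bounds the chance that a single excursion reaches $R$. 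Excursions can recur infinitely often.
\item That a.s. bound is an extra hypothesis, so at best you would prove a modified lemma, and you must say so.
\end{itemize}

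\textbf{How the paper handles this step, and why it does not quite work.} The paper uses that the norm recursion is closed: $\mathbb{E}[\|{\bm E}_{k+1}\|\mid\mathcal{F}_k]\le(1-\mu_k\gamma_k)\|{\bm E}_k\|+C_0\theta_{k-1}$, where $C_0$ depends only on the bounded ${\bm Z}_{k-1}$, ${\bm H}_{k-1}$, $\Xi_k$. It then invokes Lemma~\ref{lem02} and Robbins--Siegmund to conclude that $\lim_k\|{\bm E}_k\|$ exists a.s. Your suspicion about this step is justified, for two reasons. First, Robbins--Siegmund needs summable forcing, but here $\sum_k\theta_k=\infty$, because $\eta_k/\theta_k\to 0$ and $\sum_k\eta_k=\infty$. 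Second, the paper's subsequent claim that ${\bm S}_k$ is a.s. bounded, which is what makes $\upsilon_k$ uniformly bounded for Lemma~\ref{lem01}, does not follow from a second-moment bound.

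\textbf{A repair under the extra hypothesis.} Assume $\|C({\bm x},{\bm \omega})\|\le c\|{\bm x}\|$ a.s., the analogue of what Assumption~\ref{Assumption_compress0} imposes on unbiased compressors. Then normalize instead of localize.
\begin{itemize}
\item Write $\|(1-\gamma_k){\bm D}_k-\gamma_k{\bm N}_{k+1}\|=(1-\gamma_k\chi_{k+1})\|{\bm D}_k\|$, where $\chi_{k+1}\in[-c,c+2]$ and $\mathbb{E}[\chi_{k+1}\mid\mathcal{F}_k]\ge a$.
\item The products in your unrolled recursion are then at most $\exp\bigl(-a\sum_j\gamma_j+2\sum_j\theta_j+\sum_j\gamma_j(\mathbb{E}[\chi_{j+1}\mid\mathcal{F}_j]-\chi_{j+1})\bigr)$.
\item The martingale in the exponent has bounded increments. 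So the $\gamma_k\log k\to 0$ concentration argument behind Lemma~\ref{lem01} makes it eventually at most $\delta$ on every $\gamma$-time window of length $T$. That is negligible against $-a\sum_j\gamma_j$, and it needs no a priori bound on ${\bm D}_k$.
\item Lemma~\ref{lem02} then disposes of the $O(\theta_k)$ forcing, and $\|{\bm E}_{k+1}\|\le(1+c\gamma_k)\|{\bm D}_k\|\to 0$.
\end{itemize}
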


\begin{proof}
We first observe that ${\bm E}_{k+1}$ in \eqref{eq:contractive} can be rewritten as
\begin{equation*}
{\bm E}_{k+1} = \gamma_{k}(C({\bm Z}_{k}- \hat{\bm Z}_{k}, {\bm \omega}_{k+1})-({\bm Z}_k -\hat{\bm Z}_k))-(1-\gamma_{k})({\bm Z}_k -\hat{\bm Z}_k).
\end{equation*}
Let ${\bm S}_{k+1}:=C({\bm Z}_{k}- \hat{\bm Z}_{k}, {\bm \omega}_{k+1})-({\bm Z}_k -\hat{\bm Z}_k)$, and  $\mathcal{F}_{k}:=\sigma(\{ {\bm \omega}_{j}, {\bm Z}_j, \Xi_{j}| j\leq k\})$ denote the $\sigma$-algebra at the $k$-th iteration. Straightforward calculations yield
\begin{equation*}
\begin{aligned}
& \mathbb{E}(\|{\bm E}_{k+1}\| | \mathcal{F}_{k}) \\
 \leq & \gamma_k \mathbb{E}[\|{\bm S}_{k+1} \|| \mathcal{F}_{k}] +(1-\gamma_k)\|{\bm Z}_{k}-\hat{\bm Z}_{k}\| \\
 \leq & \gamma_k \sqrt{1-\alpha}\|{\bm Z}_{k}-\hat{\bm Z}_{k}\| + (1-\gamma_k)\|{\bm Z}_{k}-\hat{\bm Z}_{k}\|\\
 \leq & (1-(1-\sqrt{1-\alpha})\gamma_k)\|{\bm E}_k(\theta_{k-1} {\bm W} - (\theta_{k-1}+1) {\bm I}_d) + \theta_{k-1}({\bm Z}_{k-1} ({\bm W}- {\bm I}_d) - \frac{\eta_{k-1}}{\theta_{k-1}}({\bm H}_{k-1}+\Xi_{k}))\| \\
 \leq & (1-(1-\sqrt{1-\alpha})\gamma_k)(1+\theta_{k-1} -\lambda_d \theta_{k-1})\|{\bm E}_k\| +(1-(1-\sqrt{1-\alpha})\gamma_k)C_0 \theta_{k-1}
\end{aligned}
\end{equation*}
where the second inequality follows from Jensen's inequality and the definition of a contractive compression operator, $C_0:= \sup_{k\geq 0}\| {\bm Z}_{k-1}({\bm W}-{\bm I}_d)-\frac{\eta_{k-1}}{\theta_{k-1}} ({\bm H}_{k-1}+\Xi_{k})\|< +\infty$ as Assumption \ref{asp:global_stability} holds, and $\lambda_d$ is the smallest eigenvalue of ${\bm W}$. In addition, one has
\begin{equation*}
(1-(1-\sqrt{1-\alpha})\gamma_k)(1+\theta_{k-1} -\lambda_d \theta_{k-1}) = 1 -\mu_k \gamma_k   
\end{equation*}
where $\mu_k := (1-\sqrt{1-\alpha})- \frac{\theta_{k-1}}{\gamma_k}(1-\lambda_d) -(1-\sqrt{1-\alpha})(1-\lambda_d)\theta_{k-1}>0$ as $k$ is sufficiently large.

Combining Lemma \ref{lem02} and \cite[Theorem 1]{ROBBINS1971233}, we know $\lim_{k\to \infty}\|{\bm E}_{k}\|$ exists and hence $\{{\bm E}_{k}\}$  is bounded almost surely. 
This result also yields that ${\bm Z}_{k}$ and ${\bm S}_{k}$ is bounded almost surely. By carrying out the calculation further, we obtain 
\begin{equation}\label{eq:incurr}
\begin{aligned}
\|{\bm E}_{k+1}\| & \leq  \gamma_k \mathbb{E}[\|{\bm S}_{k+1} \|| \mathcal{F}_{k}] + \gamma_k(\|{\bm S}_{k+1}\|- \mathbb{E}[\|{\bm S}_{k+1}\|| \mathcal{F}_{k}]) +(1-\gamma_k)\|{\bm Z}_{k}-\hat{\bm Z}_{k}\| \\
& \leq (1-(1-\sqrt{1-\alpha})\gamma_k)\|{\bm E}_k\| + (1-(1-\sqrt{1-\alpha})\gamma_k)C_1 \theta_{k-1} + \gamma_{k}(\|{\bm S}_{k+1}\|- \mathbb{E}[\|{\bm S}_{k+1}\|| \mathcal{F}_{k}])\\
\end{aligned}
\end{equation}
where $C_1:= \sup_{k\geq 0}\| \hat{\bm Z}_{k}({\bm W}-{\bm I}_d)-\frac{\eta_{k-1}}{\theta_{k-1}} ({\bm H}_{k-1}+\Xi_{k})\|< +\infty$. With the notations $\upsilon_{k}:= \|{\bm S}_{k}\|- \mathbb{E}[\|{\bm S}_{k}\|| \mathcal{F}_{k-1}]$ and $a:=1-\sqrt{1-\alpha}\in(0,1)$, one can recursively iterate \eqref{eq:incurr} to obtain 
\begin{equation*}\label{rewrite_Ek}
\|{\bm E}_{k+1}\| \leq \prod_{i=1}^{k}(1-a \gamma_i)\|{\bm E}_1\| + \sum_{i=1}^k \left(\prod_{j=i}^{k} (1-a\gamma_j)\right) C_1 \theta_{i-1} + \sum_{i=1}^{k-1} \left(\prod_{j=i+1}^{k} (1-a \gamma_j)\right) \gamma_i \upsilon_{i+1} + \gamma_k \upsilon_{k+1}.
\end{equation*}
Since $\{\upsilon_{k}\}$ is a uniformly bounded martingale difference sequence under Assumption \ref{asp:global_stability}, and $(\{\gamma_{k}\}, \{\theta_{k}\})$ are two time-scale sequences of order $o(1/\log(k))$, we can apply Lemmas \ref{lem01} and \ref{lem02} to derive

\begin{equation*}
\lim_{k \to +\infty} \gamma_{k}\upsilon_{k+1} + \sum_{i=1}^{k-1} \gamma_i \left( \prod_{j=i+1}^{k} (1 - a\gamma_j) \right) \upsilon_{i+1}=0, 
\end{equation*}
and
\begin{equation*}
\lim_{k\to +\infty} \sum_{i=1}^k \left(\prod_{j=i}^{k} (1-a\gamma_j)\right) \theta_{i-1} =0.
\end{equation*}
Together, these facts yield that 
\begin{equation*}
\lim_{k\to +\infty}\|{\bm E}_{k}\| =0.
\end{equation*}
This completes the proof. 
\end{proof}

\begin{prop}[Consensus: contractive compression]\label{prop:consensus_con}
Suppose Assumption \ref{Assumption_framework}, \ref{Assumption_compress} and \ref{asp:global_stability} hold. Then the sequence $\{{\bm Z}_{k}\}$ generated by \eqref{eq:contractive} satisfies 
\begin{equation*}
\lim_{k\to +\infty} \norm{{\bm Z}_{\perp,k}} = 0.
\end{equation*}
\end{prop}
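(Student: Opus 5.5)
We follow the template of the proof of Proposition \ref{prop:consensus_unb}. The one new ingredient is that in \eqref{eq:contractive} the compression error ${\bm E}_{k+1}$ is no longer a martingale difference. Instead, by Lemma \ref{lem:noise_to_0}, it vanishes almost surely, so it can be absorbed into the diminishing perturbation $\delta_k$ of the set-valued field rather than into the noise term.

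First we right-multiply the update in \eqref{eq:contractive} by ${\bm P}_{\perp}$. Using ${\bm W}{\bm P}_{\perp}={\bm P}_{\perp}{\bm W}$ and ${\bm P}_{\perp}^2={\bm P}_{\perp}$, this gives
\begin{equation*}
{\bm Z}_{\perp,k+1} = {\bm Z}_{\perp,k} - \theta_k\Big[{\bm Z}_{\perp,k}({\bm I}_d-{\bm W}) - {\bm E}_{k+1}({\bm W}-{\bm I}_d){\bm P}_{\perp} + \tfrac{\eta_k}{\theta_k}{\bm H}_k{\bm P}_{\perp}\Big] - \theta_k \tfrac{\eta_k}{\theta_k}\Xi_{k+1}{\bm P}_{\perp}.
\end{equation*}
We set $\Phi({\bm Z}):={\bm Z}({\bm I}_d-{\bm W})$ and $\upsilon_{k+1}:=\frac{\eta_k}{\theta_k}\Xi_{k+1}{\bm P}_{\perp}$, and define
\begin{equation*}
\delta_k := \tfrac{\eta_k}{\theta_k}\|{\bm H}_k{\bm P}_{\perp}\| + \|{\bm E}_{k+1}({\bm W}-{\bm I}_d){\bm P}_{\perp}\|.
\end{equation*}
With these choices, ${\bm Z}_{\perp,k+1}\in {\bm Z}_{\perp,k}-\theta_k(\Phi^{\delta_k}({\bm Z}_{\perp,k})+\upsilon_{k+1})$, which is exactly the form \eqref{Eq_stable_random} with step-sizes $\theta_k$.

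Next we verify Assumption \ref{Assumption_stable_random}, working on the probability-one event where Lemma \ref{lem:noise_to_0} holds.
\begin{enumerate}
\item \emph{Boundedness and vanishing perturbation.} Boundedness of $\{{\bm Z}_{\perp,k}\}$ follows from Assumption \ref{asp:global_stability}. We have $\delta_k\to 0$ because $\{{\bm H}_k\}$ is bounded, $\eta_k/\theta_k\to0$, and $\|{\bm E}_{k+1}\|\to 0$ a.s.
\item \emph{Lyapunov function.} This is verified verbatim as in Proposition \ref{prop:consensus_unb}, with $\psi({\bm Z})=\frac12\|{\bm Z}({\bm I}_d-{\bm W})^{1/2}\|_{\mathrm F}^2$ and stable set $\mathcal B=\{{\bm Z}:{\bm Z}({\bm I}_d-{\bm W})=0\}$, on which $\psi\equiv 0$.
\item \emph{Noise condition.} The sequence $\{\upsilon_{k+1}\}$ is a martingale difference sequence, uniformly bounded by Assumption \ref{Assumption_framework}-(3), since $\eta_k/\theta_k$ is bounded. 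Together with $\theta_k\log k\to0$ from \eqref{eq:ologk}, \cite[Proposition 4.4]{benaim2006dynamics} gives Assumption \ref{Assumption_stable_random}-(3) almost surely. Note that $\sum\theta_k=\infty$ follows from $\sum\eta_k=\infty$ and $\eta_k/\theta_k\to0$.
\end{enumerate}

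Finally, Lemma \ref{lem_stable_random} yields $\psi({\bm Z}_{\perp,k})\to0$. By Proposition \ref{Prop_preliminary_mixing_matrix}, ${\bm I}_d-{\bm W}$ is positive definite on the range of ${\bm P}_{\perp}$, so $\psi({\bm Z}_{\perp,k})\ge \frac{1-\lambda_2}{2}\|{\bm Z}_{\perp,k}\|_{\mathrm F}^2$, where $\lambda_2<1$ is the second largest eigenvalue of ${\bm W}$. Hence $\|{\bm Z}_{\perp,k}\|\to0$.

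The main subtlety is that $\delta_k$ is random, and Lemma \ref{lem_stable_random} is applied pathwise. We intersect the full-measure events from Lemma \ref{lem:noise_to_0} and from the noise condition, and apply the deterministic Lemma \ref{lem_stable_random} on each sample path in that intersection. Since that lemma only requires $\delta_k\to0$ along the given path, this pathwise application is legitimate.
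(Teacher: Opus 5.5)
Your proof is correct and follows essentially the same route as the paper: the same perturbed inclusion with $\Phi({\bm Z})={\bm Z}({\bm I}_d-{\bm W})$, the compression error ${\bm E}_{k+1}$ absorbed into $\delta_k$ via Lemma~\ref{lem:noise_to_0}, and the same Lyapunov function $\psi$ and stable set $\mathcal{B}$. The only difference is how you handle $\frac{\eta_k}{\theta_k}\Xi_{k+1}{\bm P}_{\perp}$: the paper puts it into $\delta_k$ as well, since it vanishes pathwise because $\Xi_{k+1}$ is bounded and $\eta_k/\theta_k\to 0$, and then takes $\upsilon_{k+1}=0$, so your martingale-noise verification of Assumption~\ref{Assumption_stable_random}-(3) is valid but unnecessary.
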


\begin{proof}

To begin with, it is straightforward to check that
\begin{equation}\label{eq:flatten}
\begin{aligned}
    {\bm Z}_{\perp, k+1} & = [{\bm Z}_{k}((1-\theta_{k}){\bm I}_{d}+\theta_{k}{\bm W})]{\bm P}_{\perp} + \theta_{k}{\bm E}_{k+1}({\bm W}-{\bm I}_{d}){\bm P}_{\perp}- \eta_{k}({\bm H}_{k}+\Xi_{k+1}){\bm P}_{\perp}\\
    & = {\bm Z}_{\perp, k} ((1-\theta_{k}){\bm I}_{d}+\theta_{k}{\bm W}) + \theta_{k}{\bm E}_{k+1}({\bm W}-{\bm I}_{d}){\bm P}_{\perp} - \eta_{k}({\bm H}_{k}+\Xi_{k+1}){\bm P}_{\perp}\\
    & = {\bm Z}_{\perp, k} - \theta_{k} [{\bm Z}_{\perp, k} ({\bm I}_{d} - {\bm W}) + \frac{\eta_{k}}{\theta_{k}}({\bm H}_{k}+\Xi_{k+1}){\bm P}_{\perp} + {\bm E}_{k+1}({\bm W}-{\bm I}_{d}){\bm P}_{\perp}]. 
\end{aligned}
\end{equation}
Let $\Phi({\bm Z}):= {\bm Z}({\bm I}_{d}-{\bm W})$, $\upsilon_{k}:= 0$ and $\delta_{k}:= \frac{\eta_{k}}{\theta_{k}}(\|{\bm H}_{k}\|+\|\Xi_{k+1}\|) + 2\|{\bm E}_{k+1}\|$. Lemma \ref{lem:noise_to_0} together with Assumption \ref{Assumption_framework} implies $\lim_{k\to \infty}\delta_k =0$.  Then \eqref{eq:flatten} can be rewritten as
\begin{equation}\label{con_reformulate}
{\bm Z}_{\perp, k+1} \in {\bm Z}_{\perp, k} - \theta_{k} (\Phi^{\delta_{k}}({\bm Z}_{\perp, k}) + \upsilon_{k+1}), 
\end{equation}
Let $\psi({\bm Z}):= \frac{1}{2}\|{\bm Z}({\bm I}_{d}-{\bm W})^{\frac{1}{2}}\|_{\mathrm{F}}^2$, and $\mathcal{B}:= \{{\bm Z} | {\bm Z}({\bm I}_{d}-{\bm W})=0 \}$.
Analogously to the proof of Proposition \ref{prop:consensus_unb}, we can verify each condition in Assumption \ref{Assumption_stable_random} holds and apply Lemma \ref{lem_stable_random} to obtain desired result.

\end{proof}




\subsection{Global convergence and main results}\label{glo_con}
According to consensus properties shown in  Propositions  \ref{prop:consensus_unb} and \ref{prop:consensus_con}, we can deduce that the cluster points of the sequence $\{{\bm Z}_k\}$ generated by \eqref{eq:uncom} and \eqref{eq:contractive}  coincide with those of the sequence $\{ {\bm Z}_k \frac{{\bm 1}_d {\bm 1}_d\tp}{d}\}$. As a result, we proceed to analyze the convergence properties of $\{{\bm Z}_k \frac{{\bm 1}_d}{d}\}$. Proposition \ref{prop:ave_scheme} describes the relationship between $\frac{1}{d}{\bm Z}_{k}{\bm 1}_d$ and the averaged updated direction $\frac{1}{d} {\bm H}_{k}{\bm 1}_d$ via the set-valued mapping $\Phi$.

\begin{prop}
    \label{prop:ave_scheme}
    Suppose Assumption \ref{Assumption_framework} and \ref{asp:global_stability} hold. For any sequence $\{{\bm Z}_k\}$ generated by the sub-framework \eqref{eq:uncom} or \eqref{eq:contractive}, there exists a nonnegative diminishing sequence $\{\tilde{\epsilon}_k\}$ such that 
    \begin{equation}\label{eq:ave_scheme}    
   \frac{1}{d} {\bm H}_{k}{\bm 1}_d \in  \Phi^{\tilde{\epsilon}_k}(\frac{1}{d}{\bm Z}_{k}{\bm 1}_d).
    \end{equation} 
\end{prop}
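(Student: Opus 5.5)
The argument combines Assumption \ref{Assumption_framework}-(1) with the consensus results, Proposition \ref{prop:consensus_unb} (for \eqref{eq:uncom}) and Proposition \ref{prop:consensus_con} (for \eqref{eq:contractive}). Write $\bar{\bm z}_k := \frac{1}{d}{\bm Z}_k{\bm 1}_d$. Then ${\bm z}_{i,k}-\bar{\bm z}_k$ is the $i$-th column of ${\bm Z}_{\perp,k}$, so $\rho_k:=\max_i\|{\bm z}_{i,k}-\bar{\bm z}_k\|\le \|{\bm Z}_{\perp,k}\|\to 0$.

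Set $\tilde\epsilon_k := \epsilon_k+\rho_k$. By Assumption \ref{Assumption_framework}-(1), $\epsilon_k\to0$ since $\{{\bm Z}_k\}$ is bounded, so $\{\tilde\epsilon_k\}$ is nonnegative and diminishing. It remains to check that
\[
\mathrm{conv}\Bigl(\tfrac1d\textstyle\sum_{i}\Phi_i^{\epsilon_k}({\bm z}_{i,k})\Bigr)\subseteq \Phi^{\tilde\epsilon_k}(\bar{\bm z}_k).
\]
Take ${\bm v}_i\in\Phi_i^{\epsilon_k}({\bm z}_{i,k})$. Then ${\bm v}_i={\bm u}_i+{\bm b}_i$ with ${\bm u}_i\in\Phi_i({\bm y}_i)$, $\|{\bm y}_i-{\bm z}_{i,k}\|\le\epsilon_k$ and $\|{\bm b}_i\|\le\epsilon_k$. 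Hence $\|{\bm y}_i-\bar{\bm z}_k\|\le\tilde\epsilon_k$, and therefore ${\bm v}_i\in\Phi_i^{\tilde\epsilon_k}(\bar{\bm z}_k)$.

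The difficulty is that the points ${\bm y}_i$ differ across agents, so $\frac1d\sum_i{\bm u}_i$ need not lie in $\frac1d\sum_i\Phi_i({\bm y})$ for any single ${\bm y}$. This is the step I expect to be the main obstacle: $\Phi^{\delta}$ of the aggregate map requires a common base point. I would handle it by a compactness/outer-semicontinuity argument rather than a direct inclusion.

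Concretely, I would define $\tilde\epsilon_k$ as the smallest admissible radius, $\tilde\epsilon_k:=\inf\{\delta\ge0: \frac1d{\bm H}_k{\bm 1}_d\in\Phi^{\delta}(\bar{\bm z}_k)\}$ (plus $1/k$ to get attainment), and show $\tilde\epsilon_k\to0$ by contradiction. Suppose instead that along a subsequence $\tilde\epsilon_k\ge c>0$. By boundedness (Assumption \ref{asp:global_stability}) and local boundedness of each $\Phi_i$, we may pass to a further subsequence along which $\bar{\bm z}_k\to\bar{\bm z}$, ${\bm y}_i\to\bar{\bm z}$, and each ${\bm u}_i$ (and, by Carathéodory, each of the finitely many convex-combination weights and points) converges. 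Closed graphs then give limits ${\bm u}_i^*\in\Phi_i(\bar{\bm z})$, so $\frac1d{\bm H}_k{\bm 1}_d$ converges to an element of $\mathrm{conv}(\frac1d\sum_i\Phi_i(\bar{\bm z}))=\Phi(\bar{\bm z})$. But then, for large $k$ along the subsequence, $\frac1d{\bm H}_k{\bm 1}_d$ lies within $c/2$ of $\Phi(\bar{\bm z})$ and $\|\bar{\bm z}_k-\bar{\bm z}\|<c/2$. This yields membership in $\Phi^{c/2}(\bar{\bm z}_k)$, contradicting $\tilde\epsilon_k\ge c$.

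Finally, note that the sequence is random. The argument is applied pathwise on the almost-sure event where consensus (Propositions \ref{prop:consensus_unb}/\ref{prop:consensus_con}) holds and the iterates are bounded, which gives the claim for both sub-frameworks simultaneously.
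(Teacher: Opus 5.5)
Your proposal is correct and follows essentially the paper's route: consensus (Propositions \ref{prop:consensus_unb}/\ref{prop:consensus_con}) moves every base point to $\bar{\bm z}_k$ at the cost of enlarging the radius to $\epsilon_k+\|{\bm Z}_{\perp,k}\|$, and then a contradiction argument along a convergent subsequence shows that the remaining enlargement around $\Phi(\bar{\bm z}_k)$ can be taken diminishing; this step uses boundedness of $\{{\bm Z}_k\}$ together with local boundedness and closed graphs of the $\Phi_i$. The only difference is technical: you track the single point $\frac{1}{d}{\bm H}_k{\bm 1}_d$ via Carath\'eodory and a minimal-radius definition of $\tilde\epsilon_k$, whereas the paper controls the whole set $\mathrm{conv}\bigl(\frac{1}{d}\sum_i\Phi_i^{\epsilon_k+\|{\bm Z}_{\perp,k}\|}(\bar{\bm z}_k)\bigr)$ uniformly, using convexity of the distance to the convex limit set.
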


\begin{proof}
Let $\epsilon^{\star}_k = \norm{{\bm Z}_{\perp, k}}$. From the definition of ${\bm P}_{\perp}$, it follows that $\norm{{\bm z}_{i,k} - \frac{1}{d}{\bm Z}_k {\bm 1}_d} \leq \norm{{\bm Z}_k{\bm P}_{\perp}} = \epsilon^{\star}_k$. For brevity, denote  $\ca{C}_k: = \conv( \frac{1}{d}\sum_{i = 1}^d \Phi_i^{\epsilon_k + \epsilon^{\star}_k}(\frac{1}{d}{\bm Z}_k {\bm 1}_d)  )$. According to Assumption \ref{Assumption_framework}(1), one attains that 
    \begin{equation}
        \frac{1}{d}{\bm H}_k {\bm 1}_d  \in \conv\left( \frac{1}{d}\sum_{i = 1}^d \Phi_i^{\epsilon_k}({\bm z}_{i,k})  \right)
        \subseteq \ca{C}_{k}.
    \end{equation}
It remains to show that there exists a nonnegative diminishing sequence $\{\tilde{\epsilon}_k\}$ such that 
    \begin{equation}
        \label{Eq_prop_ave_scheme_1}
        \ca{C}_{k} \subseteq \Phi^{\tilde{\epsilon}_k}(\frac{1}{d}{\bm Z}_k {\bm 1}_d). 
    \end{equation}
We proceed by contradiction. Suppose there exists $\delta_{\varepsilon} > 0$ and a subsequence $\{k_j\} \subset \bb{N}_+$ such that 
    \begin{equation}
        \label{Eq_prop_ave_scheme_0}
        \sup\left\{\mathrm{dist}\left( {\bm y},  \Phi^{\delta_{\varepsilon}}(\frac{1}{d}{\bm Z}_{k_j} {\bm 1}_d) \right): {\bm y} \in \ca{C}_{k_{j}} \right\} > 0. 
    \end{equation}
    
Since $\{{\bm Z}_k\}$ is uniformly bounded, without loss of generality, assume that $\{{\bm Z}_{k_j}\}$ converges to some $\tilde{{\bm Z}} \in \Rnd$. By the closedness of the graph and the local boundedness of $\Phi_i$, we obtain  
    \begin{equation*}\lim_{j\to +\infty}\sup\left\{ \mathrm{dist}\left({\bm y}, \Phi_i(\frac{\tilde{{\bm Z}}{\bm 1}_d}{d})  \right): {\bm y} \in  \Phi_i^{\epsilon_{k_j} + \epsilon^{\star}_{k_j}}(\frac{{\bm Z}_{k_j} {\bm 1}_d}{d})\right\} = 0.
    \end{equation*}
Based on Jensen’s inequality, it follows that 
    \begin{equation*}
    \lim_{j\to +\infty}\sup\left\{ \mathrm{dist}\left({\bm y}, \Phi(\frac{1}{d}\tilde{{\bm Z}}{\bm 1}_d)  \right): {\bm y} \in  \ca{C}_{k_{j}} \right\} = 0.
    \end{equation*}
    which contradicts \eqref{Eq_prop_ave_scheme_0}. This completes the proof. 

\end{proof}

\begin{lem}\label{claim3}
Suppose that step-sizes $\{\theta_{k}\}$ and $\{\eta_{k}\}$ satisfy \begin{equation*}
\sum_{k = 0}^{\infty} \eta_k = +\infty, \quad \lim_{k\to +\infty} \frac{\eta_k}{\theta_k} = 0, \quad \lim_{k\to +\infty} \theta_{k}\log(k) =0, \quad \lim_{k\to +\infty} \frac{\theta_{k}^2}{\eta_k}\log(k) =0, 
\end{equation*}
and let $\{\upsilon_{k}\}$ be a uniformly bounded martingale difference sequence with respect to filtration $\mathcal{F}_{k}:=\sigma(\{ {\bm \omega}_{j}, {\bm Z}_j, \Xi_{j}| j\leq k\})$. Define
$\hat{\upsilon}_{k}:=\frac{\eta_{k}}{\theta_{k}}{\upsilon}_{k}$. Then, for any $T>0$, we have
\begin{equation*}
    \lim_{s \to +\infty}\sup_{s\leq i\leq \Lambda_{\eta}(\lambda_{\eta}(s) + T)} \norm{ \sum_{k = s}^{i}\theta_k \hat{\upsilon}_{k+1} } =0. 
\end{equation*}
\end{lem}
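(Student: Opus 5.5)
Note first that $\theta_k\hat\upsilon_{k+1}=\frac{\eta_k\theta_k}{\theta_k}\upsilon_{k+1}$. Reading $\hat\upsilon_{k+1}$ as $\frac{\eta_k}{\theta_k}\upsilon_{k+1}$, which is how it enters the averaged and consensus schemes, each summand becomes $\theta_k\hat\upsilon_{k+1}=\eta_k\upsilon_{k+1}$. The claim is then a statement about the noise sums $\sum_{k=s}^{i}\eta_k\upsilon_{k+1}$ over windows of $\eta$-length $T$. The plan is to prove this tail estimate with a martingale concentration argument of the kind behind \cite[Proposition 4.4]{benaim2006dynamics}. We only need $\eta_k=o(1/\log k)$, and this follows from the hypotheses because $\eta_k\le\theta_k$ eventually and $\theta_k\log k\to0$.

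Let $M$ be a uniform bound, $\|\upsilon_k\|\le M$. For fixed $s$, set $S_i:=\sum_{k=s}^{i}\eta_k\upsilon_{k+1}$. Then $(S_i)_{i\ge s}$ is a martingale with respect to $\{\mathcal{F}_{i+1}\}$, and its increments are bounded by $M\eta_k$. Working coordinatewise, which costs only a dimensional constant, Azuma--Hoeffding together with Doob's maximal inequality for the exponential submartingale gives
\begin{equation*}
\mathbb{P}\Big(\sup_{s\le i\le m(s)}\|S_i\|>\varepsilon\Big)\le 2c\exp\Big(-\frac{\varepsilon^2}{2cM^2\sum_{k=s}^{m(s)}\eta_k^2}\Big),
\end{equation*}
where $m(s):=\Lambda_\eta(\lambda_\eta(s)+T)$ and $c$ depends only on the dimension. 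We have $\sum_{k=s}^{m(s)}\eta_k^2\le \big(\sup_{k\ge s}\eta_k\big)(T+\eta_s)$. Write $\beta(s):=\sup_{k\ge s}\eta_k\log k$, so that $\beta(s)\to0$. The right-hand side is then bounded by $2c\exp\!\big(-\varepsilon^2\log s/(2cM^2(T+1)\beta(s))\big)=2c\,s^{-\varepsilon^2/(2cM^2(T+1)\beta(s))}$. The exponent tends to infinity, so the sum over $s$ is finite.

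By Borel--Cantelli, almost surely $\sup_{s\le i\le m(s)}\|S_i\|\le\varepsilon$ for all large $s$. Taking $\varepsilon=1/n$ and intersecting over $n$ gives the almost-sure limit, which is the claim.

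The main obstacle is the summability step. One must bound the window variance by $\eta_s$-order rather than by $T$ alone, and then the $\log$ rate is exactly what turns the Gaussian tail into a summable power $s^{-p}$ with $p\to\infty$. If $\hat\upsilon$ is instead meant with $\theta$-weights, so that the window is measured in $\eta$-time but the summands are weighted by $\theta_k\upsilon_{k+1}$ with rescaled noise, the same argument should go through using $\theta_k^2\log k/\eta_k\to0$. In that case the bound to use is $\sum_{k=s}^{m(s)}\eta_k^2\cdot(\theta_k/\eta_k)^2\le(T+1)\sup_{k\ge s}\theta_k^2/\eta_k$, and this condition is presumably why it appears in the hypotheses. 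The rest of the proof is unchanged.
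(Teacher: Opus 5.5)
Your proof is correct. It uses the same ingredients as the paper's proof: an exponential supermartingale with Doob's maximal inequality, a variance proxy bounded by a supremum of a step-size ratio over the window, the logarithmic rate to make tails summable, and Borel--Cantelli. Two differences are worth recording.

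First, your main line proves the lemma as literally stated, with $\upsilon$ bounded and increments $\eta_k\upsilon_{k+1}$. That is essentially \cite[Proposition 4.4]{benaim2006dynamics}, and it never uses the hypothesis $\theta_k^2\log(k)/\eta_k\to0$. The paper's proof instead treats $\hat\upsilon$ as the bounded sequence, with increments $\theta_k\hat\upsilon_{k+1}$ and variance proxy $\sum_{k=s}^{m(s)}\theta_k^2$. That is the version Theorem~\ref{thm:convergence} actually needs. In the unbiased case, $\upsilon_{k+1}$ there contains $\frac{\theta_k}{\eta_k}\frac{1}{d}{\bm E}_{k+1}({\bm W}-\Diag({\bm W})){\bm 1}_d$ and is unbounded, while $\hat\upsilon_{k+1}$ is bounded. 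Your closing remark covers exactly this case with the right estimate $\sum_{k=s}^{m(s)}\theta_k^2\le (T+1)\sup_{k\ge s}\theta_k^2/\eta_k$, so you should promote it to the main argument rather than leave it as an afterthought.

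Second, you apply Borel--Cantelli over every starting index $s$, using that the tail is $2c\,s^{-p(s)}$ with $p(s)\to\infty$. The paper instead partitions $\eta$-time into blocks $[jT,(j+1)T]$, applies Borel--Cantelli over blocks, and then covers an arbitrary window by two consecutive blocks. Your route avoids that covering step and is shorter.

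Your coordinatewise union bound is also the rigorous way to pass from directional sub-Gaussian bounds to the norm. The paper instead substitutes the random unit vector along $\sum_{k=s}^{i}\theta_k\hat\upsilon_{k+1}$ into an inequality proved for a fixed ${\bm y}$, which is not justified as written.

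A minor point: the overshoot in $\sum_{k=s}^{m(s)}\eta_k$ is $\eta_{m(s)}$, not $\eta_s$, but your constant $T+1$ absorbs it for large $s$.
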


The proof of Lemma \ref{claim3} is provided in Appendix \ref{appendix:claim3}. Based on Proposition \ref{prop:ave_scheme} and Lemma \ref{claim3}, we  establish recursion relations for $\{\frac{1}{d} {\bm Z}_{k}{\bm 1}_d\}$ and derive the global convergence of framework \eqref{Eq_Framework} in Theorem \ref{thm:convergence}. 

\begin{theo}\label{thm:convergence}
Suppose Assumption \ref{Assumption_framework} and \ref{asp:global_stability} hold, and let the sequence $\{{\bm Z}_k\}$ be generated by the sub-framework \eqref{eq:uncom} or \eqref{eq:contractive}. Then
\begin{equation*}
\lim_{k\to \infty} \mathrm{dist}({\bm Z}_{k}, \{{\bm Z}\in \mathbb{R}^{m \times d}: {\bm Z}= {\bm z} {\bm 1}^{\top}, {\bm z}\in \mathcal{A}\})=0,
\end{equation*}
Moreover, the sequence $\{\psi({\bm z}_{i,k})\}$ converges for each $i \in [d]$. 
\end{theo}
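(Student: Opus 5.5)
The plan is to analyze the averaged sequence $\bar{\bm z}_k := \frac{1}{d}{\bm Z}_k{\bm 1}_d$ and cast its dynamics into the form \eqref{Eq_stable_random}, so that Lemma \ref{lem_stable_random} applies with $\Phi$ from Assumption \ref{Assumption_framework}-(2) and $\psi$ as Lyapunov function. Convergence of ${\bm Z}_k$ then follows by combining this with the consensus results, Propositions \ref{prop:consensus_unb} and \ref{prop:consensus_con}.

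\textbf{Averaged recursion.} Multiply \eqref{Eq_Framework} on the right by $\frac{1}{d}{\bm 1}_d$. Since ${\bm W}{\bm 1}_d={\bm 1}_d$, the mixing term gives $\bar{\bm z}_k$ exactly.

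For \eqref{eq:contractive}, $({\bm W}-{\bm I}_d){\bm 1}_d=0$, so the compression term vanishes identically. This leaves
\[
\bar{\bm z}_{k+1}=\bar{\bm z}_k-\eta_k\Bigl(\tfrac{1}{d}{\bm H}_k{\bm 1}_d+\tfrac{1}{d}\Xi_{k+1}{\bm 1}_d\Bigr).
\]

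For \eqref{eq:uncom}, $({\bm W}-\Diag({\bm W})){\bm 1}_d={\bm 1}_d-\Diag({\bm W}){\bm 1}_d$, which is nonzero in general. The recursion therefore carries an extra term $\theta_k{\bm E}_{k+1}({\bm W}-\Diag({\bm W}))\frac{{\bm 1}_d}{d}$. I write it as $\eta_k\cdot\frac{\theta_k}{\eta_k}(\cdots)$ and absorb it into the noise, setting
\[
\upsilon_{k+1}=\tfrac{1}{d}\Xi_{k+1}{\bm 1}_d-\tfrac{\theta_k}{\eta_k}{\bm E}_{k+1}({\bm W}-\Diag({\bm W}))\tfrac{{\bm 1}_d}{d}.
\]
Both pieces are martingale differences with respect to $\mathcal{F}_k$. ${\bm E}_{k+1}$ is uniformly bounded by Assumption \ref{Assumption_compress0} together with Assumption \ref{asp:global_stability}, since $\|C({\bm z},{\bm\omega})\|\le\beta\|{\bm z}\|$.

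\textbf{Verifying Assumption \ref{Assumption_stable_random}.} Proposition \ref{prop:ave_scheme} gives $\frac{1}{d}{\bm H}_k{\bm 1}_d\in\Phi^{\tilde\epsilon_k}(\bar{\bm z}_k)$ with $\tilde\epsilon_k\to0$. Boundedness of $\{\bar{\bm z}_k\}$ follows from Assumption \ref{asp:global_stability}, and the Lyapunov and weak Sard requirements are Assumption \ref{Assumption_framework}-(2). The only nontrivial item is the noise condition (3) of Assumption \ref{Assumption_stable_random}:

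1. The $\Xi$ part is a bounded martingale difference with $\eta_k=o(1/\log k)$. This follows from $\eta_k\le\theta_k$ eventually and $\theta_k\log k\to0$, and then \cite[Proposition 4.4]{benaim2006dynamics} applies.
2. The compression part equals $\sum\theta_k\tilde\upsilon_{k+1}$ with $\tilde\upsilon$ bounded, summed over windows of $\eta$-length $T$. This is the step I expect to be the main obstacle, because $\theta_k\gg\eta_k$. Lemma \ref{claim3} is designed for it: with $\tilde\upsilon_{k+1}:=\frac{\theta_k}{\eta_k}\hat\upsilon$-type rescaling, the summand becomes $\theta_k\hat\upsilon_{k+1}$ where $\hat\upsilon=\frac{\eta_k}{\theta_k}\upsilon$. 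I will match the normalization so that $\theta_k^2/\eta_k$ controls the quadratic variation over a window, giving variance $\sum\theta_k^2\lesssim T\sup\theta_k^2/\eta_k$. Combined with $\frac{\theta_k^2}{\eta_k}\log k\to0$ and an exponential martingale (Azuma) bound plus Borel–Cantelli, as in the proof of Lemma \ref{claim3}, this yields the required uniform vanishing.

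\textbf{Conclusion.} Lemma \ref{lem_stable_random} then yields $\mathrm{dist}(\bar{\bm z}_k,\mathcal A)\to0$ and convergence of $\psi(\bar{\bm z}_k)$. By Proposition \ref{prop:consensus_unb} or \ref{prop:consensus_con}, $\|{\bm Z}_k-\bar{\bm z}_k{\bm 1}^\top\|=\|{\bm Z}_{\perp,k}\|\to0$. This gives the distance statement, since $\mathrm{dist}({\bm Z}_k,\mathcal A{\bm 1}^\top)\le\|{\bm Z}_{\perp,k}\|+\sqrt d\,\mathrm{dist}(\bar{\bm z}_k,\mathcal A)$. Finally, $\psi$ is locally Lipschitz and all iterates lie in a compact set, so $|\psi({\bm z}_{i,k})-\psi(\bar{\bm z}_k)|\le L\|{\bm Z}_{\perp,k}\|\to0$. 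Hence each $\{\psi({\bm z}_{i,k})\}$ converges to the same limit.
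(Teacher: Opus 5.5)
Your proposal is correct and follows the paper's proof essentially step for step. You average the recursion and invoke Proposition~\ref{prop:ave_scheme} for the drift. You absorb the unbiased-compression term $\theta_k\frac{1}{d}{\bm E}_{k+1}({\bm W}-\Diag({\bm W})){\bm 1}_d$ into the noise with a $\theta_k/\eta_k$ factor, so that Lemma~\ref{claim3} verifies Assumption~\ref{Assumption_stable_random}-(3), and you conclude via Lemma~\ref{lem_stable_random} together with Propositions~\ref{prop:consensus_unb} and \ref{prop:consensus_con}. The differences are cosmetic: the paper feeds the whole rescaled noise $\frac{\eta_k}{\theta_k}\upsilon_{k+1}$ (bounded because $\eta_k/\theta_k\to 0$) into Lemma~\ref{claim3} at once, whereas you split off the $\Xi$ part and handle it with \cite[Proposition 4.4]{benaim2006dynamics}; and your local-Lipschitz bound $|\psi({\bm z}_{i,k})-\psi(\bar{\bm z}_k)|\le L\|{\bm Z}_{\perp,k}\|$ makes explicit a step the paper only asserts.
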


\begin{proof}
Combining Proposition \ref{prop:ave_scheme} with the update schemes in \eqref{eq:uncom} and \eqref{eq:contractive}, we derive the following recurrence relations for $\{\frac{1}{d}{\bm Z}_{k}{\bm 1}_d\}$:

\textbf{Unbiased compression:} 
\begin{equation}\label{eq:unb_theorem}
\begin{aligned}
\frac{1}{d}{\bm Z}_{k+1}{\bm 1}_d & \in \frac{1}{d} {\bm Z}_k{\bm 1}_d - \eta_k \Phi^{\tilde{\epsilon}_{k}} (\frac{1}{d}{\bm Z}_{k}{\bm 1}_d)  + \theta_{k}\frac{1}{d} {\bm E}_{k+1}({\bm W}- \Diag({\bm W})){\bm 1}_d -\eta_k\frac{1}{d}  \Xi_{k+1}{\bm 1}_d.\\
\end{aligned}
\end{equation}

\textbf{Contractive compression:} 
\begin{equation}\label{eq:con_theorem}
\begin{aligned}
\frac{1}{d}{\bm Z}_{k+1}{\bm 1}_d & \in \frac{1}{d}{\bm Z}_k {\bm 1}_d - \eta_k \Phi^{\tilde{\epsilon}_{k}} (\frac{1}{d}{\bm Z}_{k}{\bm 1}_d)  -\eta_k \frac{1}{d} \Xi_{k+1}{\bm 1}_d.\\
\end{aligned}
\end{equation}

For update scheme \eqref{eq:unb_theorem}, we define $\delta_k:= \tilde{\epsilon}_{k}$ and $\upsilon_{k+1}:= \frac{\theta_{k}}{\eta_k}\frac{1}{d}{\bm E}_{k+1}({\bm W}-\Diag({\bm W})){\bm 1}_d-\frac{1}{d}\Xi_{k+1}{\bm 1}_{d}$. Then \eqref{eq:unb_theorem} can be rewritten as
\begin{equation}\label{eq:rephrase_con}
\frac{1}{d} {\bm Z}_{k+1}{\bm 1}_d  \in \frac{1}{d} {\bm Z}_k{\bm 1}_d - \eta_k \Phi^{\delta_k} (\frac{1}{d}{\bm Z}_{k}{\bm 1}_d)  + \eta_{k}\upsilon_{k+1},    
\end{equation}
where $\delta_{k}$ tends to $0$ as $k\to \infty$, $\{\upsilon_{k}\}$ is a martingale difference sequence. We aim to apply Lemma \ref{lem_stable_random} again to derive the asymptotic convergence of iterates $\{\frac{1}{d}{\bm Z}_{k}{\bm 1}_d\}$. 

It is easy to see Assumption \ref{Assumption_stable_random}-(1) holds vacuously and Assumption \ref{Assumption_stable_random}-(2) is equivalent to Assumption \ref{Assumption_framework}-(2). However, since $\{\upsilon_{k}\}$ in \eqref{eq:rephrase_con} is not uniformly bounded, we need to check whether Assumption \ref{Assumption_stable_random}-(3) holds. Denoting $\hat{\upsilon}_{k}=\frac{\eta_{k}}{\theta_{k}}{\upsilon}_{k}$, Lemma \ref{claim3} gives us
\begin{equation*}
    \lim_{s\to +\infty} \sup_{s\leq i \leq \Lambda_{\eta}( \lambda_{\eta}(s)+T)} \norm{ \sum_{k = s}^{i}\eta_k \upsilon_{k+1}}=   \lim_{s \to +\infty}\sup_{s\leq i\leq \Lambda_{\eta}(\lambda_{\eta}(s) + T)} \norm{ \sum_{k = s}^{i}\theta_k \hat{\upsilon}_{k+1} } =0. 
\end{equation*}
Applying  Lemma \ref{lem_stable_random}, we conclude that any cluster point of $\{\frac{1}{d}{\bm Z}_k {\bm 1}_d\}$ lies in $\ca{A}$, and the sequence of function values $\{\psi(\frac{1}{d}{\bm Z}_k {\bm 1}_d)\}$ converges. Combining  Proposition \ref{prop:consensus_unb}, we know   any cluster point of $\{{\bm Z}_{k}\}$ coincides with a cluster point of $\{\frac{1}{d}{\bm Z}_k {\bm 1}_d {\bm 1}_{d}^{\top}\}$, and $\lim_{k\to \infty}\psi(z_{i,k}) = \lim_{k\to \infty}\psi(\frac{1}{d}{\bm Z}_k {\bm 1}_d))$.

For update scheme \eqref{eq:con_theorem}, we define $\delta_k:= \tilde{\epsilon}_{k}$ and $\upsilon_{k+1}:= \frac{1}{d}\Xi_{k+1}{\bm 1}_d$. Then \eqref{eq:con_theorem} can be reformulated as
\begin{equation}
\frac{1}{d} {\bm Z}_{k+1}{\bm 1}_d  \in \frac{1}{d} {\bm Z}_k{\bm 1}_d - \eta_k \Phi^{\delta_k} (\frac{1}{d}{\bm Z}_{k}{\bm 1}_d)  + \eta_{k}\upsilon_{k+1}.    
\end{equation}
Similar to the proof of \eqref{eq:uncom}, one can easily verify that Assumption \ref{Assumption_stable_random} holds. Combining Lemma \ref{lem_stable_random} with Proposition \ref{prop:consensus_con},  we deduce that any cluster point of $\{{\bm Z}_{k}\}$ lies in $\{{\bm Z}\in \mathbb{R}^{m \times d}: {\bm Z}= {\bm z} {\bm 1}^{\top}, {\bm z}\in \mathcal{A}\}$ and the sequence $\{\psi({\bm z}_{k,i})\}$ converges for each $i \in [d]$. The proof is completed.

\end{proof}



\section{Developing Decentralized Stochastic Subgradient-type Methods with Communication Compression and Convergence Guarantees}\label{sec:methods}

In this section, we demonstrate that framework \eqref{Eq_Framework} encloses a wide range of decentralized stochastic subgradient-type methods with communication compression in nonsmooth optimization. Some of the methods are nonsmooth extensions of the existing approaches, while others are newly developed based on our framework \eqref{Eq_Framework}. More importantly, we establish convergence results for these decentralized methods for the minimization of nonsmooth definable functions, with applications to the training of nonsmooth neural networks.

\subsection{Decentralized SGD-type methods with communication compression}\label{sec:SGD}

\paragraph{Stochastic nonsmooth extension of QDGD.}

QDGD \cite{reisizadeh2019exact} is a decentralized SGD method with unbiased compression originally designed for smooth optimization. When the objective function is nonsmooth and the evaluation noise is present, we replace $\nabla f_{i}({\bm x}_{i,k})$ with a stochastic subgradient $ {\bm g}_{i, k} \in D_{F_i(\cdot,\zeta_{i,k+1})}({\bm x}_{i,k})$, and employ diminishing sequences $\{\theta_{k}\}$ and $\{\eta_{k}\}$ instead of constant $\theta$ and $\eta$. The stochastic nonsmooth extension of QDGD can then be compactly written as 
\begin{equation*}
\manualeqtag{QSDGD+}{unb:dsgd}
\left\{
\begin{aligned}
{\bm G}_{k} &= [{\bm g}_{1,k}, \ldots, {\bm g}_{d,k}] \in [D_{F_1(\cdot,\zeta_{i,k+1})}({\bm x}_{i,k}), \ldots, D_{F_d(\cdot,\zeta_{i,k+1})}({\bm x}_{i,k})], \\
{\bm E}_{k+1} & =C({\bm X}_{k}, {\bm \omega}_{k+1}) - {\bm X}_{k},\\
{\bm X}_{k+1} & = (1-\theta_k) {\bm X}_k + \theta_k {\bm X}_k {\bm W} + \theta_k {\bm E}_{k+1}({\bm W}-\Diag ({\bm W})) - \eta_k {\bm G}_k. \\
\end{aligned}
\right.
\end{equation*}
Here, $C$ is an unbiased compression operator. $\{\theta_{k}\}$ is a diminishing sequence of step-sizes with respect to the local average.
\paragraph{Nonsmooth extension of CHOCO-SGD.} 
CHOCO-SGD \cite{koloskova2019decentralized, koloskova2019decentralized2} is a decentralized SGD method that combines contractive compression with error compensation. We introduce a nonsmooth extension of CHOCO-SGD, given by the following update scheme:
\begin{equation*}
\manualeqtag{CHOCO-SGD+}{con:dsgd}
\left\{
\begin{aligned}
{\bm G}_{k} &= [{\bm g}_{1,k}, \ldots, {\bm g}_{d,k}] \in [D_{F_1(\cdot,\zeta_{i,k+1})}({\bm x}_{i,k}), \ldots, D_{F_d(\cdot,\zeta_{i,k+1})}({\bm x}_{i,k})], \\
  \hat{{\bm X}}_{k+1} & = \hat{{\bm X}}_{k} + \gamma_k C({\bm X}_{k}-\hat{{\bm X}}_{k}, {\bm \omega}_{k+1}),\\
  {\bm X}_{k+1} & =  {\bm X}_k  + \theta_k \hat{{\bm X}}_{k+1} ({\bm W}-{\bm I}_{d}) - \eta_k {\bm G}_k.\\
\end{aligned}
\right.
\end{equation*}
Here, $C$ is a contractive compression operator. The two-timescale sequences $\{\gamma_k\}$ and $\{\theta_{k}\}$  replace the constant $\theta$ and $\gamma =1$ used in CHOCO-SGD.  With the notation ${\bm E}_{k+1}:= \gamma_k C({\bm X}_{k}- \hat{{\bm X}_{k}}, {\bm \omega}_{k+1})-({\bm X}_{k}- \hat{{\bm X}_{k}})$, we can reformulate \eqref{con:dsgd} as 
\begin{equation*}
\left\{
\begin{aligned}
{\bm G}_{k} &= [{\bm g}_{1,k}, \ldots, {\bm g}_{d,k}] \in [D_{F_1(\cdot,\zeta_{i,k+1})}({\bm x}_{i,k}), \ldots, D_{F_d(\cdot,\zeta_{i,k+1})}({\bm x}_{i,k})], \\
  {\bm X}_{k+1}&  = (1-\theta_k) {\bm X}_k + \theta_k {\bm X}_k {\bm W} + \theta_k {\bm E}_{k+1}({\bm W}-{\bm I}_{d}) - \eta_k {\bm G}_k.
\end{aligned}
\right.
\end{equation*}

\paragraph{Nonsmooth extension of BEER and C-GT.}

BEER \cite{zhao2022beer} and C-GT \cite{liao2022compressed} utilize an auxiliary variable ${\bm Y}_{k}$ to track the stochastic gradient of the global objective function, and perform contractive compression-based communication on both 
${\bm X}_{k}$ and ${\bm Y}_{k}$ simultaneously. We consider a nonsmooth extension of BEER:


\begin{equation*}
\manualeqtag{BEER+}{con:dsgt}
\left\{
\begin{aligned}
{\bm G}_{k} &= [{\bm g}_{1,k}, \ldots, {\bm g}_{d,k}] \in [D_{F_1(\cdot,\zeta_{i,k+1})}({\bm x}_{i,k}), \ldots, D_{F_d(\cdot,\zeta_{i,k+1})}({\bm x}_{i,k})], \\
{\bm X}_{k+1} & = {\bm X}_k +  \theta_k \hat{\bm X}_{k+1}({\bm W}-{\bm I}_{d}) - \eta_k {\bm Y}_k,  \\
{\bm Y}_{k+1} & = {\bm Y}_k + \theta_k \hat{\bm Y}_{k+1}({\bm W}-{\bm I}_{d}) + {\bm G}_{k+1} -{\bm G}_{k}, \\   
 \hat{\bm X}_{k+1} & =  \hat{\bm X}_{k} + \gamma_{k}C({\bm X}_{k}-\hat{\bm X}_{k}, {\bm \omega}_{k+1}), \\
 \hat{\bm Y}_{k+1} & =  \hat{\bm Y}_{k} + \gamma_{k}C({\bm Y}_{k}-\hat{\bm Y}_{k}, {\bm \omega}_{k+1}), \\
{\bm Y}_{0} & = {\bm G}_0,\\
\end{aligned}
\right.
\end{equation*}
where $C$ is a contractive compression operator. The main differences between BEER and BEER+ are the choice of ${\bm G}_{k}$ and the use of slowly diminishing and two-timescale sequences $\{\theta_{k}\}, \{\gamma_{k}\}$ instead of fixed values. With the notation ${\bm E}_{k+1}:= \gamma_k C({\bm X}_{k}-\hat{{\bm X}}_{k}, {\bm \omega}_{k+1})-({\bm X}_{k}-\hat{{\bm X}}_{k})$, the update scheme of $\{{\bm X}_{k}\}$ becomes 
\begin{equation*}
{\bm X}_{k+1}  = (1-\theta_{k}){\bm X}_k +  \theta_k {\bm X}_{k}{\bm W} + \theta_{k} {\bm E}_{k+1}({\bm W}-{\bm I}_{d}) - \eta_k {\bm Y}_{k}.
\end{equation*}
In addition, the iterates of the nonsmooth extension of C-GT are given as follows:
\begin{equation*}
\manualeqtag{C-GT+}{con:dsgt(cgt)}
\left\{
\begin{aligned}
{\bm G}_{k} &= [{\bm g}_{1,k}, \ldots, {\bm g}_{d,k}] \in [D_{F_1(\cdot,\zeta_{i,k+1})}({\bm x}_{i,k}), \ldots, D_{F_d(\cdot,\zeta_{i,k+1})}({\bm x}_{i,k})], \\
{\bm X}_{k+1} & = {\bm X}_k +  \theta_k \tilde{\bm X}_{k+1}({\bm W}-{\bm I}_{d}) - \eta_k {\bm Y}_k,  \\
{\bm Y}_{k+1} & = {\bm Y}_k + \theta_k \tilde{\bm Y}_{k+1}({\bm W}-{\bm I}_{d}) + {\bm G}_{k+1} -{\bm G}_{k}, \\   
 \tilde{\bm X}_{k+1} & =  \hat{\bm X}_{k} + \gamma_{k}C({\bm X}_{k}-\hat{\bm X}_{k}, {\bm \omega}_{k+1}), \\
 \hat{\bm X}_{k+1} & = (1-\alpha_x)  \hat{\bm X}_{k}  +\alpha_x \tilde{\bm X}_{k+1},  \\
 \tilde{\bm Y}_{k+1} & =  \hat{\bm Y}_{k} + \gamma_{k}C({\bm Y}_{k}-\hat{\bm Y}_{k}, {\bm \omega}_{k+1}), \\
  \hat{\bm Y}_{k+1} & = (1-\alpha_y) \hat{\bm Y}_{k}  +  \alpha_y \tilde{\bm Y}_{k+1},  \\
  {\bm Y}_{0} & = {\bm G}_0,\\
\end{aligned}
\right.
\end{equation*}
where $\alpha_x \in (0,1]$. Compared to BEER+, C-GT+ includes an additional step of weighted average between the current $\tilde{\bm X}_{k+1}$ and the previous $\hat{\bm X}_{k}$. With the notation ${\bm E}_{k+1}:= \gamma_k C({\bm X}_{k}-\hat{{\bm X}}_{k}, {\bm \omega}_{k+1})-({\bm X}_{k}-\hat{{\bm X}}_{k})$, the update scheme of $\{(\hat{\bm X}_{k}, {\bm X}_{k})\}$ can be rewritten as

\begin{equation*}
\left\{
\begin{aligned}
\hat{\bm X}_{k+1} & = \hat{\bm X}_{k} +\alpha_{x} \gamma_{k} C({\bm X}_{k} -\hat{\bm X}_{k}, {\bm \omega}_{k+1}),\\ 
{\bm X}_{k+1} &  = (1-\theta_{k}){\bm X}_k +  \theta_k {\bm X}_{k}{\bm W} + \theta_{k} {\bm E}_{k+1}({\bm W}-{\bm I}_{d}) - \eta_k {\bm Y}_{k}.\\
\end{aligned}
\right.
\end{equation*}

\paragraph{Decentralized heavy-ball SGD with Nesterov momentum and communication compression.}

Heavy-ball SGD \cite{polyak1964some} accelerates the descent by introducing momentum, which helps dampen oscillations in regions with small gradients or high noise. Nesterov momentum \cite{nesterov1983method} further provides a ``look-ahead'' mechanism in the update rule, leading to faster convergence in many optimization problems. Motivated by the nonsmooth heavy-ball SGD proposed in \cite{le2024nonsmooth, xiao2023convergence}, we integrate unbiased and contractive compression with nonsmooth heavy-ball SGD equipped with Nesterov momentum. This leads to two novel methods, which are formalized as the following update schemes: \eqref{unb:dsm} and \eqref{con:dsm}.

\begin{equation*}
\manualeqtag{DSM-Unb}{unb:dsm}
\left\{
\begin{aligned}
{\bm G}_{k} &= [{\bm g}_{1,k}, \ldots, {\bm g}_{d,k}] \in [D_{F_1(\cdot,\zeta_{i,k+1})}({\bm x}_{i,k}), \ldots, D_{F_d(\cdot,\zeta_{i,k+1})}({\bm x}_{i,k})], \\
{\bm E}^{x}_{k+1} & =C({\bm X}_{k}, {\bm \omega}_{k+1})-{\bm X}_{k},\\
{\bm E}^{y}_{k+1}& =C({\bm Y}_{k}, {\bm \omega}_{k+1})-{\bm Y}_{k},\\
{\bm X}_{k+1} & = (1-\theta_k) {\bm X}_k + \theta_k {\bm X}_k {\bm W} + \theta_k {\bm E}^{x}_{k+1}({\bm W}-\Diag ({\bm W})) - \eta_k ({\bm Y}_k +\rho {\bm G}_{k}),  \\
{\bm Y}_{k+1} & = (1-\tau \eta_k)[(1-\theta_k) {\bm Y}_k + \theta_k {\bm Y}_k {\bm W} + \theta_k {\bm E}^{y}_{k+1}({\bm W}-\Diag ({\bm W}))] + \tau \eta_k {\bm G}_{k+1}.\\
\end{aligned}
\right.
\end{equation*}
Here, $C$ is an unbiased compression operator. $\{\theta_{k}\}$ stands for diminishing step-sizes corresponding to the local average. In the update step of ${\bm Y}_{k+1}$, the local aggregation based on unbiased compression is used as an inertial direction, while ${\bm G}_{k+1}$ serves as a descent direction. $\tau >0$ is the heavy-ball momentum parameter and $\rho \geq 0$ is the Nesterov momentum parameter.

\begin{equation*}
    \manualeqtag{DSM-Con}{con:dsm}
        \left\{
      \begin{aligned}
      {\bm G}_{k} &= [{\bm g}_{1,k}, \ldots, {\bm g}_{d,k}] \in [D_{F_1(\cdot,\zeta_{i,k+1})}({\bm x}_{i,k}), \ldots, D_{F_d(\cdot,\zeta_{i,k+1})}({\bm x}_{i,k})], \\
      \hat{\bm X}_{k+1}&= \hat{\bm X}_{k}+\gamma_{k}C({\bm X}_{k}-\hat{\bm X}_{k}, {\bm \omega}_{k+1}),\\
      \hat{\bm Y}_{k+1}& = \hat{\bm Y}_{k}+\gamma_{k}C({\bm Y}_{k}-\hat{\bm Y}_{k}, {\bm \omega}_{k+1}),\\
        {\bm X}_{k+1} & =  {\bm X}_k + \theta_{k} \hat{{\bm X}}_{k+1}({\bm W}-{\bm I}_{d}) - \eta_k ({\bm Y}_k + \rho {\bm G}_k),\\
        {\bm Y}_{k+1} & = (1-\tau \eta_{k})[{\bm Y}_k + \theta_{k} \hat{{\bm Y}}_{k+1}({\bm W}-{\bm I}_{d})] + \tau \eta_{k}{\bm G}_{k+1}.\\
      \end{aligned}
      \right.
  \end{equation*}
Here, $C$ is a contractive compression operator. $\{\theta_{k}\}, \{\gamma_{k}\}$ are  two-timescale diminishing 
 sequences of step-sizes. The sequence $\{{\bm Y}_{k}\}$ is updated by a local aggregation with contractive compression combined with a heavy-ball momentum step. Parameters $\tau, \rho$ are defined analogously to \eqref{unb:dsm}.


\paragraph{Decentralized sign-regularized SGD with communication compression.}

SignSGD \cite{bernstein2018signsgd, bernstein2018signsgd2} is a notable variant of SGD. It replaces the full gradient with its sign, aiming to normalize the update direction and reduce communication costs in distributed optimization. Motivated by DSignSGD proposed in \cite{zhang2024decentralized}, we present two decentralized sign-regularized SGD methods with communication compression.

\begin{equation*}
\manualeqtag{SignDSGD-Unb}{unb:signdsgd}
\left\{
\begin{aligned}
 {\bm G}_{k} &= [{\bm g}_{1,k}, \ldots, {\bm g}_{d,k}] \in [D_{F_1(\cdot,\zeta_{i,k+1})}({\bm x}_{i,k}), \ldots, D_{F_d(\cdot,\zeta_{i,k+1})}({\bm x}_{i,k})], \\
{\bm E}^{x}_{k+1} & =C({\bm X}_{k}, {\bm \omega}_{k+1})-{\bm X}_{k},\\
{\bm E}^{y}_{k+1}& =C({\bm Y}_{k}, {\bm \omega}_{k+1})-{\bm Y}_{k}, \\
{\bm X}_{k+1} & = (1-\theta_k) {\bm X}_k + \theta_k {\bm X}_k {\bm W} + \theta_k {\bm E}^{x}_{k+1}({\bm W}-\Diag ({\bm W})) - \eta_k \mathrm{sign}({\bm Y}_k+ \rho {\bm G}_{k}),  \\
{\bm Y}_{k+1} & = (1-\tau \eta_k)[(1-\theta_k) {\bm Y}_k + \theta_k {\bm Y}_k {\bm W} + \theta_k {\bm E}^{y}_{k+1}({\bm W}-\Diag ({\bm W}))] + \tau \eta_k {\bm G}_{k+1}. \\
\end{aligned}
\right.
\end{equation*}
Here, $C$ is an unbiased compression operator, and the sign map serves as a regularizer of the Nesterov momentum term ${\bm Y}_k+ \rho {\bm G}_{k}$. Parameters $\tau, \rho$, and $\{\theta_k\}$ are defined analogously to \eqref{unb:dsm}.

\begin{equation*}
    \manualeqtag{SignDSGD-Con}{con:signdsgd}
    \left\{
    \begin{aligned}
    {\bm G}_{k} &= [{\bm g}_{1,k}, \ldots, {\bm g}_{d,k}] \in [D_{F_1(\cdot,\zeta_{i,k+1})}({\bm x}_{i,k}), \ldots, D_{F_d(\cdot,\zeta_{i,k+1})}({\bm x}_{i,k})], \\
    \hat{\bm X}_{k+1}& = \hat{\bm X}_{k}+\gamma_{k}C({\bm X}_{k}-\hat{\bm X}_{k}, {\bm \omega}_{k+1}),\\
    \hat{\bm Y}_{k+1}& = \hat{\bm Y}_{k}+\gamma_{k}C({\bm Y}_{k}-\hat{\bm Y}_{k}, {\bm \omega}_{k+1}),\\
      {\bm X}_{k+1} & =  {\bm X}_k + \theta_{k} \hat{{\bm X}}_{k+1}({\bm W}-{\bm I}_{d}) - \eta_k \mathrm{sign}({\bm Y}_k + \rho {\bm G}_k),\\
      {\bm Y}_{k+1} & = (1-\tau \eta_{k})[{\bm Y}_k + \theta_{k} \hat{{\bm Y}}_{k}({\bm W}-{\bm I}_{d})] + \tau \eta_{k}{\bm G}_{k+1}.\\
    \end{aligned}
    \right.
\end{equation*}
Here, $C$ is a contractive compression operator. Step-sizes $\{\theta_k\}, \{\gamma_k\}$,  and parameters $\tau, \rho$ are defined similarly to \eqref{con:dsm}.

\paragraph{Decentralized stochastic momentum tracking with  contractive compression.}

Based on our framework \eqref{eq:contractive}, we modify compression term and the update rule for the momentum variables in DoCoM \cite{yau2022docom}, and propose a novel decentralized momentum-tracking method with contractive compression.

\begin{equation*}
\manualeqtag{DSGTM-Con}{con:dsgtm}
  \left\{
        \begin{aligned}
        {\bm G}_{k} &= [{\bm g}_{1,k}, \ldots, {\bm g}_{d,k}] \in [D_{F_1(\cdot,\zeta_{i,k+1})}({\bm x}_{i,k}), \ldots, D_{F_d(\cdot,\zeta_{i,k+1})}({\bm x}_{i,k})], \\
        \hat{{\bm X}}_{k+1} & = \hat{{\bm X}}_{k} + \gamma_k C({\bm X}_{k}- \hat{{\bm X}}_{k}, {\bm \omega}_{k+1}),\\
          {\bm X}_{k+1}  & = {\bm X}_k + \theta_k \hat{\bm X}_{k+1} ({\bm W}-{\bm I}_d)  - \eta_k {\bm Y}_k,\\
          {\bm V}_{k+1} & = (1-\tau \eta_{k}){\bm V}_{k} + \tau \eta_{k}  {\bm G}_{k+1},\\ 
          \hat{{\bm Y}}_{k+1} & = \hat{{\bm Y}}_{k} + \gamma_k C({\bm Y}_{k}- \hat{{\bm Y}}_{k}, {\bm \omega}_{k+1}),\\
          {\bm Y}_{k+1} & = {\bm Y}_k + \theta_k \hat{\bm Y}_{k+1} ({\bm W}-{\bm I}_d)+ {\bm V}_{k+1} -{\bm V}_{k}, \\ 
          {\bm V}_0 & = {\bm G}_0.\\
        \end{aligned}
  \right.
    \end{equation*}
Here, $C$ is a contractive compression operator. ${\bm V}_k$ represents the collection of momentum variables, and ${\bm Y}_{k}$ serves as the collection of momentum-tracking variables incorporating compression-based aggregation. $\{\theta_{k}\}, \{\gamma_{k}\}$ are two-timescale diminishing sequences of step-sizes.

\subsection{Convergence Guarantees}

In this part, we demonstrate that all methods discussed in Section \ref{sec:SGD} fit into our framework \eqref{Eq_Framework}, and hence inherit global convergence guarantees to $D_{f}$-critical points in the minimization of nonsmooth definable functions. Throughout this section, we impose several assumptions on the original problem \eqref{Prob_DOP} and the  methods in Section \ref{sec:SGD}. 

\begin{assumpt}\label{Assumption_obj}
\begin{enumerate}

\item[(1)] For each $i \in [d]$, $\zeta_{i}$ is drawn randomly and independently from  $\mathcal{P}_i$,  $F(\cdot, \zeta_{i})$ is definable and admits a definable conservative field $D_{F_i(\cdot, \zeta_{i})}$. 

\item[(2)] The summation function $f({\bm x})$ is proper. 

\item[(3)] The step-sizes  $\{\eta_{k}\}$ and $\{\theta_{k}\}$ satisfy 
\begin{equation*}
\sum_{i = 0}^{\infty} \eta_k = +\infty, \quad \lim_{k\to +\infty} \frac{\eta_k}{\theta_k} = 0, \quad \lim_{k\to +\infty} \theta_{k}\log(k) =0, \quad \lim_{k\to +\infty} \frac{\theta_{k}^2}{\eta_k}\log(k) =0. 
\end{equation*}
Moreover, the sequence $\{\gamma_k\}$ is diminishing and satisfies
\begin{equation*}
\gamma_0 =1, \quad \lim_{k\to \infty}\gamma_k \log(k) =0, \quad \lim_{k\to \infty}\frac{\theta_{k}}{\gamma_k} =0.  
\end{equation*}
\end{enumerate}
\end{assumpt}

\begin{rmk}
As illustrated in Remark \ref{rmk:01}, definable functions are sufficiently general to enclose the loss functions of nearly all neural networks. Furthermore, \cite{bolte2021conservative} states that the result yielded by AD algorithms is contained in a definable conservative field of definable loss function. Hence, Assumption \ref{Assumption_obj}-(1) is reasonable and mild.     
\end{rmk}

Based on \cite[Corollary 4]{bolte2021conservative},  conservativity remains invariant under both expectation and summation, and definability is also invariant under the same operations. As a result, we can define the conservative field for each $f_{i}$ as 
\begin{equation}\label{eq:defin_cons}
    D_{f_i}({\bm x}) := \bb{E}_{\zeta_{i}\sim \mathcal{P}_{i}}  D_{F_i(\cdot, \zeta_{i})}({\bm x}), 
\end{equation}
and a definable and convex conservative field for $f$ as 
\begin{equation*}
    D_f({\bm x}) :=  \conv\left(\frac{1}{d} \sum_{i = 1}^d D_{f_i}({\bm x}) \right). 
\end{equation*}

\begin{table}
\centering
\fontsize{9}{13}\selectfont
\begin{tabular}{|c|c|c|c|}
\hline
Methods & ${\bm Z}_{k}$ &  ${\bm H}_{k}$ &${\bm E}_{k+1}$ \\
\hline
QSDGD+  & ${\bm X}_{k}$ & $\bb{E}[{\bm G}_{k}|\mathcal{F}_{k}]$ & $C({\bm X}_{k}, {\bm \omega}_{k+1})- {\bm X}_{k}$ \\
\hline
DSM-Unb & $\left[\begin{smallmatrix}{\bm X}_{k}\\
{\bm Y}_{k}\end{smallmatrix}\right]$  & $\bb{E}\left[\begin{smallmatrix}
        {\bm Y}_k + \rho {\bm G}_{k}\\
        \tau ((1-\theta_{k}){\bm Y}_{k} +\theta_{k} {\bm Y}_{k}{\bm W} )- \tau {\bm G}_{k+1}
    \end{smallmatrix} |\ca{F}_k\right]$ & $\left[\begin{smallmatrix}C({\bm X}_{k}, {\bm \omega}_{k+1})- {\bm X}_{k}\\
(1-\tau\eta_{k})(C({\bm Y}_{k}, {\bm \omega}_{k+1})- {\bm Y}_{k})\end{smallmatrix}\right]$  \\
\hline
SignDSGD-Unb & $\left[\begin{smallmatrix}{\bm X}_{k}\\
{\bm Y}_{k}\end{smallmatrix}\right]$  & $\bb{E}\left[\begin{smallmatrix}
      \mathrm{sign}({\bm Y}_k+ \rho {\bm G}_{k}) \\
        \tau ((1-\theta_{k}){\bm Y}_{k} +\theta_{k} {\bm Y}_{k}{\bm W} )- \tau {\bm G}_{k+1}
    \end{smallmatrix} |\ca{F}_k\right]$ & $\left[\begin{smallmatrix}C({\bm X}_{k}, {\bm \omega}_{k+1})- {\bm X}_{k}\\
(1-\tau\eta_{k})(C({\bm Y}_{k}, {\bm \omega}_{k+1})- {\bm Y}_{k})\end{smallmatrix}\right]$\\
\hline
CHOCO-SGD+  & ${\bm X}_{k}$ & $\bb{E}[{\bm G}_{k}|\mathcal{F}_{k}]$ & $\gamma_k C({\bm X}_{k}-\hat{{\bm X}}_{k}, {\bm \omega}_{k+1})-({\bm X}_{k}-\hat{{\bm X}}_{k})$  \\
\hline
BEER+  & ${\bm X}_{k}$ & $\bb{E}[{\bm Y}_{k}|\mathcal{F}_{k}]$   & $\gamma_k C({\bm X}_{k}-\hat{{\bm X}}_{k}, {\bm \omega}_{k+1})-({\bm X}_{k}-\hat{{\bm X}}_{k})$\\
\hline
C-GT+  & ${\bm X}_{k}$ & $\bb{E}[{\bm Y}_{k}|\mathcal{F}_{k}]$   & $\gamma_k C({\bm X}_{k}-\hat{{\bm X}}_{k}, {\bm \omega}_{k+1})-({\bm X}_{k}-\hat{{\bm X}}_{k})$\\
\hline
DSM-Con & $\left[\begin{smallmatrix}{\bm X}_{k}\\
{\bm Y}_{k}\end{smallmatrix}\right]$  & $\bb{E}\left[\begin{smallmatrix}
        {\bm Y}_k + \rho {\bm G}_{k}\\
        \tau ({\bm Y}_{k} +\theta_{k} \hat{\bm Y}_{k+1}({\bm W}-{\bm I}_d))- \tau {\bm G}_{k+1}
    \end{smallmatrix} |\ca{F}_k\right]$ & $\left[\begin{smallmatrix}\gamma_k C({\bm X}_{k}-\hat{{\bm X}}_{k}, {\bm \omega}_{k+1})-({\bm X}_{k}-\hat{{\bm X}}_{k})\\
\gamma_k C({\bm Y}_{k}-\hat{{\bm Y}}_{k}, {\bm \omega}_{k+1})-({\bm Y}_{k}-\hat{{\bm X}}_{k})\end{smallmatrix}\right]$ \\
\hline
SignDSGD-Con & $\left[\begin{smallmatrix}{\bm X}_{k}\\
{\bm Y}_{k}\end{smallmatrix}\right]$  & $\bb{E}\left[\begin{smallmatrix}
      \mathrm{sign}({\bm Y}_k+ \rho {\bm G}_{k}) \\
        \tau ({\bm Y}_{k} +\theta_{k} \hat{\bm Y}_{k+1}({\bm W}-{\bm I}_d))- \tau {\bm G}_{k+1}
    \end{smallmatrix} |\ca{F}_k\right]$ & $\left[\begin{smallmatrix}\gamma_k C({\bm X}_{k}-\hat{{\bm X}}_{k}, {\bm \omega}_{k+1})-({\bm X}_{k}-\hat{{\bm X}}_{k})\\
\gamma_k C({\bm Y}_{k}-\hat{{\bm Y}}_{k}, {\bm \omega}_{k+1})-({\bm Y}_{k}-\hat{{\bm Y}}_{k})\end{smallmatrix}\right]$    \\
\hline
DSGTM-Con & $\left[\begin{smallmatrix}{\bm X}_{k}\\
{\bm Y}_{k}\end{smallmatrix}\right]$  & $\bb{E}\left[\begin{smallmatrix}
        {\bm Y}_k\\
        \tau {\bm V}_{k} - \tau {\bm G}_{k+1}
    \end{smallmatrix} |\ca{F}_k\right]$ & $\left[\begin{smallmatrix}\gamma_k C({\bm X}_{k}-\hat{{\bm X}}_{k}, {\bm \omega}_{k+1})-({\bm X}_{k}-\hat{{\bm X}}_{k})\\
\gamma_k C({\bm Y}_{k}-\hat{{\bm Y}}_{k}, {\bm \omega}_{k+1})-({\bm Y}_{k}-\hat{{\bm Y}}_{k})\end{smallmatrix}\right]$    \\
\hline
\end{tabular}
\caption{Specific choices of ${\bm Z}_{k}$, ${\bm H}_{k}$, ${\bm E}_{k+1}$ for each method in Section \ref{sec:SGD}.}
\label{tab:1}
\end{table}

\begin{lem}[Theorem 5 and 9 in \cite{bolte2021conservative}]\label{lem-app}
Let Assumption \ref{Assumption_obj} hold. Then, 
\begin{enumerate}
\item[(1)] 
$f$ is a Lyapunov function for $D_{f}({\bm x})$, whose stable set is $\mathcal{A}:=\{{\bm x}\in \mathbb{R}^n : {\bm 0} \in D_{f}({\bm x})\}$.
\item[(2)] 
  $\{f({\bm x}): {\bm 0}\in D_{f}({\bm x})\}$ is a finite set. 
\end{enumerate}

\end{lem}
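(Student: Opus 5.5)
The plan is to derive both parts from the properties of conservative fields and definability in \cite{bolte2021conservative}, after first confirming that $f$ and $D_f$ have the structure those results need. By Assumption \ref{Assumption_obj}-(1) and the invariance of conservativity and definability under expectation and finite summation \cite[Corollary 4]{bolte2021conservative}, each $D_{f_i}$ in \eqref{eq:defin_cons} is a definable conservative field for $f_i$. Hence $\frac{1}{d}\sum_i D_{f_i}$ is a definable conservative field for $f$. Taking the convex hull preserves both properties: a convex hull of a conservative field is still conservative, since $\max_{v\in\conv D}\langle \dot\gamma, v\rangle = \max_{v\in D}\langle\dot\gamma,v\rangle$ leaves \eqref{eq:cons} unchanged, and definability is stable under convexification. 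So $D_f$ is a convex-valued, compact-valued, graph-closed, definable conservative field for $f$, and $f$ is path-differentiable and locally Lipschitz.

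\textbf{Part (1).} Let $\gamma$ be any absolutely continuous solution of $\dot{\bm x}\in -D_f({\bm x})$. Path-differentiability gives a chain rule: for a.e.\ $t$, $\frac{\mathrm{d}}{\mathrm{d}t} f(\gamma(t)) = \langle v, \dot\gamma(t)\rangle$ for every $v\in D_f(\gamma(t))$ \cite[Lemma 2/Theorem 1]{bolte2021conservative}. Choosing $v=-\dot\gamma(t)$ yields $\frac{\mathrm{d}}{\mathrm{d}t} f(\gamma(t)) = -\|\dot\gamma(t)\|^2 \le 0$, so $f$ is nonincreasing along trajectories.

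For strict decrease, suppose $\gamma(0)\notin\mathcal{A}$. Because $D_f$ is convex, compact-valued and graph-closed, $\mathrm{dist}({\bm 0},D_f(\cdot))$ is lower semicontinuous, so it is bounded below by some $c>0$ on a ball $\mathbb{B}(\gamma(0),r)$. Suppose $f(\gamma(t))=f(\gamma(0))$ for some $t>0$. Then $\dot\gamma=0$ a.e.\ on $[0,t]$, so $\gamma$ is constant there. But then $0=\dot\gamma(s)\in -D_f(\gamma(0))$, contradicting $\gamma(0)\notin\mathcal{A}$. Therefore $f(\gamma(t))<f(\gamma(0))$ for all $t>0$. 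This is the Lyapunov property with stable set $\mathcal{A}$. The set $\mathcal{A}$ is closed by graph-closedness, and $f$ is continuous.

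\textbf{Part (2).} Since $f$ and $D_f$ are definable, Proposition \ref{prop:finite} (Theorem 5 of \cite{bolte2021conservative}) applies directly: the set of critical values $\{f({\bm x}):{\bm 0}\in D_f({\bm x})\}$ is finite.

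\textbf{Main obstacle.} The delicate step is the first one: checking that the expectation $\mathbb{E}_{\zeta_i} D_{F_i(\cdot,\zeta_i)}$ is a well-defined definable conservative field. This needs integrability and local boundedness conditions, for which I would rely on \cite[Corollary 4]{bolte2021conservative} as stated. The convex-hull step also requires confirming that definability is preserved, and I expect that to be handled by standard o-minimal closure properties.
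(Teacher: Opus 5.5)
Your proposal is correct and takes essentially the same route as the paper, which proves this lemma only by citing Theorems 5 and 9 of \cite{bolte2021conservative}. Part (2) is the same appeal to the nonsmooth Morse--Sard result (Proposition \ref{prop:finite}), and for part (1) you unpack the chain-rule argument behind the cited Lyapunov property: $\frac{\mathrm{d}}{\mathrm{d}t}f(\gamma(t))=-\|\dot\gamma(t)\|^2$ a.e., and equality $f(\gamma(t))=f(\gamma(0))$ forces $\gamma$ to be stationary on $[0,t]$, hence ${\bm 0}\in D_f(\gamma(0))$ (your lower-semicontinuity bound on $\mathrm{dist}({\bm 0},D_f(\cdot))$ is never used, and the definability of the expectation field you flag is a premise the paper itself asserts rather than a gap you introduce).
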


\begin{lem}[Proposition 4.5 in \cite{xiao2023convergence}]\label{lem-app2}
Let Assumption \ref{Assumption_obj} hold. Then,
\begin{enumerate}
\item[(1)]  $\psi({\bm x}, {\bm y}):= f({\bm x}) +\frac{1}{2\tau}\|{\bm y}\|^2$ is a locally Lipschitz Lyapunov function for the differential inclusion 
$\frac{\mathrm{d} ({\bm x}, {\bm y})}{\mathrm{d} t} \in -\conv ( \frac{1}{d}\sum_{i=1}^{d}   [\begin{smallmatrix} {\bm y}+\rho D_{f_i}({\bm x})\\
\tau {\bm y} - \tau D_{f_i}({\bm x})   \end{smallmatrix}] )$ with stable set $\mathcal{A}:=\{({\bm x}, {\bm y}): {\bm 0} \in D_f({\bm x}), {\bm y} =0\}$. 
\item[(2)]   $\psi({\bm x}, {\bm y}):= f({\bm x}) +\frac{1}{\tau}\|{\bm y}\|_1$ is a locally Lipschitz Lyapunov function for
the differential inclusion 
$\frac{\mathrm{d} ({\bm x}, {\bm y})}{\mathrm{d} t} \in -\conv ( \frac{1}{d}\sum_{i=1}^{d} [\begin{smallmatrix} \mathrm{sign}({\bm y}+\rho D_{f_i}({\bm x}))\\
\tau {\bm y} - \tau D_{f_i}({\bm x})   \end{smallmatrix}])$ with stable set $\mathcal{A}:=\{({\bm x}, {\bm y}): {\bm 0} \in D_f({\bm x}), {\bm y} =0\}$. 
\end{enumerate}   
\end{lem}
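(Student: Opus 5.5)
The plan is to verify, for each of the two differential inclusions in Lemma \ref{lem-app2}, the two defining properties of a Lyapunov function. First, $\psi$ is nonincreasing along every trajectory. Second, it strictly decreases whenever the trajectory starts outside the stable set $\mathcal{A}$. Local Lipschitz continuity is immediate: $f$ is locally Lipschitz as a definable path-differentiable function, and $\frac{1}{2\tau}\|{\bm y}\|^2$ and $\frac{1}{\tau}\|{\bm y}\|_1$ are locally Lipschitz.

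The main tool is the chain rule for path-differentiable functions along absolutely continuous curves (Definition of path-differentiability and \cite{bolte2021conservative}). For an absolutely continuous $({\bm x}(t),{\bm y}(t))$, for a.e.\ $t$ and every ${\bm v}\in D_f({\bm x}(t))$ we have
\begin{equation*}
\frac{\mathrm d}{\mathrm dt}f({\bm x}(t))=\langle {\bm v},\dot{\bm x}(t)\rangle .
\end{equation*}
The same identity holds for $\|{\bm y}\|_1$ with any element of $\mathrm{sign}({\bm y})$, since the convex subdifferential is a conservative field.

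For case (1), write $\dot{\bm x}=-({\bm y}+\rho{\bm v})$ and $\dot{\bm y}=-\tau({\bm y}-{\bm v})$ with the same ${\bm v}\in D_f({\bm x})$. Because the inclusion is the convex hull of an average of stacked pairs, an element of it is a convex combination of pairs sharing the index $i$. Hence the first and second components use a common ${\bm v}$ in $D_f$, and this coupling is the point to check. Choosing this ${\bm v}$ in the chain rule gives
\begin{equation*}
\frac{\mathrm d}{\mathrm dt}\psi=-\langle{\bm v},{\bm y}+\rho{\bm v}\rangle-\langle{\bm y},{\bm y}-{\bm v}\rangle=-\rho\|{\bm v}\|^2-\|{\bm y}\|^2\le 0 .
\end{equation*}

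For case (2), write $\dot{\bm x}=-{\bm s}$ with ${\bm s}\in\mathrm{sign}({\bm y}+\rho{\bm v})$, and $\dot{\bm y}=-\tau({\bm y}-{\bm v})$. The derivative becomes $-\langle{\bm v},{\bm s}\rangle-\langle{\bm u},{\bm y}-{\bm v}\rangle$ for any ${\bm u}\in\mathrm{sign}({\bm y})$. Following \cite{xiao2023convergence}, the key step is to use the freedom in choosing ${\bm u}$ on the coordinates where $y_j=0$: there we take $u_j=s_j$. On the remaining coordinates we do a coordinatewise case analysis on the signs of $y_j$ and $y_j+\rho v_j$. This should yield
\begin{equation*}
\frac{\mathrm d}{\mathrm dt}\psi\le-\|{\bm y}\|_1-c\,\mathrm{dist}(\cdot)\le 0 .
\end{equation*}
A cleaner route is to show the integrand equals $-\|{\bm y}\|_1-\sum_j|v_j|\mathbb 1\{\cdot\}$ through the identity $s_j(y_j+\rho v_j)=|y_j+\rho v_j|$. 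I expect this coordinatewise bookkeeping to be the main obstacle.

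Strict decrease off $\mathcal{A}$ is argued by contradiction. If $\psi(\gamma(t))=\psi(\gamma(0))$ for some $t>0$, then the nonpositive derivative vanishes a.e.\ on $[0,t]$. This forces ${\bm y}(s)=0$ a.e.\ on $[0,t]$, so $\dot{\bm y}=0$ and thus $\tau{\bm v}=0$ with ${\bm v}\in D_f({\bm x}(s))$. Since $\dot{\bm x}=-({\bm y}+\rho{\bm v})=0$ in case (1), and $\dot{\bm x}\in-\mathrm{sign}(0)$ in case (2), we need ${\bm x}$ stationary. In case (2) the sign term could be nonzero, and ${\bm x}$ stationary is forced by $\dot{\bm y}=0$ holding on an interval: the derivative of $f({\bm x})$ equals $-\langle{\bm 0},{\bm s}\rangle=0$. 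Then graph-closedness of $D_f$ gives ${\bm 0}\in D_f({\bm x}(0))$ and ${\bm y}(0)=0$ by continuity, i.e.\ $\gamma(0)\in\mathcal{A}$, a contradiction.
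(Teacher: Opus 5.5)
\textbf{Case (1) is fine.} The map ${\bm v}\mapsto({\bm y}+\rho{\bm v},\,\tau{\bm y}-\tau{\bm v})$ is affine. So every element of the right-hand side equals $-({\bm y}+\rho\bar{\bm v},\,\tau{\bm y}-\tau\bar{\bm v})$ for a single $\bar{\bm v}\in\conv(\frac1d\sum_i D_{f_i}({\bm x}))=D_f({\bm x})$, and the chain rule with this $\bar{\bm v}$ gives $-\rho\|\bar{\bm v}\|^2-\|{\bm y}\|^2$.

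\textbf{The gap is in case (2).} There you reuse the same reduction and write $\dot{\bm x}=-{\bm s}$, ${\bm s}\in\mathrm{sign}({\bm y}+\rho{\bm v})$, $\dot{\bm y}=-\tau({\bm y}-{\bm v})$ with one common ${\bm v}$. Since $\mathrm{sign}$ is not affine, this is not what the inclusion gives. The first block of an element of the right-hand side is a convex combination $\sum_\ell\lambda_\ell{\bm s}_\ell$, where ${\bm s}_\ell\in\mathrm{sign}({\bm y}+\rho{\bm v}_\ell)$ for \emph{different} ${\bm v}_\ell$ (coming from different agents and from the convex hull). The second block only sees $\bar{\bm v}=\sum_\ell\lambda_\ell{\bm v}_\ell$. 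In general $\sum_\ell\lambda_\ell{\bm s}_\ell$ need not lie in $\mathrm{sign}({\bm y}+\rho\bar{\bm v})$. Your coordinatewise case analysis needs exactly that $s_j$ carries the sign of $y_j+\rho v_j$ for the same ${\bm v}$ that drives $\dot{\bm y}$.

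\textbf{This cannot be patched: for $\rho>0$, statement (2) is false as written.} The paper itself only cites \cite{xiao2023convergence} and gives no argument for this averaged, convexified inclusion. Here is a counterexample.
\begin{itemize}
\item Take $n=1$, $d=2$, $\rho=\tau=1$, and $f_1(x)=-2x$, $f_2(x)=10x$ near $0$ (add $100\max\{|x|-1,0\}^2$ to both for coercivity). Then $D_{f_1}=\{-2\}$, $D_{f_2}=\{10\}$ and $D_f=\{4\}$ there.
\item For $y\in(-10,2)$ the right-hand side is the singleton $\{(0,\,4-y)\}$, because $\frac12(\mathrm{sign}(y-2)+\mathrm{sign}(y+10))=0$ and $\frac12((y+2)+(y-10))=y-4$.
\item Hence $x\equiv 0$, $y(t)=4-3e^{-t}$ solves the inclusion from $(0,1)$, and $\psi(\gamma(t))=4-3e^{-t}>1=\psi(\gamma(0))$ for small $t>0$.
\end{itemize}
The same failure occurs already for $d=1$ with $f(x)=\max\{-2x,10x\}$ at $x=0$, where the convex hull mixes ${\bm v}=-2$ and ${\bm v}=10$.

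\textbf{What your argument does prove} is (2) with $\rho=0$. Then every ${\bm s}_\ell$ lies in the convex box $\mathrm{sign}({\bm y})$, so ${\bm u}:=\sum_\ell\lambda_\ell{\bm s}_\ell$ is admissible in the chain rule for $\|\cdot\|_1$ and gives $\frac{\mathrm d}{\mathrm dt}\psi=-\|{\bm y}\|_1$. The strict-decrease step then goes through. In that step you do not need stationarity of ${\bm x}$, and you cannot obtain it in case (2). It suffices that ${\bm 0}\in D_f({\bm x}(s))$ for a.e.\ $s$; together with continuity of ${\bm x}$ and the closed graph of $D_f$, this gives ${\bm 0}\in D_f({\bm x}(0))$.
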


Lemmas \ref{lem-app} and \ref{lem-app2} present the Lyapunov functions corresponding to different differential inclusions, which is a key to verifying that the methods in Section \ref{sec:SGD} satisfy Assumption \ref{Assumption_framework}.


\begin{prop}\label{prop:qdsgd}
Suppose Assumption \ref{Assumption_obj} holds,  then we have
\begin{enumerate}
\item \eqref{unb:dsgd}, \eqref{unb:dsm} and \eqref{unb:signdsgd} fit into sub-framework \eqref{eq:uncom}.
\item \eqref{con:dsgd}, \eqref{con:dsgt}, \eqref{con:dsgt(cgt)}, \eqref{con:dsm}, \eqref{con:signdsgd} and \eqref{con:dsgtm} fit into sub-framework \eqref{eq:contractive}.
\item All the aforementioned methods satisfy Assumption \ref{Assumption_framework}.
\end{enumerate}
\end{prop}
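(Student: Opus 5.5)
The plan is to handle each method by explicit rewriting, using Table~\ref{tab:1} as the template, and then to verify the four items of Assumption~\ref{Assumption_framework} uniformly. For parts 1 and 2, stack the variables as ${\bm Z}_k$ as in Table~\ref{tab:1}. Set ${\bm H}_k := \bb{E}[\,\cdot\,|\ca{F}_k]$ of the full descent direction and $\Xi_{k+1}$ := (descent direction) $-{\bm H}_k$. Then check algebraically that the update coincides with \eqref{eq:uncom} or \eqref{eq:contractive}.

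For the unbiased methods \eqref{unb:dsgd}, \eqref{unb:dsm}, \eqref{unb:signdsgd}, the ${\bm X}$-row is immediate. For the ${\bm Y}$-row we expand
\begin{equation*}
(1-\tau\eta_k)[(1-\theta_k){\bm Y}_k+\theta_k{\bm Y}_k{\bm W}+\theta_k{\bm E}^y_{k+1}({\bm W}-\Diag({\bm W}))]+\tau\eta_k{\bm G}_{k+1}.
\end{equation*}
Moving the $-\tau\eta_k(\cdot)$ part into the descent term gives exactly the ${\bm H}_k$ of Table~\ref{tab:1}. The compression error is $(1-\tau\eta_k)(C({\bm Y}_k,{\bm\omega}_{k+1})-{\bm Y}_k)$, which remains a bounded martingale difference. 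This is the only place where the form of \eqref{eq:uncom} is bent slightly, and the proofs of Proposition~\ref{prop:consensus_unb} and Theorem~\ref{thm:convergence} only use the martingale-difference and boundedness properties.

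For the contractive methods, the identity $\hat{\bm X}_{k+1}={\bm X}_k+{\bm E}_{k+1}$ turns $\theta_k\hat{\bm X}_{k+1}({\bm W}-{\bm I}_d)$ into $\theta_k{\bm X}_k({\bm W}-{\bm I}_d)+\theta_k{\bm E}_{k+1}({\bm W}-{\bm I}_d)$, as already shown for \eqref{con:dsgd}. The same identity applies row-wise to $\hat{\bm Y}$. For C-GT+, the $\hat{\bm X}$ recursion is rewritten with effective step $\alpha_x\gamma_k$, which still satisfies \eqref{eq:gammak} up to a constant. One must then check that Lemma~\ref{lem:noise_to_0} adapts, with ${\bm E}_{k+1}$ relative to $\hat{\bm X}_k$.

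For BEER+, DSGTM-Con, and the like, the gradient-tracking rows ${\bm Y}_{k+1}={\bm Y}_k+\cdots+{\bm G}_{k+1}-{\bm G}_k$ carry an $O(1)$ increment ${\bm G}_{k+1}-{\bm G}_k$ rather than $\eta_k$ times something. Here I would not put ${\bm Y}$ into ${\bm Z}$. Instead I would take ${\bm Z}_k={\bm X}_k$ with ${\bm H}_k$ built from ${\bm Y}_k$, and use the tracking invariant ${\bm Y}_k{\bm 1}_d={\bm G}_k{\bm 1}_d$, obtained by induction from ${\bm Y}_0={\bm G}_0$ and $({\bm W}-{\bm I}_d){\bm 1}_d=0$. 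This gives
\begin{equation*}
\tfrac1d{\bm H}_k{\bm 1}_d=\tfrac1d\bb{E}[{\bm G}_k|\ca{F}_k]{\bm 1}_d\in \conv\Bigl(\tfrac1d\textstyle\sum_i D_{f_i}({\bm x}_{i,k})\Bigr).
\end{equation*}
For DSGTM-Con the same invariant yields ${\bm Y}_k{\bm 1}_d={\bm V}_k{\bm 1}_d$, so the pair $({\bm X},{\bm V})$ behaves like the momentum scheme.

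For part 3, item (1) follows with $\Phi_i=D_{f_i}$ for the SGD and tracking methods. For the momentum and sign methods, $\Phi_i({\bm x},{\bm y})$ is the stacked map in Lemma~\ref{lem-app2}. The $\theta_k$-dependent pieces in ${\bm H}_k$ lie within $O(\theta_k)$ of the limit map and are absorbed into $\epsilon_k\to0$. The sign map is graph-closed and bounded, and $\bb{E}$ of a conservative field is taken via \eqref{eq:defin_cons}. Local boundedness of conservative fields gives boundedness of ${\bm H}_k$ whenever ${\bm Z}_k$ is bounded. Item (2) is exactly Lemma~\ref{lem-app} with $\psi=f$, or Lemma~\ref{lem-app2}, where finiteness of $\psi(\ca{A})=f(\{{\bm 0}\in D_f\})$ comes from Proposition~\ref{prop:finite}. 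Item (3) holds because $\Xi_{k+1}={\bm G}-\bb{E}[{\bm G}|\ca{F}_k]$ is a martingale difference by the independence in Assumption~\ref{Assumption_obj}(1), and it is bounded by local boundedness. Item (4) is Assumption~\ref{Assumption_obj}(3) verbatim.

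The main obstacle is the gradient-tracking family. There, columns of ${\bm H}_k$ are not in $\Phi_i({\bm z}_{i,k})$ individually, and only the averaged inclusion holds. That average involves ${\bm G}_{k}$, which is evaluated at the current ${\bm X}_k$ but is itself random, so the conditional expectation must be handled carefully. Boundedness of ${\bm Y}_k$ must also be extracted from Assumption~\ref{asp:global_stability} on the full state rather than on ${\bm X}$ alone. I would state this explicitly as part of the boundedness hypothesis.
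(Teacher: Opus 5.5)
Apart from DSGTM-Con, your plan matches the paper's own verification through Tables~\ref{tab:1} and~\ref{tab:2}. That covers the same choices of ${\bm Z}_k,{\bm H}_k,{\bm E}_{k+1}$, the $(1-\tau\eta_k)$-scaled compression error in the unbiased momentum rows, and ${\bm Z}_k={\bm X}_k$ plus the tracking invariant for BEER+ and C-GT+. It also uses Lemmas~\ref{lem-app} and~\ref{lem-app2} for item (2). There is one small omission. In the momentum and sign methods, the ${\bm Y}$-row of ${\bm H}_k$ contains ${\bm G}_{k+1}$, which is evaluated at ${\bm X}_{k+1}$ rather than ${\bm X}_k$. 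This one-step lag, and not only the $\theta_k$-terms, must be absorbed into $\epsilon_k$, which is why the paper takes $\epsilon_k=\|{\bm X}_k-{\bm X}_{k+1}\|$.

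The genuine gap is DSGTM-Con.

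\textbf{Why ${\bm Z}_k={\bm X}_k$ fails.} You group DSGTM-Con with BEER+ as carrying an $O(1)$ tracking increment. Its increment is actually ${\bm V}_{k+1}-{\bm V}_k=-\eta_k(\tau{\bm V}_k-\tau{\bm G}_{k+1})$, which is $O(\eta_k)$. With ${\bm Z}_k={\bm X}_k$, the invariant makes the averaged direction $\frac{1}{d}{\bm Y}_k{\bm 1}_d=\frac{1}{d}{\bm V}_k{\bm 1}_d$. This is an exponentially weighted average of past stochastic subgradients, and it evolves on the same $\eta_k$-timescale as $\frac{1}{d}{\bm X}_k{\bm 1}_d$. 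So it does not lie in $\conv(\frac{1}{d}\sum_i D_{f_i}^{\epsilon_k}({\bm x}_{i,k}))$ for any diminishing $\epsilon_k$, and Assumption~\ref{Assumption_framework}(1) fails with $\Phi_i=D_{f_i}$. Enlarging $\Phi_i({\bm x})$ to contain this momentum destroys the Lyapunov property in (2).

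\textbf{Why the $({\bm X},{\bm V})$ fallback fails.} Treating $({\bm X},{\bm V})$ as the momentum pair cannot be done inside \eqref{eq:contractive}, because ${\bm V}$ is never communicated. Writing its row in that form forces the missing mixing term into ${\bm H}_k$, and that term is $\frac{\theta_k}{\eta_k}$ times a non-consensual matrix, so it is unbounded. Proposition~\ref{prop:consensus_con} would then assert ${\bm V}_{\perp,k}\to 0$, which is false for heterogeneous $f_i$.

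\textbf{The fix.} As in the paper, stack ${\bm Z}_k=[{\bm X}_k;{\bm Y}_k]$. ${\bm Y}$ is mixed, and because its increment is $O(\eta_k)$, its row fits \eqref{eq:contractive} with ${\bm H}$-row $\tau{\bm V}_k-\tau{\bm G}_{k+1}$. Given ${\bm Y}_0{\bm 1}_d={\bm V}_0{\bm 1}_d$, the invariant turns the averaged direction into $\frac{1}{d}\sum_i[{\bm y}_{i,k};\,\tau{\bm y}_{i,k}-\tau{\bm g}_{i,k+1}]$, in conditional expectation. This gives $\Phi_i({\bm x},{\bm y})=[{\bm y};\,\tau{\bm y}-\tau D_{f_i}({\bm x})]$ and $\psi({\bm x},{\bm y})=f({\bm x})+\frac{1}{2\tau}\|{\bm y}\|^2$, which is Lemma~\ref{lem-app2}(1) with $\rho=0$.
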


In Table \ref{tab:1}, we show the specific choices of ${\bm Z}_{k}$, ${\bm H}_{k}$, ${\bm E}_{k}$  for each method in Section \ref{sec:SGD}, thereby verifying the first two items of Proposition \ref{prop:qdsgd}. In Table \ref{tab:2}, we illustrate the specific choices of $\Phi_{i}$, $\psi$, $\mathcal{A}$ and $\epsilon_k$ for each method in Section \ref{sec:SGD}. Combining with the local boundedness of $D_{F_i(\cdot, \zeta_{i})}$,  Lemma \ref{lem-app},  and Lemma \ref{lem-app2}, we can directly verify that all the methods satisfy Assumption \ref{Assumption_framework}.

\begin{table}
\centering
\fontsize{9}{13}\selectfont
\begin{tabular}{|c|c|c|c|c|}
\hline
Methods  & $ \Phi_{i} $ & $\psi$  &  $\mathcal{A}$ & $\epsilon_{k}$ \\
\hline
QSDGD+  & $D_{f_i}$  & $f$ & $\{{\bm x} \in \Rn: {\bm 0} \in D_f({\bm x})\}$ & 0  \\
\hline
DSM-Unb & $\left[\begin{smallmatrix} {\bm y}+\rho D_{f_i}({\bm x})\\
\tau {\bm y} - \tau D_{f_i}({\bm x})   \end{smallmatrix}\right]$ & $f({\bm x}) +\frac{1}{2\tau}\|{\bm y}\|^2$ & $\{({\bm x}, {\bm y}): {\bm 0} \in D_f({\bm x}), {\bm y} =0\}$ & $\|{\bm X}_{k} - {\bm X}_{k+1}\|$ \\
\hline
SignDSGD-Unb & $\left[\begin{smallmatrix} \mathrm{sign}({\bm y}+\rho D_{f_i}({\bm x}))\\
\tau {\bm y} - \tau D_{f_i}({\bm x})   \end{smallmatrix}\right]$ & $f({\bm x}) +\frac{1}{\tau}\|{\bm y}\|_{1}$ & $\{({\bm x}, {\bm y}): {\bm 0} \in D_f({\bm x}), {\bm y} =0\}$  & $\|{\bm X}_{k} - {\bm X}_{k+1}\|$ \\
\hline
CHOCO-SGD+  & $D_{f_i}$  & $f$ & $\{{\bm x} \in \Rn: {\bm 0} \in D_f({\bm x})\}$ & 0\\
\hline
BEER+   & $D_{f_i}$  & $f$ & $\{{\bm x} \in \Rn: {\bm 0} \in D_f({\bm x})\}$ & 0\\
\hline
C-GT+   & $D_{f_i}$  & $f$ & $\{{\bm x} \in \Rn: {\bm 0} \in D_f({\bm x})\}$ & 0\\
\hline
DSM-Con & $\left[\begin{smallmatrix} {\bm y}+\rho D_{f_i}({\bm x})\\
\tau {\bm y} - \tau D_{f_i}({\bm x})   \end{smallmatrix}\right]$ & $f({\bm x}) +\frac{1}{2\tau}\|{\bm y}\|^2$ & $\{({\bm x}, {\bm y}): {\bm 0} \in D_f({\bm x}), {\bm y} =0\}$ & $\|{\bm X}_{k} - {\bm X}_{k+1}\|$ \\
\hline
SignDSGD-Con & $\left[\begin{smallmatrix} \mathrm{sign}({\bm y}+\rho D_{f_i}({\bm x}))\\
\tau {\bm y} - \tau D_{f_i}({\bm x})   \end{smallmatrix}\right]$ & $f({\bm x}) +\frac{1}{\tau}\|{\bm y}\|_{1}$ & $\{({\bm x}, {\bm y}): {\bm 0} \in D_f({\bm x}), {\bm y} =0\}$ & $\|{\bm X}_{k} - {\bm X}_{k+1}\|$  \\
\hline
DSGTM-Con & $\left[\begin{smallmatrix} {\bm y}\\
\tau {\bm y} - \tau D_{f_i}({\bm x})   \end{smallmatrix}\right]$ & $f({\bm x}) +\frac{1}{2\tau}\|{\bm y}\|^2$ & $\{({\bm x}, {\bm y}): {\bm 0} \in D_f({\bm x}), {\bm y} =0\}$ & $\|{\bm X}_{k} - {\bm X}_{k+1}\|$ \\
\hline
\end{tabular}
\caption{Specific choices of $\Phi_{i}$, $\psi$, $\mathcal{A}$ and $\epsilon_k$ 
 for each method in Section \ref{sec:SGD}.}
\label{tab:2}
\end{table}

By applying Theorem \ref{thm:convergence}, we establish the convergence guarantees for above-mentioned methods. 

\begin{theo}\label{thm:applications}
Suppose Assumption \ref{Assumption_obj} holds, and the sequence $\{{\bm X}_{k}\}$ (and $\{{\bm Y}_{k}\}$, if defined)  is  generated by one of \eqref{unb:dsgd}, \eqref{unb:dsm}, \eqref{unb:signdsgd},\eqref{con:dsgd}, \eqref{con:dsgt}, \eqref{con:dsgt(cgt)}, \eqref{con:dsm}, \eqref{con:signdsgd} and \eqref{con:dsgtm}. Assume that the sequence $\{{\bm X}_{k}\}$ (and $\{{\bm Y}_{k}\}$, if defined) is bounded. Then, any cluster point of $\{{\bm X}_{k}\}$ is a $D_f$-critical point of \eqref{Prob_DOP}, and the sequence  $\{f({\bm x}_{i,k}): k \in \bb{N}\}$ converges for each $i\in [d]$.
\end{theo}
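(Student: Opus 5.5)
The proof reduces Theorem \ref{thm:applications} to Theorem \ref{thm:convergence}, using Proposition \ref{prop:qdsgd} together with Tables \ref{tab:1} and \ref{tab:2}.

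\textbf{Step 1: casting each method into the framework.} For each method we take ${\bm Z}_k$, ${\bm H}_k$ and ${\bm E}_{k+1}$ as listed in Table \ref{tab:1}. We then set $\Xi_{k+1}$ equal to the stochastic direction minus its conditional expectation ${\bm H}_k$. For example, $\Xi_{k+1}={\bm G}_k-\mathbb{E}[{\bm G}_k|\mathcal{F}_k]$ for QSDGD+ and CHOCO-SGD+, and the analogous stacked quantity for the momentum methods. With these choices, Proposition \ref{prop:qdsgd}(1)--(2) places each method in \eqref{eq:uncom} or \eqref{eq:contractive}.

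\textbf{Step 2: verifying the hypotheses.} By assumption $\{{\bm X}_k\}$, and $\{{\bm Y}_k\}$ when present, are bounded, so Assumption \ref{asp:global_stability} holds for ${\bm Z}_k$. Proposition \ref{prop:qdsgd}(3) with Table \ref{tab:2} gives Assumption \ref{Assumption_framework}. Items (1)--(2) follow from the following facts:
\begin{itemize}
\item the expected conservative fields $D_{f_i}$ in \eqref{eq:defin_cons} are locally bounded and graph-closed;
\item $\epsilon_k=\|{\bm X}_k-{\bm X}_{k+1}\|\to 0$, because $\theta_k,\eta_k\to 0$ and all increments are bounded;
\item Lemma \ref{lem-app} and Lemma \ref{lem-app2} supply the Lyapunov function $\psi$ and its stable set $\mathcal{A}$, and ensure $\psi(\mathcal{A})$ is finite.
\end{itemize}
For Lemma \ref{lem-app2} we note that $\psi(\mathcal{A})=f(\{{\bm x}:0\in D_f({\bm x})\})$, since ${\bm y}=0$ on $\mathcal{A}$. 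Item (3) holds because the noise is bounded on bounded sets, by local boundedness of $D_{F_i(\cdot,\zeta)}$. Item (4) is exactly Assumption \ref{Assumption_obj}(3).

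\textbf{Step 3: applying Theorem \ref{thm:convergence}.} The theorem gives $\mathrm{dist}({\bm Z}_k,\{{\bm z}{\bm 1}^\top:{\bm z}\in\mathcal{A}\})\to 0$. Hence every cluster point of $\{{\bm X}_k\}$ has the form ${\bm x}^*{\bm 1}_d^\top$ with $0\in D_f({\bm x}^*)$, which is a $D_f$-critical point by Definition \ref{Defin_critical_points}. The theorem also gives convergence of $\psi({\bm z}_{i,k})$ for each $i$.

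\textbf{Step 4: from $\psi$ to $f$.} For the plain SGD-type methods $\psi=f$, so convergence of $\{f({\bm x}_{i,k})\}$ is immediate. For the momentum and sign methods, $\psi=f+\frac{1}{2\tau}\|{\bm y}\|^2$ or $\psi=f+\frac{1}{\tau}\|{\bm y}\|_1$. Here I would use that ${\bm Z}_k$ approaches the consensus set with ${\bm y}$-component in $\mathcal{A}$, so ${\bm y}_{i,k}\to 0$. The ${\bm y}$-term therefore vanishes and $f({\bm x}_{i,k})$ converges to the same limit as $\psi({\bm z}_{i,k})$.

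\textbf{Main obstacle.} The hardest part is justifying in Step 1 that the noise and compression terms satisfy the framework's structural requirements. For the unbiased sub-framework, ${\bm E}_{k+1}$ must be a martingale difference. In DSM-Unb and SignDSGD-Unb the ${\bm Y}$ block carries the factor $(1-\tau\eta_k)$, and the ${\bm G}_{k+1}$ term is evaluated at ${\bm x}_{i,k+1}$, which is not $\mathcal{F}_k$-measurable. The fix is to split ${\bm G}_{k+1}$ into a conditional mean plus a martingale difference, and absorb the mismatch between $D_{f_i}({\bm x}_{i,k+1})$ and $D_{f_i}({\bm x}_{i,k})$ into the $\epsilon_k$-enlargement $\Phi_i^{\epsilon_k}$. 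This is why $\epsilon_k=\|{\bm X}_k-{\bm X}_{k+1}\|$ appears in Table \ref{tab:2}.

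For the gradient-tracking methods (BEER+, C-GT+, DSGTM-Con), we also need $\frac{1}{d}{\bm Y}_k{\bm 1}_d$ to track $\frac{1}{d}{\bm G}_k{\bm 1}_d$. Because $({\bm W}-{\bm I}_d){\bm 1}_d=0$ kills the mixing term, the standard telescoping identity gives $\frac{1}{d}{\bm Y}_k{\bm 1}_d=\frac{1}{d}{\bm G}_k{\bm 1}_d$ (respectively $\frac{1}{d}{\bm V}_k{\bm 1}_d$). This places the averaged direction in $\conv(\frac{1}{d}\sum_i D_{f_i})$ up to noise, as Assumption \ref{Assumption_framework}(1) requires.
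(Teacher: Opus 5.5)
Your proposal is correct and follows the paper's own route. The paper proves Theorem \ref{thm:applications} simply by invoking Theorem \ref{thm:convergence}, after Proposition \ref{prop:qdsgd} (via Tables \ref{tab:1} and \ref{tab:2} and Lemmas \ref{lem-app}, \ref{lem-app2}) has placed each method in \eqref{eq:uncom} or \eqref{eq:contractive} and verified Assumption \ref{Assumption_framework}, and your Step 4 makes explicit a step the paper leaves implicit: since $\mathcal{A}$ forces the ${\bm y}$-component to vanish, ${\bm y}_{i,k}\to 0$, which turns convergence of $\psi({\bm z}_{i,k})$ into convergence of $f({\bm x}_{i,k})$ for the momentum and sign methods.
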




\section{Numerical Experiments}\label{sec:numeric}

In this section, we present preliminary numerical experiments to evaluate the performance of our proposed framework, which encompasses efficient decentralized stochastic subgradient-type methods with communication compression. Our numerical comparisons are two-fold: first, we compare existing methods with their nonsmooth extensions under our framework; second, we compare these nonsmooth extensions with newly developed methods adapted to our framework. 

All numerical experiments  are conducted on a platform equipped with two Intel(R) Xeon(R) Gold 5317 CPUs ($@$ 3.00GHz and 512GB RAM) and eight NVIDIA GeForce RTX 4090 GPUs, running Ubuntu 20.04. All decentralized algorithms are implemented in Python 3.8 and PyTorch 1.13.1, using NCCL 2.14.3 (CUDA 11.7) as the communication backend.

\subsection{Testing problem and implementation details}

We train ResNet-20 \cite{wen2016learning} models in a decentralized manner on the CIFAR-10 image classification task. It is worth noting that the ResNet-20 neural network employs ReLU
as its activation function, resulting in a non-Clarke-regular but definable loss function.

\textbf{Setup.}
In the first group of numerical comparisons, the decentralized network topology is configured as a ring structure with 8 agents, consistent with the experimental setup in \cite{koloskova2019decentralized, singh2021squarm}. In the second comparison group, we use an Erd\H{o}s-R\'{e}nyi (E.R.)  random graph topology \cite{nachmias2008critical} with 8 agents to evaluate the performance in a more general decentralized network setting. The mixing matrix is chosen as the classical Metropolis constant edge weight matrix \cite{xiao2006distributed}. Moreover, we split the original dataset evenly into 8 parts,  distributing each part to one agent as its local dataset. At the beginning of each training epoch, we reshuffle the local dataset of each agent and form batches of size $128$. Training is stopped at epoch 200. All compared methods are executed five times with varying random seeds.

Furthermore, the choices for the unbiased compression operator and the contractive compression operator in our experiments are as follows:

\begin{itemize}
\item \textit{Unbiased compression}: The $8$-bit random quantization operator $Q_{2^8}$ from Example \ref{exp:0}.

\item  \textit{Contractive compression}: The $\mathrm{Random}$-$10 \%$ and $\mathrm{Top}$-$10 \%$  sparsification operators from Example \ref{exa:1} and rescaled $8$-bits random quantization operator $\frac{1}{1+ \min\{d/2^{16}, \sqrt{d}/2^8\}}Q_{2^8}$ (abbreviated as rescaled $Q_{2^8}$).
\end{itemize}

\textbf{Implementation details.}
A common strategy for updating $\{\eta_{k}\}$ is to decay it at specific milestones while keeping it constant otherwise. Meanwhile, $\{\theta_k\}$ remains constant, and $\{\gamma_k\}$ is typically fixed to $1.0$ in existing works \cite{koloskova2019decentralized, singh2021squarm, zhao2022beer} under the smooth setting. To meet the requirement in Assumption \ref{Assumption_obj}, namely that the sequences $\{(\eta_{k}, \theta_{k}, \gamma_{k})\}$ form a three-timescale scheme and are of order $o(\frac{1}{\log(k)})$, we propose another update strategy for $\{(\eta_{k}, \theta_{k}, \gamma_{k})\}$. Empirically, starting with initial $\eta_0$, we warm up $\{\eta_k\}$ in the first five epochs to obtain $\eta_{\text{warm}}$ in both strategies.

\begin{enumerate}
\item \textbf{Strategy 1:} 
\begin{equation*}
\begin{aligned}
\eta_{k}^{(1)} & = \begin{cases}
\eta_{\text{warm }}, & 5 \leq k\cdot \text{number of batches per epoch} < 100,\\
0.1\eta_{\text{warm }}, & 100 \leq k\cdot \text{number of batches per epoch} < 180,\\
0.01 \eta_{\text{warm }}, & 180 \leq k\cdot \text{number of batches per epoch} \leq 200,\\
\end{cases}\\
\theta^{(1)}_k & \equiv \theta_0, \quad \gamma^{(1)}_k  \equiv 1.0. \\
\end{aligned}
\end{equation*}
This means the step size is decayed by a factor of 0.1 at the 100th and 180th epochs.
\item \textbf{Strategy 2:}
\begin{equation*}
\begin{aligned}
\eta_{k}^{(2)}  & = \begin{cases} 
\eta_{k}^{(1)}, & 5 \leq k\cdot \text{number of batches per epoch} < 100,\\
\frac{\eta_{k}^{(1)}}{\log(k-100 +e)^{1.01}}, & 100 \leq k\cdot \text{number of batches per epoch} < 200,\\
\end{cases}\\
\theta^{(2)}_{k} & =\theta_0 \eta^{(2)}_{k}/(\eta^{(2)}_{k}\log(k))^{s}, s \in (0, \frac{1}{2}), \\
\gamma^{(2)}_k & = \theta^{(2)}_{k}/(\theta^{(2)}_{k}\log(k))^{s}, s \in (0, \frac{1}{2}).\\
\end{aligned}
\end{equation*}
\end{enumerate}
By default, we set $s = 0.25$ in our experiments. Additionally, the fine-tuned hyper-parameters $\eta_0$ and $\theta_0$ for different methods and compression operators are presented in Table \ref{tab:5} and \ref{tab:6}. The momentum parameter $\tau$ in DSM-Unb(Con), DSGTM-Unb(Con) and SignDSGD-Unb(Con) is set as $\tau= \frac{0.1}{\eta_0}$ by default.

\begin{table}
\centering
\begin{tabular}{|c|c|c|c|}
\hline
Compression operator  & Method & Learning rate $\eta_0$  & Consensus step-size $\theta_0$ \\
\hline
\multirow{5}{*}{Rescaled $Q_{2^8}$}    & CHOCO-SGD(+) & 0.06  & 0.2 \\
\cline{2-4}
& BEER(+) & 0.06 & 0.2 \\
\cline{2-4}
& DSGTM-Con & 0.15 & 0.2 \\
\cline{2-4}
& DSM-Con & 0.06 & 0.2 \\
\cline{2-4}
& SignDSGD-Con & 0.00005 & 0.2 \\
\hline
\multirow{5}{*}{$\mathrm{Random}$-$10\%$}  & CHOCO-SGD(+) & 0.075  & 0.075 \\
\cline{2-4}
& BEER(+) & 0.02 & 0.02 \\
\cline{2-4}
& DSGTM-Con & 0.02 & 0.02 \\
\cline{2-4}
& DSM-Con & 0.2 & 0.075 \\
\cline{2-4}
& SignDSGD-Con & 0.0002 & 0.075 \\
\hline
\multirow{5}{*}{$\mathrm{Top}$-$10\%$}  & CHOCO-SGD(+) & 0.1  & 0.15 \\
\cline{2-4}
& BEER(+) & 0.06 & 0.2 \\
\cline{2-4}
& DSGTM-Con & 0.1 & 0.15 \\
\cline{2-4}
& DSM-Con & 0.1 & 0.15 \\
\cline{2-4}
& SignDSGD-Con & 0.00005 & 0.15 \\
\hline
\end{tabular}
\caption{Tuned hyper-parameters of CHOCO-SGD+, BEER+, DSGTM-Con, DSM-Con and SignDSGD-Con for training ResNet-20 on CIFAR-10, corresponding to the ring/E.R. topology with 8 agents.}
\label{tab:5}
\end{table}

\begin{table}
\centering
\begin{tabular}{|c|c|c|c|}
\hline
Compression operator  & Method & Learning rate $\eta_0$  & Consensus step-size $\theta_0$ \\
\hline
\multirow{4}{*}{$Q_{2^8}$}    & QSDGD+ & 0.1  & 0.15 \\
\cline{2-4}
& DSGTM-Unb & 0.05 & 0.0075\\
\cline{2-4}
& DSM-Unb & 0.05 & 0.075 \\
\cline{2-4}
& SignDSGD-Unb & 0.0001 & 0.15 \\
\hline
\end{tabular}
\caption{Tuned hyper-parameters of QSDGD+, DSGTM-Unb, DSM-Unb and SignDSGD-Unb for training ResNet-20 on CIFAR-10, corresponding to the ring/E.R. topology with 8 agents.}
\label{tab:6}
\end{table}

\subsection{Performance comparison between existing methods and their nonsmooth extensions under our framework}\label{sec:numerical2}

Figures \ref{fig:chocosgd}, \ref{fig:beer} and \ref{fig:qsdgd} present the numerical performance of the following comparison pairs:
\begin{itemize}
\item CHOCO-SGD vs. CHOCO-SGD+,
\item BEER vs. BEER+
\item QSDGD \footnote{QDGD was originally proposed in \cite{reisizadeh2019exact}. We derive QSDGD by replacing the full gradient with stochastic gradients.} vs. QSDGD+ 
\end{itemize}

In each pair, the former represents the original method from the existing literature, using \textbf{Strategy 1} to update the step-sizes, while the latter (marked with a "+") is its direct nonsmooth extension adapted to our framework, using \textbf{Strategy 2} for step-size updates. Within each pair, the hyper-parameters (including $\theta_0$ and $\eta_0$) are set identically.

\begin{figure}[htbp]
\centering
\subfigure[Test accuracy w.r.t. Epochs]{
\includegraphics[width=5.8cm, height=3.4cm]{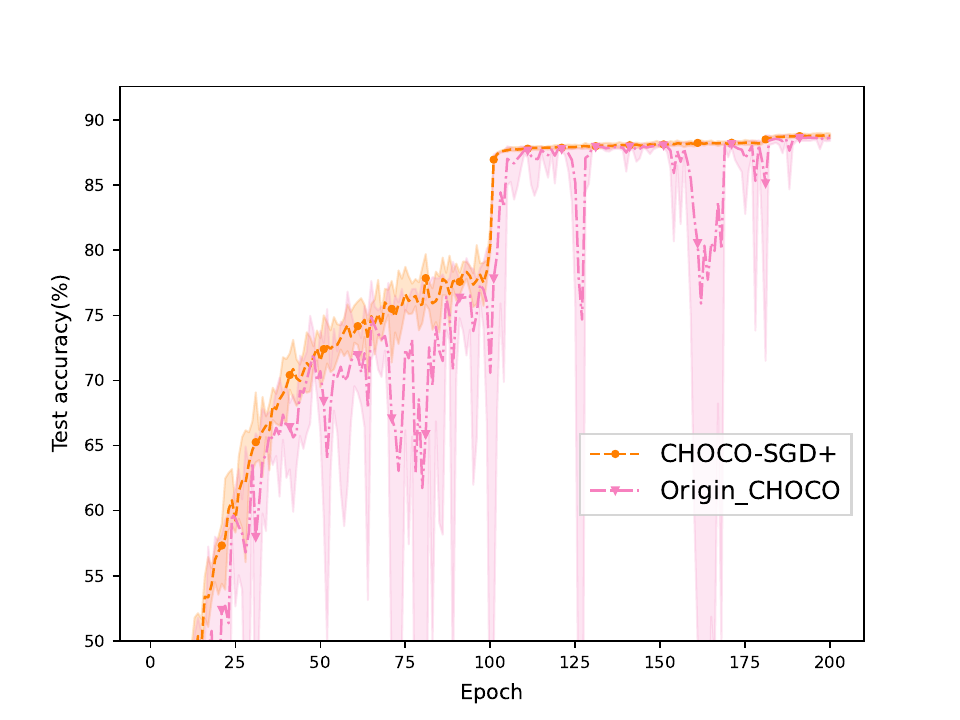}}
\hfil
\subfigure[Training loss w.r.t. Epochs]{
\includegraphics[width=5.8cm, height=3.4cm]{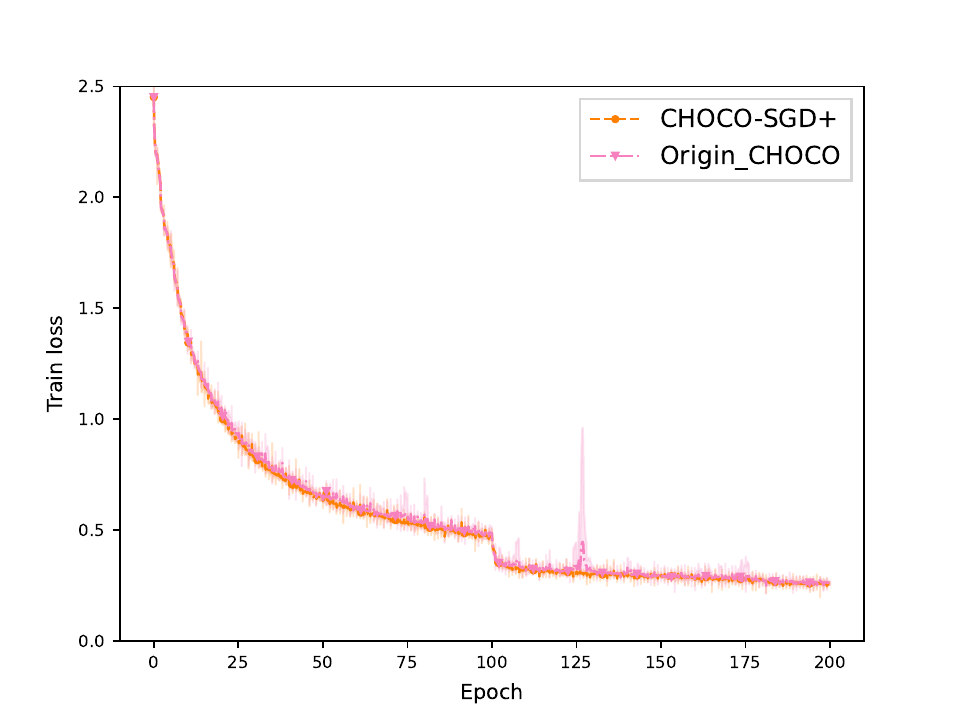}}
\caption{The comparison between CHOCO-SGD and CHOCO-SGD+ with the Random-$10\%$ operator.}
\label{fig:chocosgd}
\end{figure}

\begin{figure}[htbp]
\centering
\subfigure[Test accuracy w.r.t. Epochs]{
\includegraphics[width=5.8cm, height=3.4cm]{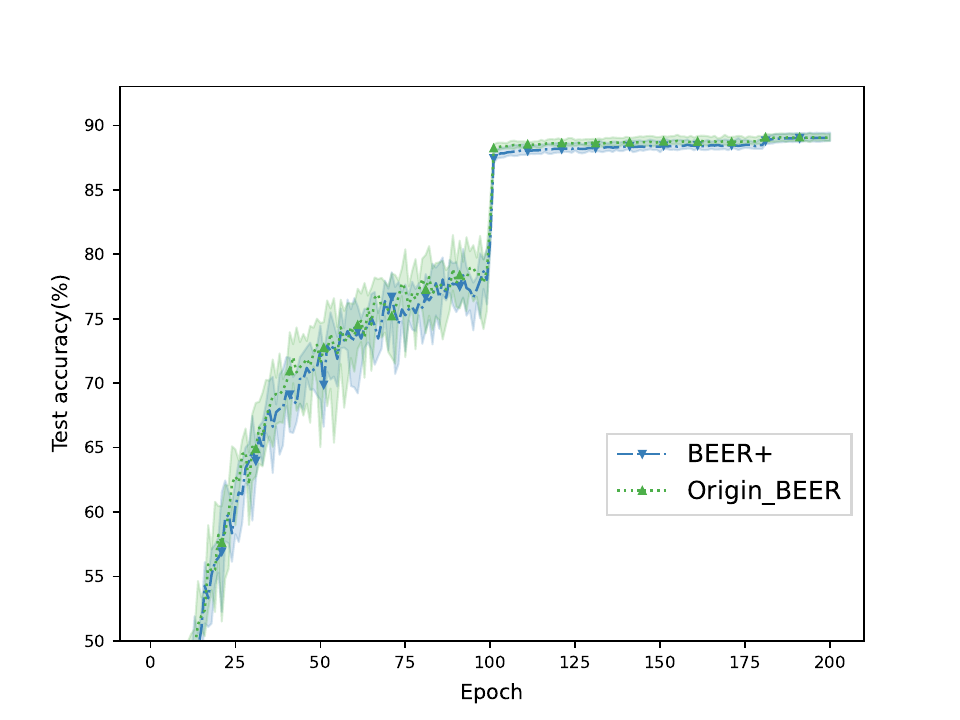}}
\hfil
\subfigure[Training loss w.r.t. Epochs]{
\includegraphics[width=5.8cm, height=3.4cm]{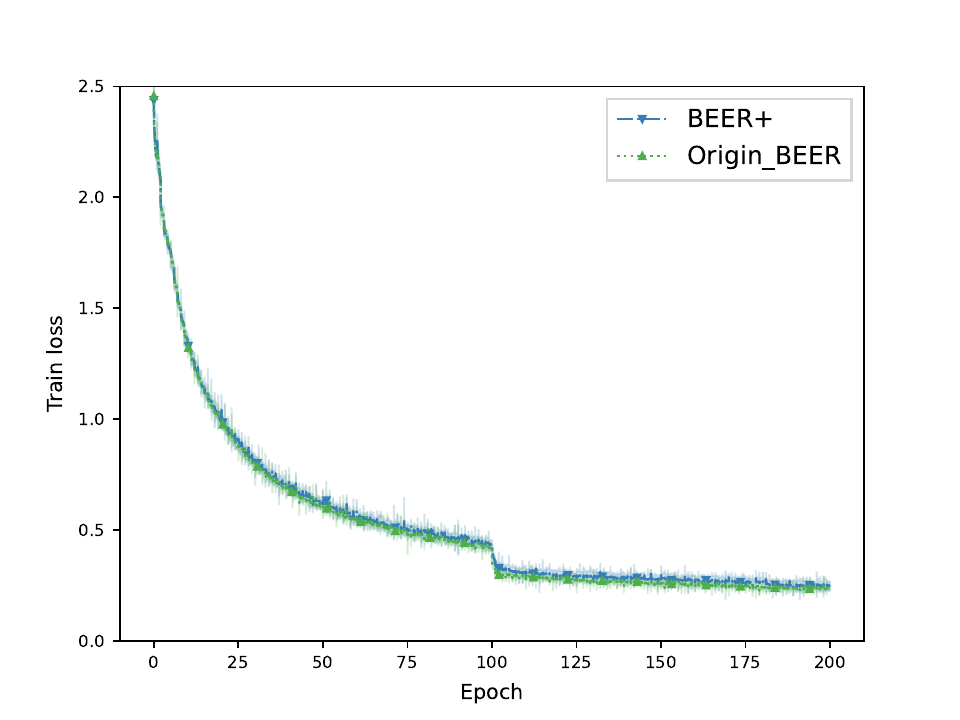}}
\caption{The comparison between BEER and BEER+ with the rescaled $Q_{2^8}$ operator.}
\label{fig:beer}
\end{figure}

\begin{figure}[htbp]
\centering
\subfigure[Test accuracy w.r.t. Epochs]{
\includegraphics[width=5.8cm, height=3.4cm]{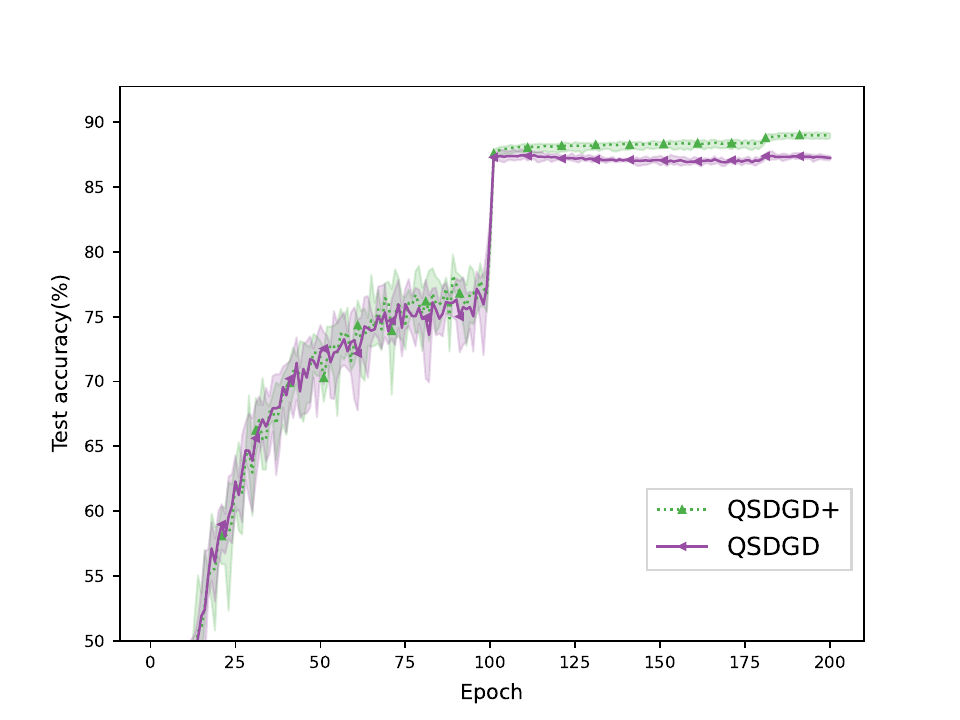}}
\hfil
\subfigure[Training loss w.r.t. Epochs]{
\includegraphics[width=5.8cm, height=3.4cm]{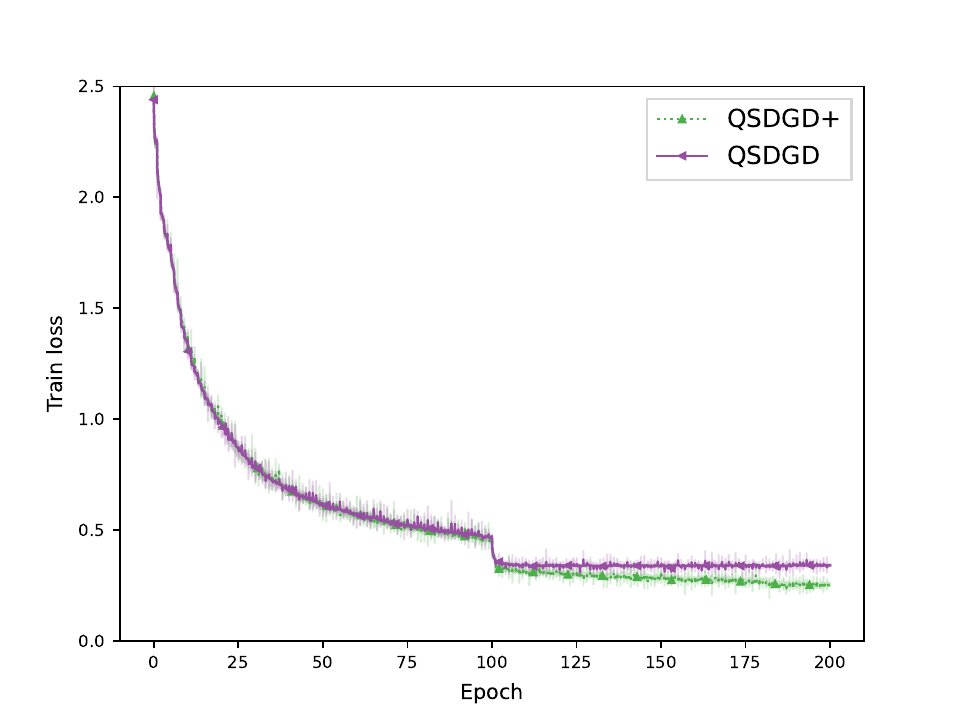}}
\caption{The comparison between QSDGD and QSDGD+ with the $Q_{2^8}$ operator.}
\label{fig:qsdgd}
\end{figure}

As shown in Figure \ref{fig:chocosgd}-\ref{fig:qsdgd}, the nonsmooth extensions of existing methods based on our framework demonstrate performance comparable to their original counterparts when training nonsmooth neural networks. Moreover, we observe that when using the Random-$10\%$ operator, the original CHOCO-SGD exhibits loss spikes and abrupt declines in test accuracy, while CHOCO-SGD+ behaves more stably and smoothly compared to the original version.

\subsection{Performance comparison between nonsmooth extensions of existing methods and the newly developed methods under our framework}\label{sec:numerical}

In this part, we conduct numerical comparisons organized into the following three groups, each of which utilizes \textbf{Strategy 2} for updating step-sizes.
\begin{itemize}
\item \textbf{GD-based methods with contractive compression}: CHOCO-SGD+, DSM-Con and SignDSGD-Con;
\item \textbf{Gradient-tracking-based methods with contractive compression}: BEER+ and DSGTM-Con;
\item \textbf{Methods with unbiased compression}: QSDGD+, DSM-Unb, SignDSGD-Unb and DSGTM-Unb.
\end{itemize}

In the first two experimental groups, we evaluate their performance over three contractive compression operators, including the $\mathrm{Top}$-$10\%$ operator, the $\mathrm{Random}$-$10\%$ operator, and the rescaled $Q_{2^8}$ operator. 
In the last experimental group, we utilize the $Q_{2^8}$ operator as our unbiased compression operator. Furthermore, we incorporate vanilla DSGD \cite{lian2017can} as the baseline algorithm. 

\begin{figure}[htbp]
\centering
\subfigure[Test accuracy w.r.t. Epochs]{
\includegraphics[width=3.6cm, height=2.6cm]{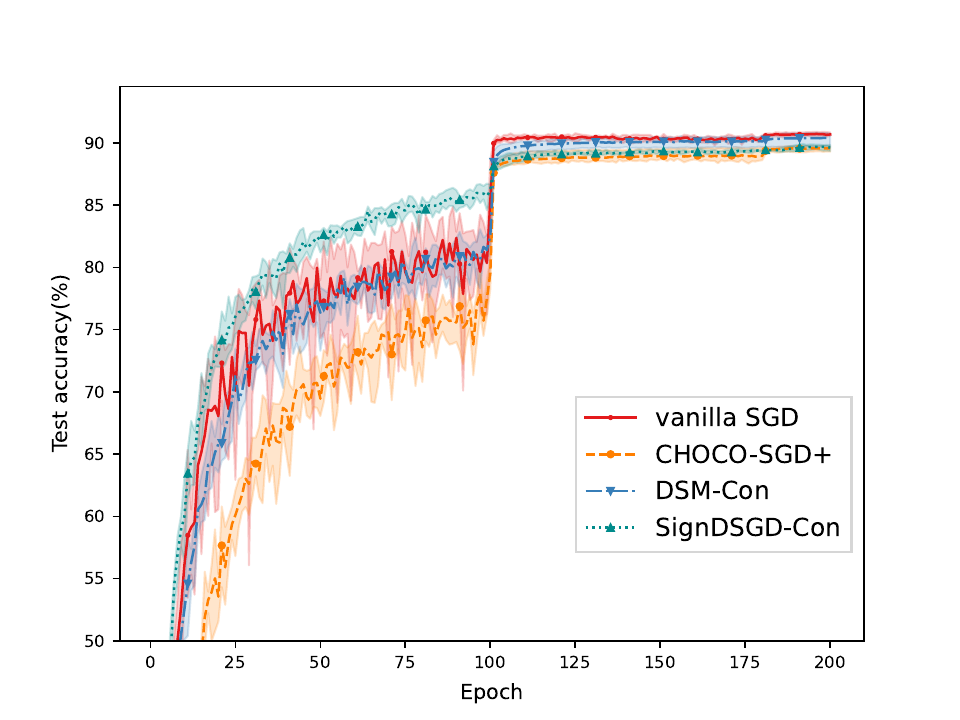}
\label{fig_1_1}}
\subfigure[Training loss w.r.t. Epochs]{
\includegraphics[width=3.6cm, height=2.6cm]{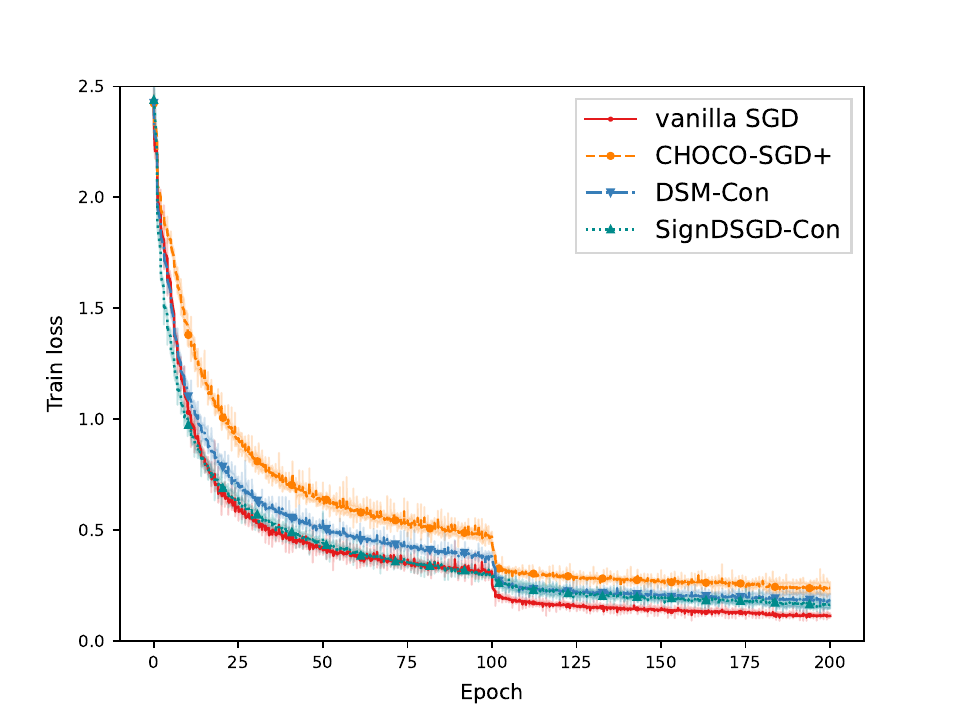}
\label{fig_1_2}}
\subfigure[Test accuracy w.r.t. Communication bits]{
\includegraphics[width=3.6cm, height=2.6cm]{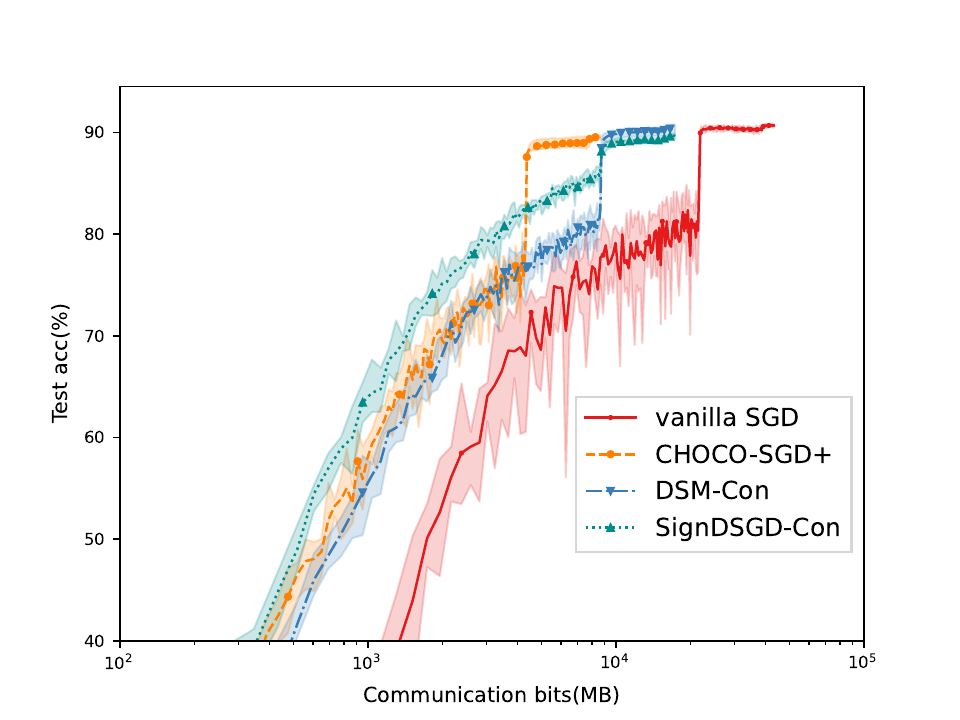}
\label{fig_1_3}}
\subfigure[Training loss w.r.t. Communication bits]{
\includegraphics[width=3.6cm, height=2.6cm]{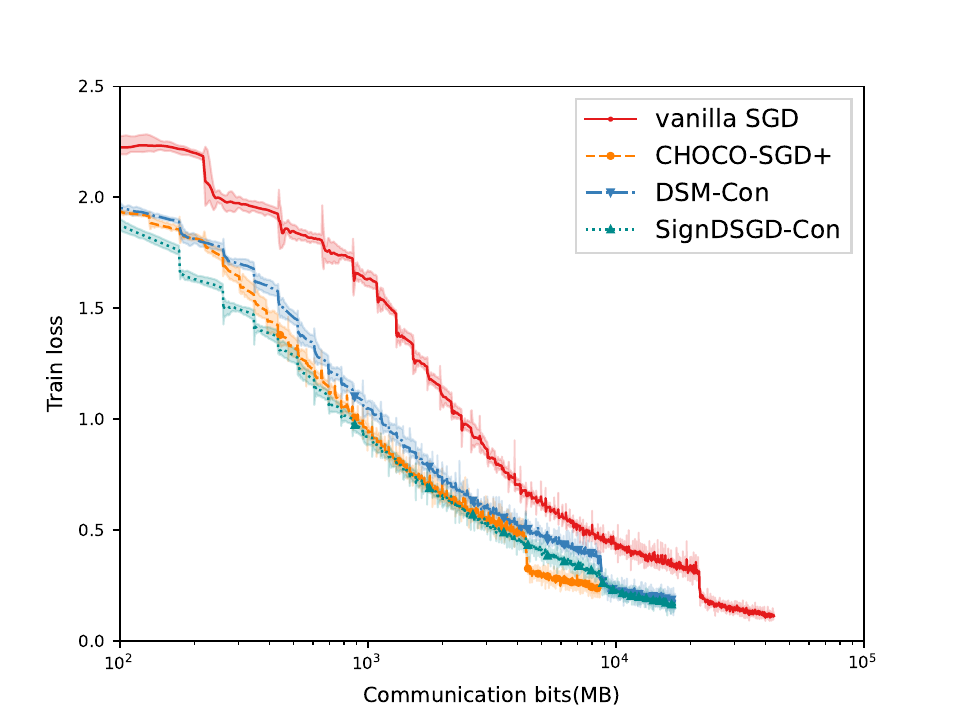}
\label{fig_1_4}}
\caption{Numerical results of GD-based methods with contractive compression ($\mathrm{Top}$-$10\%$ operator).}
\label{fig:group1}
\end{figure}

\begin{figure}[htbp]
\centering
\subfigure[Test accuracy w.r.t. Epochs]{
\includegraphics[width=3.6cm, height=2.6cm]{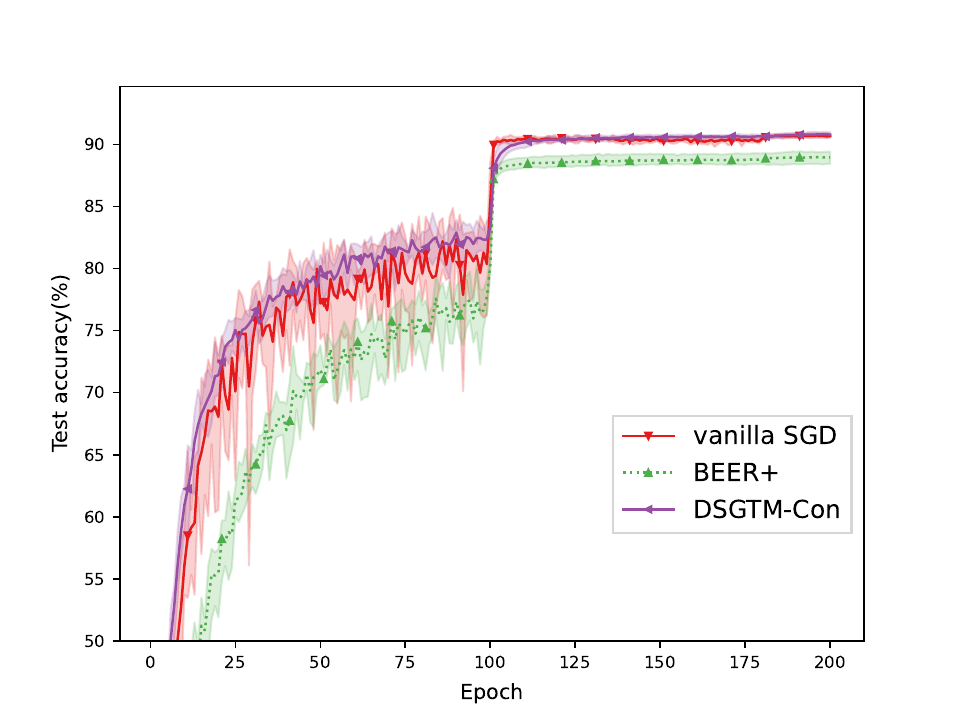}
\label{fig_2_1}}
\hfil
\subfigure[Training loss w.r.t. Epochs]{
\includegraphics[width=3.6cm, height=2.6cm]{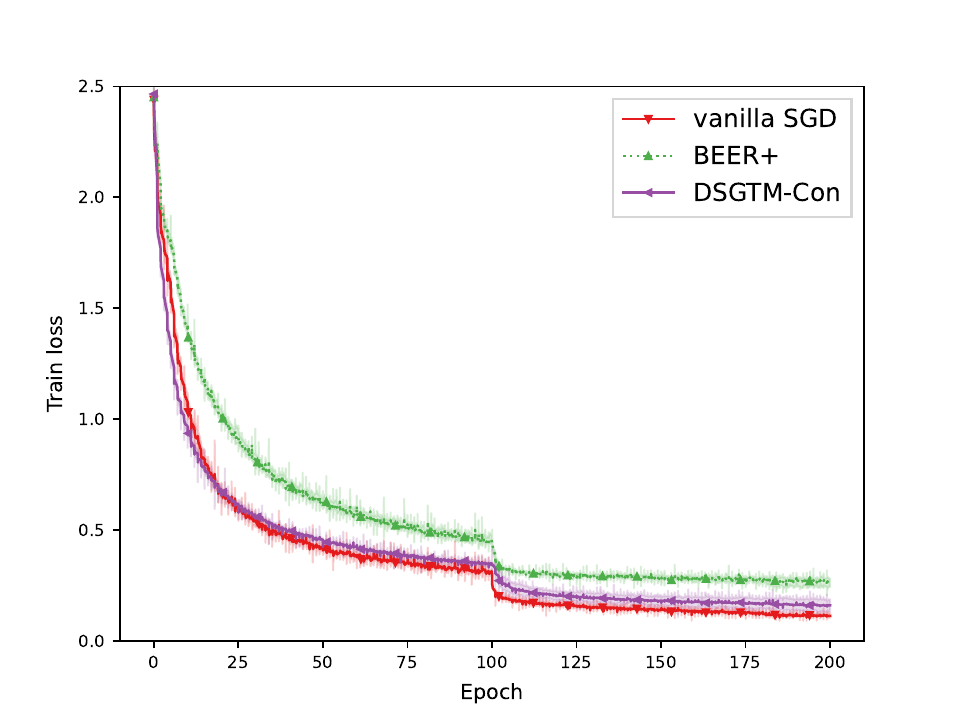}
\label{fig_2_2}}
\hfil
\subfigure[Test accuracy w.r.t. Communication bits]{
\includegraphics[width=3.6cm, height=2.6cm]{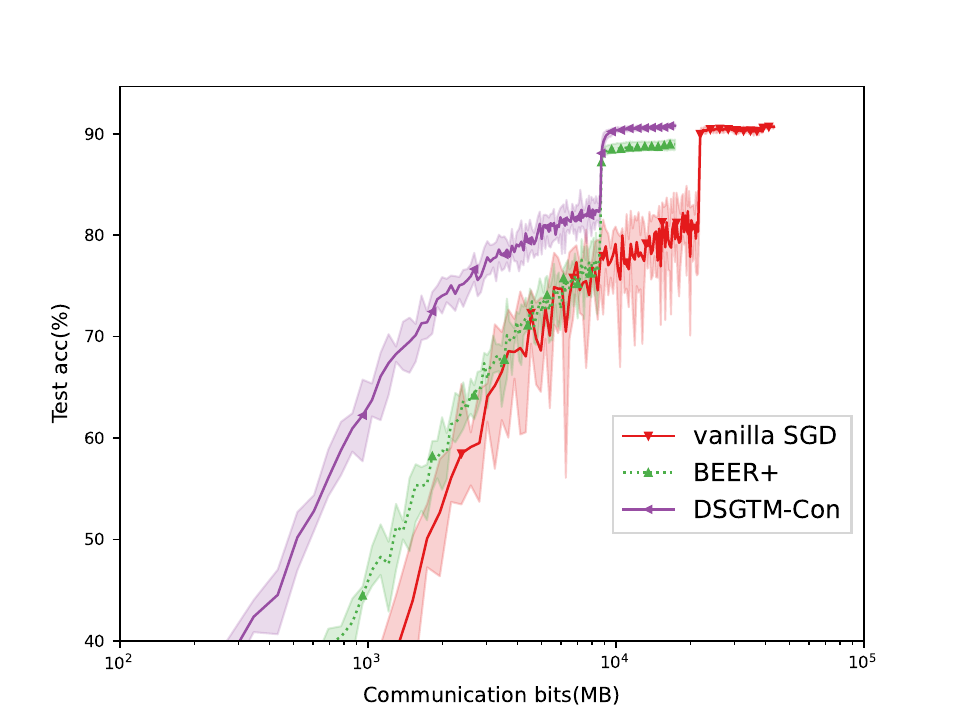}
\label{fig_2_3}}
\hfil
\subfigure[Training loss w.r.t. Communication bits]{
\includegraphics[width=3.6cm, height=2.6cm]{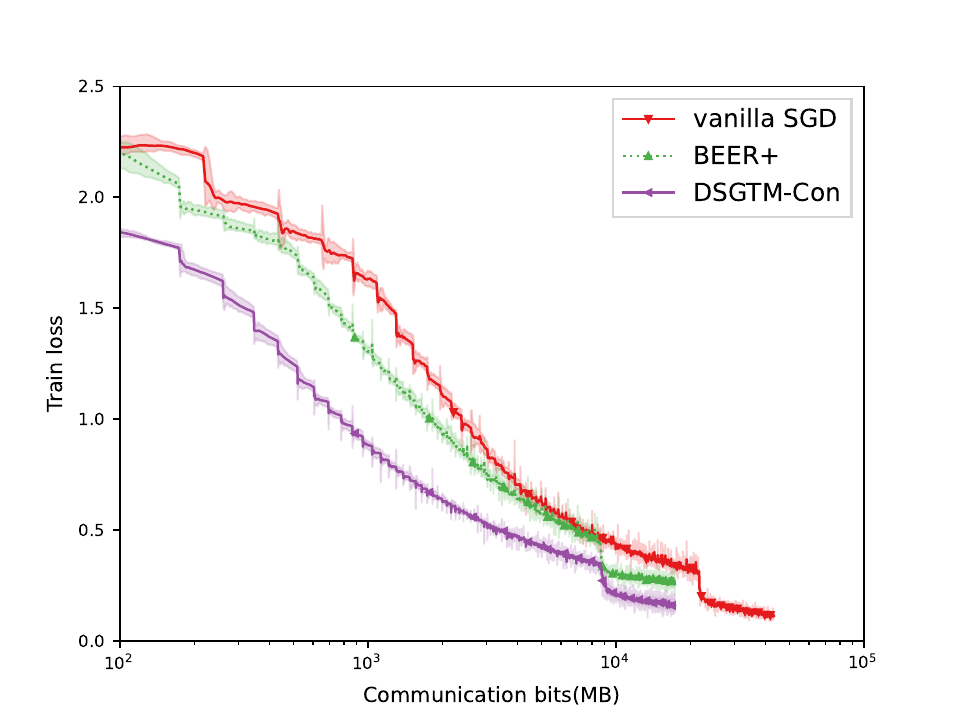}
\label{fig_2_4}}
\hfil

\caption{Numerical results of Gradient-tracking-based methods with contractive compression ($\mathrm{Top}$-$10\%$ operator).}
\label{fig:group2}
\end{figure}

\begin{figure}[htbp]
\centering
\subfigure[Test accuracy w.r.t. Epochs]{
\includegraphics[width=3.6cm, height=2.6cm]{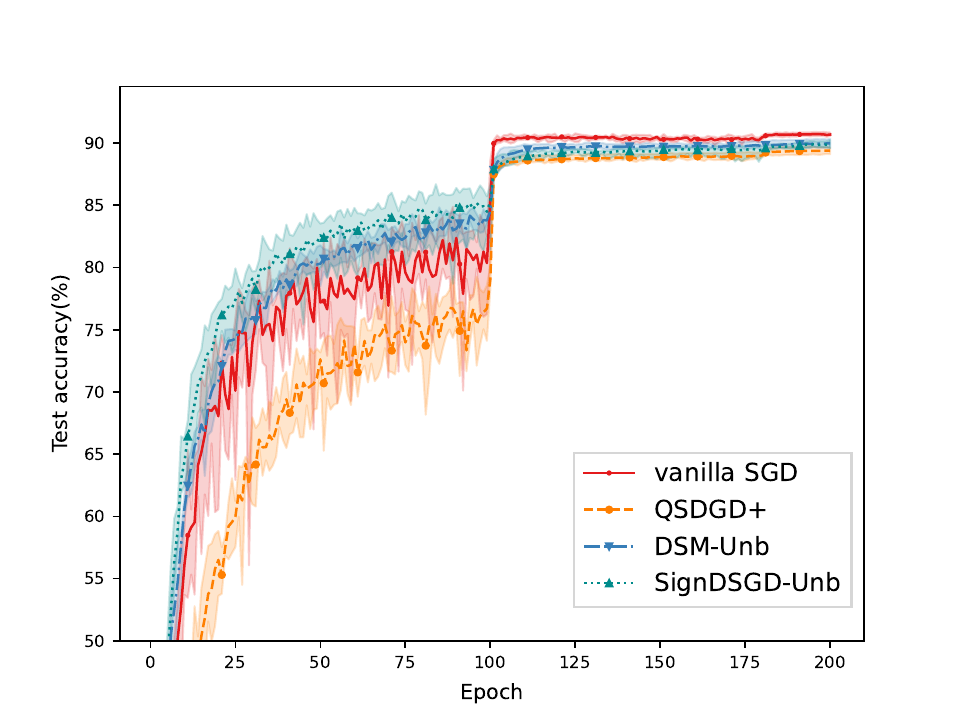}
\label{fig_4_1}}
\hfil
\subfigure[Training loss w.r.t. Epochs]{
\includegraphics[width=3.6cm, height=2.6cm]{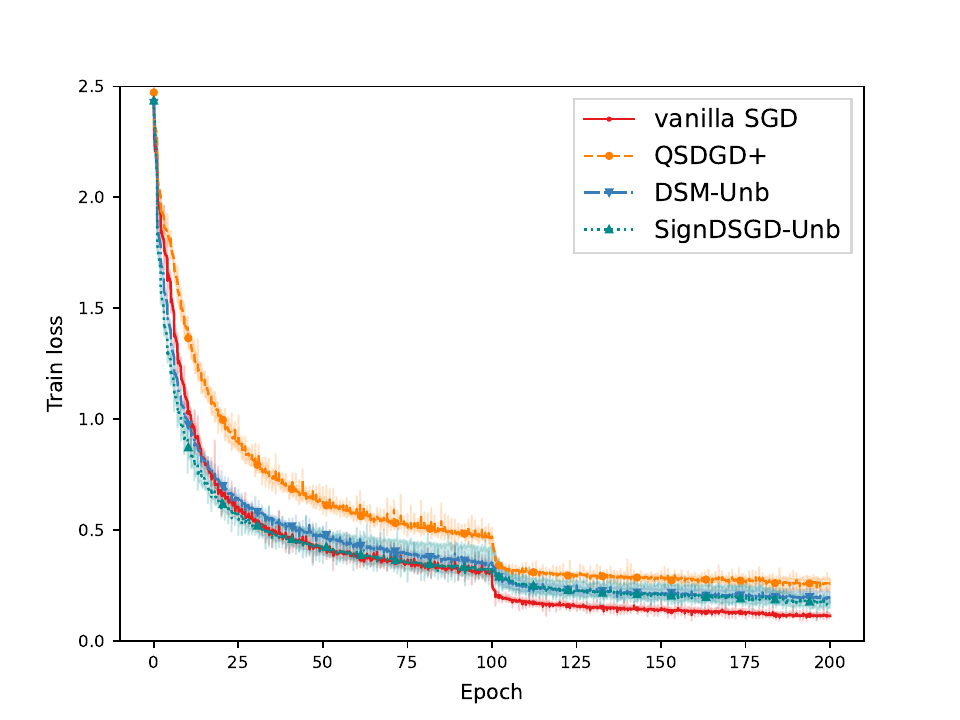}
\label{fig_4_2}}
\hfil
\subfigure[Test accuracy w.r.t. Communication bits]{
\includegraphics[width=3.6cm, height=2.6cm]{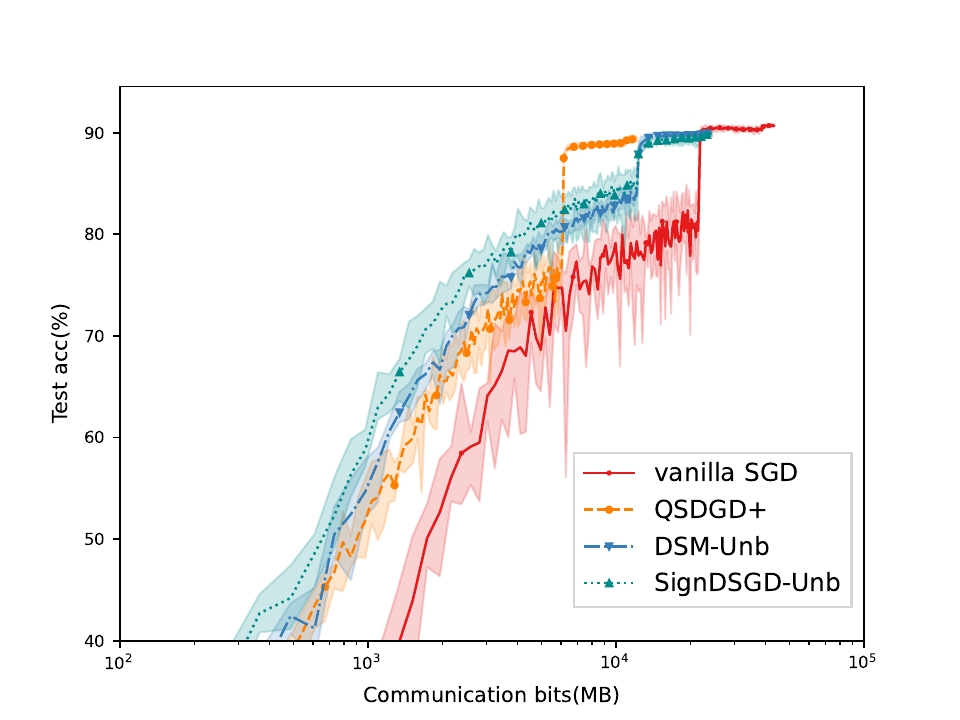}
\label{fig_4_3}}
\hfil
\subfigure[Training loss w.r.t. Communication bits]{
\includegraphics[width=3.6cm, height=2.6cm]{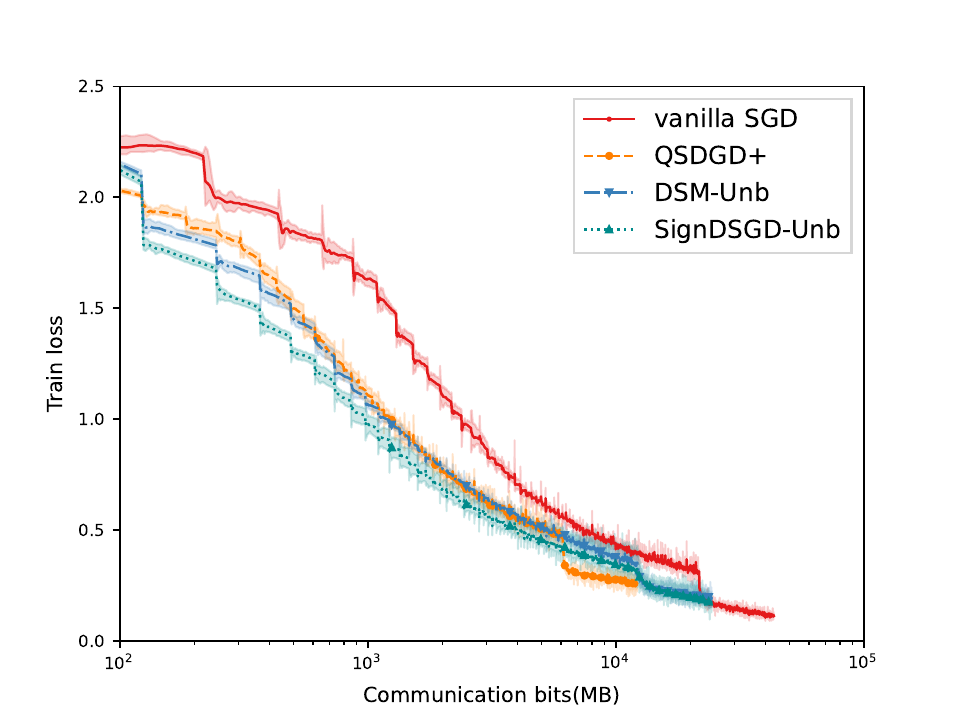}
\label{fig_4_4}}
\hfil
\caption{Numerical results of methods with unbiased compression ($Q_{2^8}$ operator).}
\label{fig:group3}
\end{figure}

\begin{figure}[htbp]
\centering
\subfigure[Test accuracy w.r.t. Epochs]{
\includegraphics[width=3.6cm, height=2.6cm]{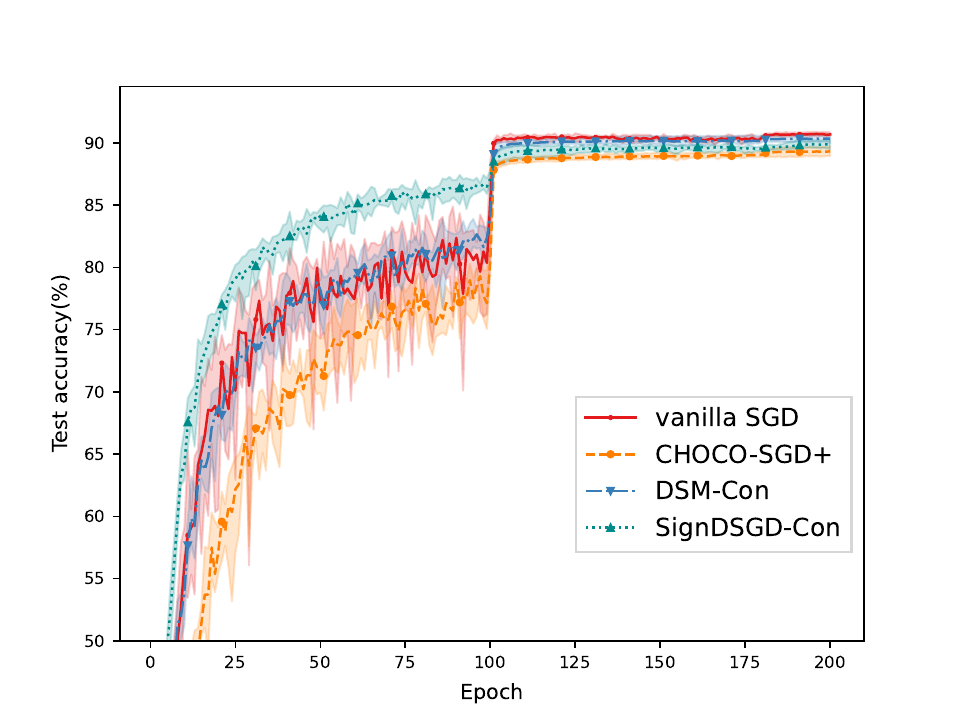}
\label{fig_5_1}}
\hfil
\subfigure[Training loss w.r.t. Epochs]{
\includegraphics[width=3.6cm, height=2.6cm]{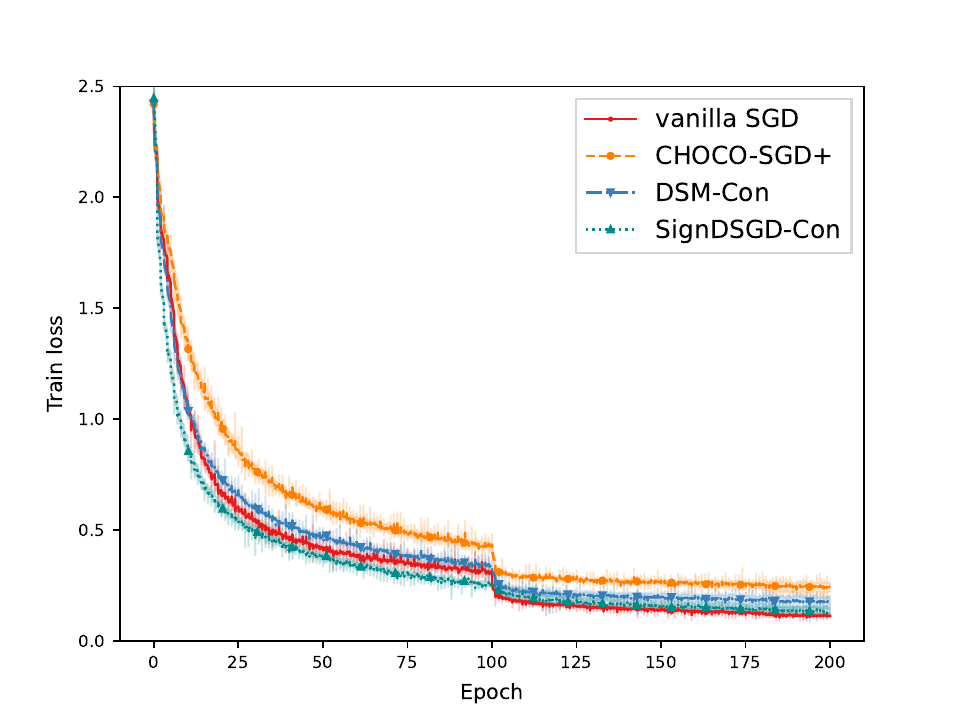}
\label{fig_5_2}}
\hfil
\subfigure[Test accuracy w.r.t. Communication bits]{
\includegraphics[width=3.6cm, height=2.6cm]{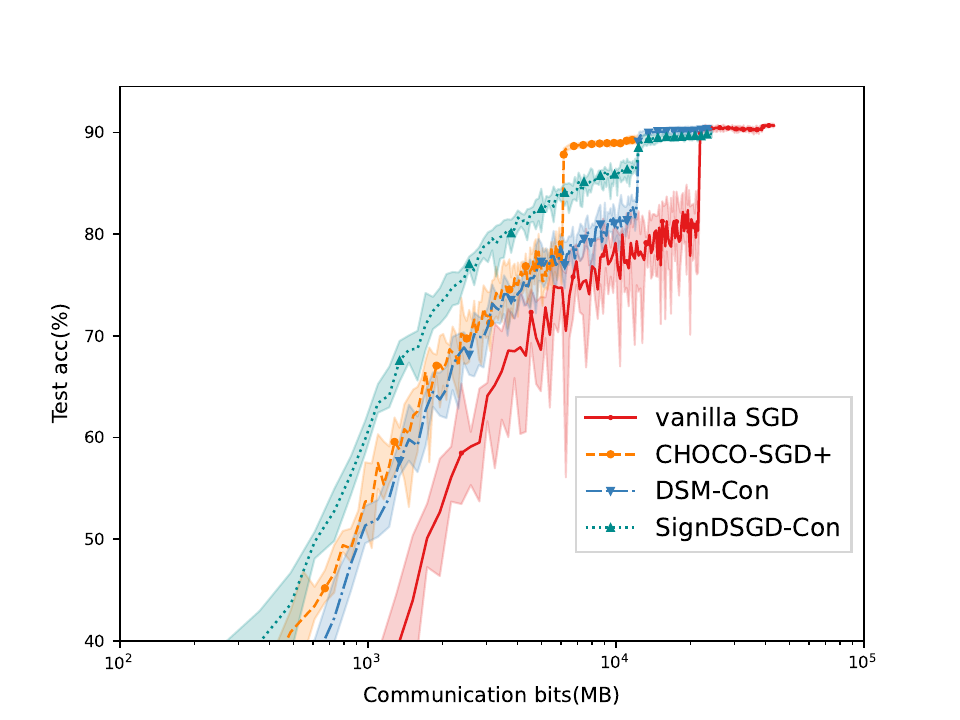}
\label{fig_5_3}}
\hfil
\subfigure[Training loss w.r.t. Communication bits]{
\includegraphics[width=3.6cm, height=2.6cm]{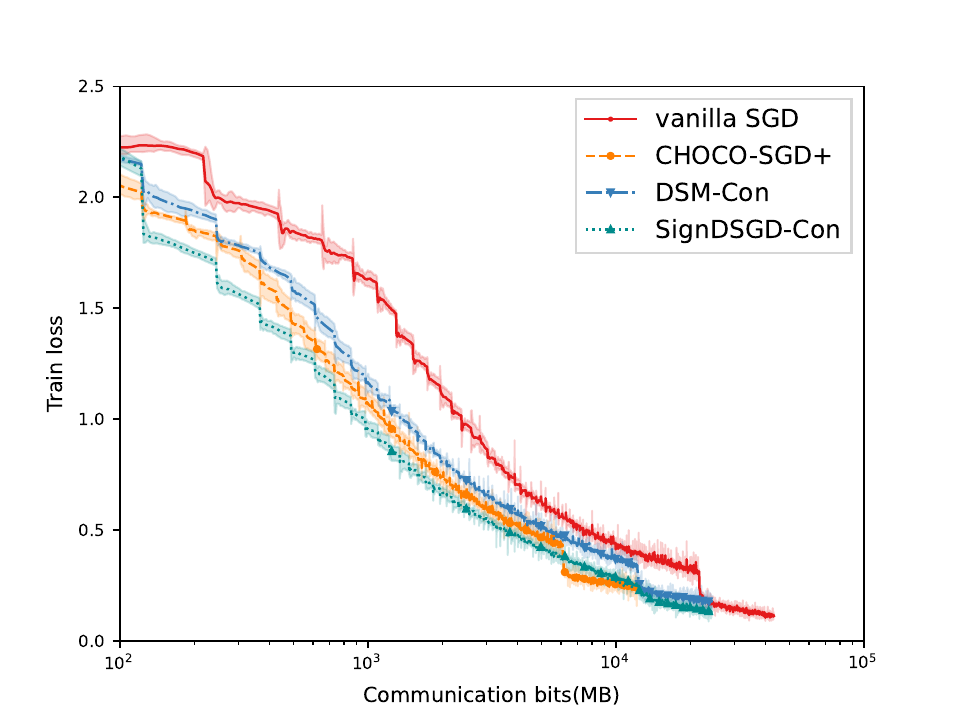}
\label{fig_5_4}}
\hfil

\caption{Numerical results of GD-based methods with contractive compression (rescaled $Q_{2^8}$ operator).}
\label{fig:quant8_group1}
\end{figure}

\begin{figure}[htbp]
\centering
\subfigure[Test accuracy w.r.t. Epochs]{
\includegraphics[width=3.6cm, height=2.6cm]{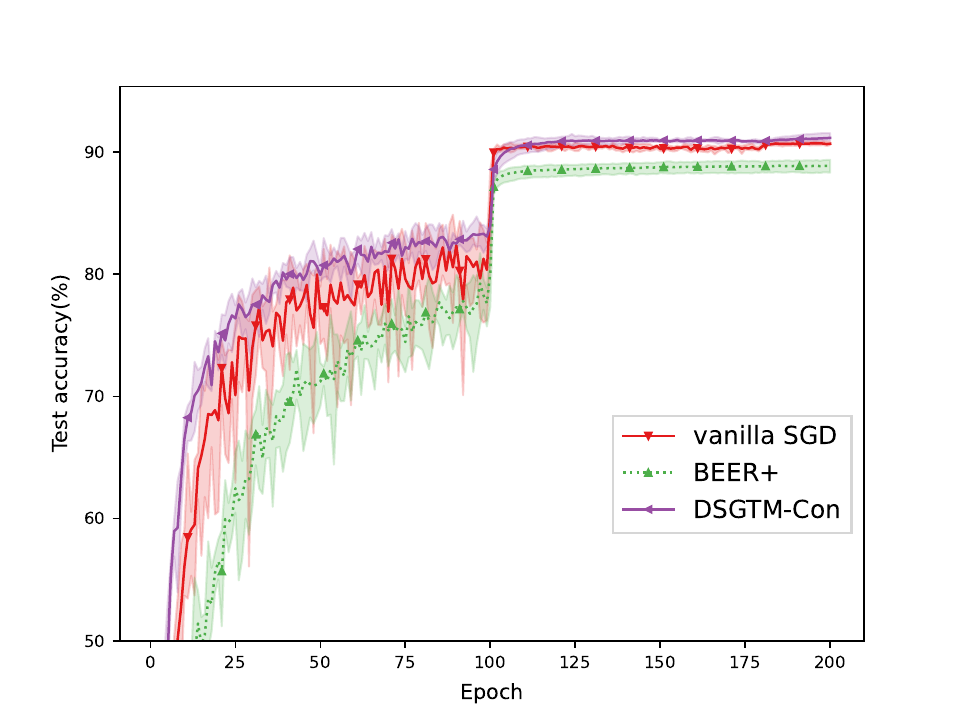}
\label{fig_6_1}}
\hfil
\subfigure[Training loss w.r.t. Epochs]{
\includegraphics[width=3.6cm, height=2.6cm]{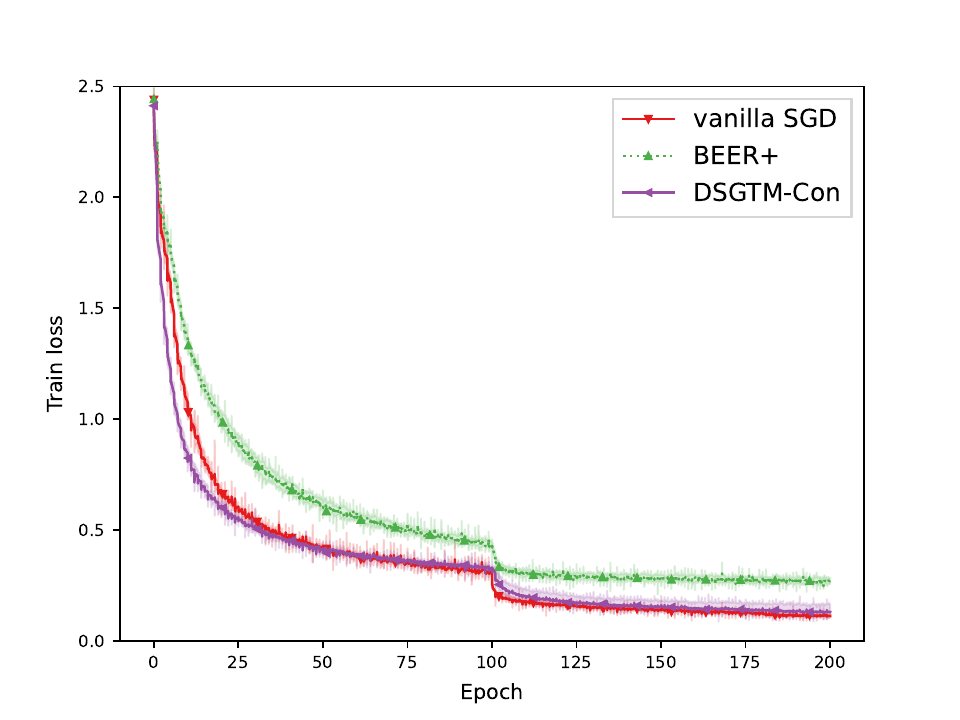}
\label{fig_6_2}}
\hfil
\subfigure[Test accuracy w.r.t. Communication bits]{
\includegraphics[width=3.6cm, height=2.6cm]{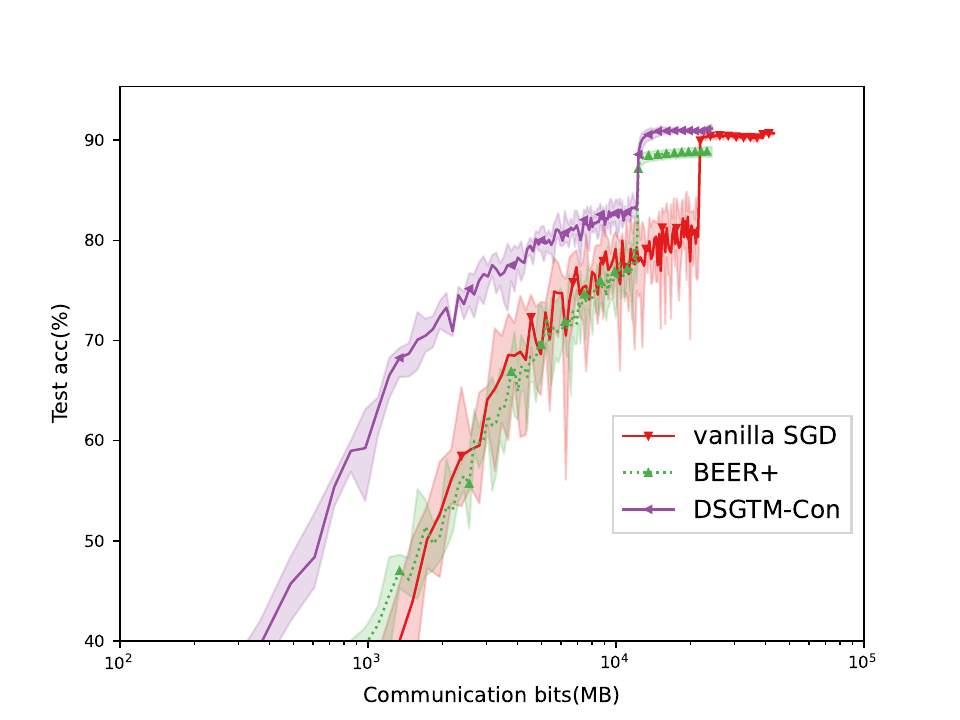}
\label{fig_6_3}}
\hfil
\subfigure[Training loss w.r.t. Communication bits]{
\includegraphics[width=3.6cm, height=2.6cm]{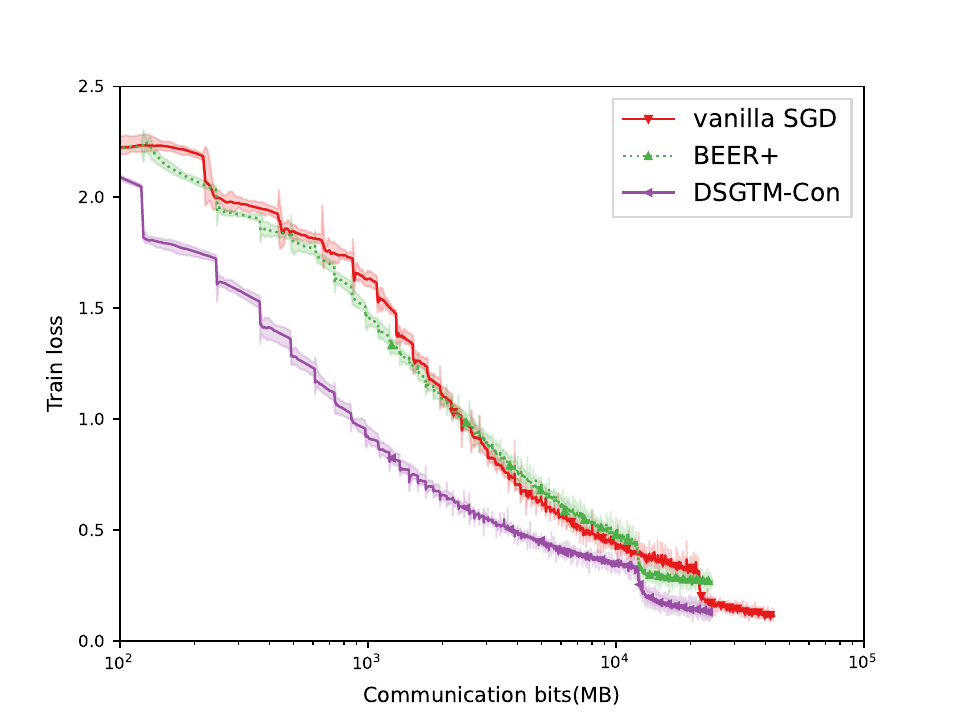}
\label{fig_6_4}}
\hfil

\caption{Numerical results of Gradient-tracking-based methods with contractive compression (rescaled $Q_{2^8}$ operator).}
\label{fig:quant8_group2}
\end{figure}

\begin{figure}[htbp]
\centering
\subfigure[Test accuracy w.r.t. Epochs]{
\includegraphics[width=3.6cm, height=2.6cm]{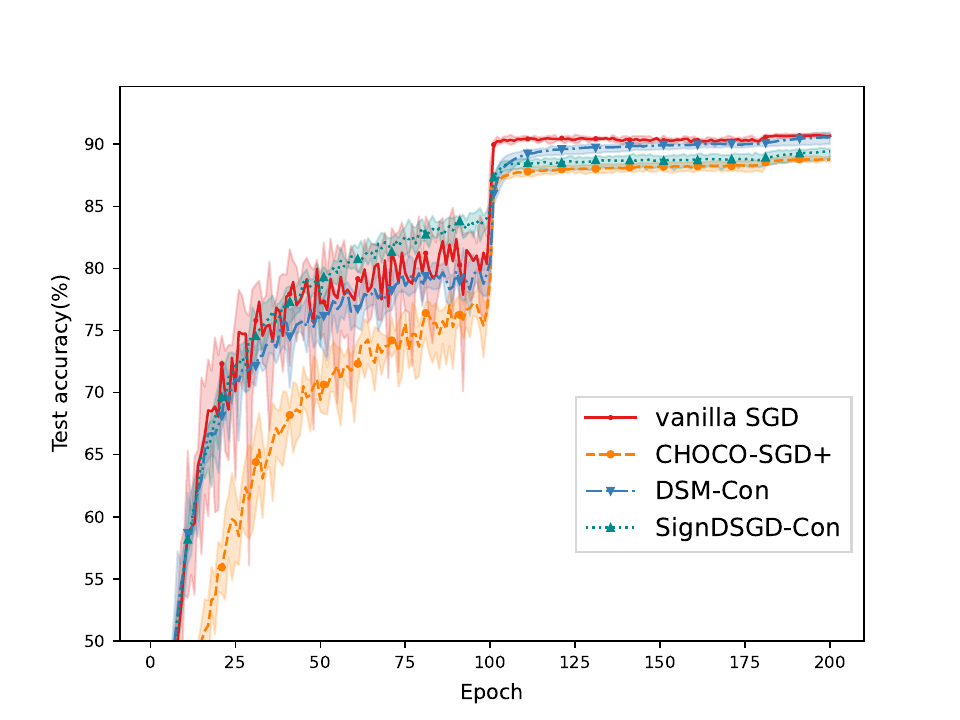}
\label{fig_7_1}}
\hfil
\subfigure[Training loss w.r.t. Epochs]{
\includegraphics[width=3.6cm, height=2.6cm]{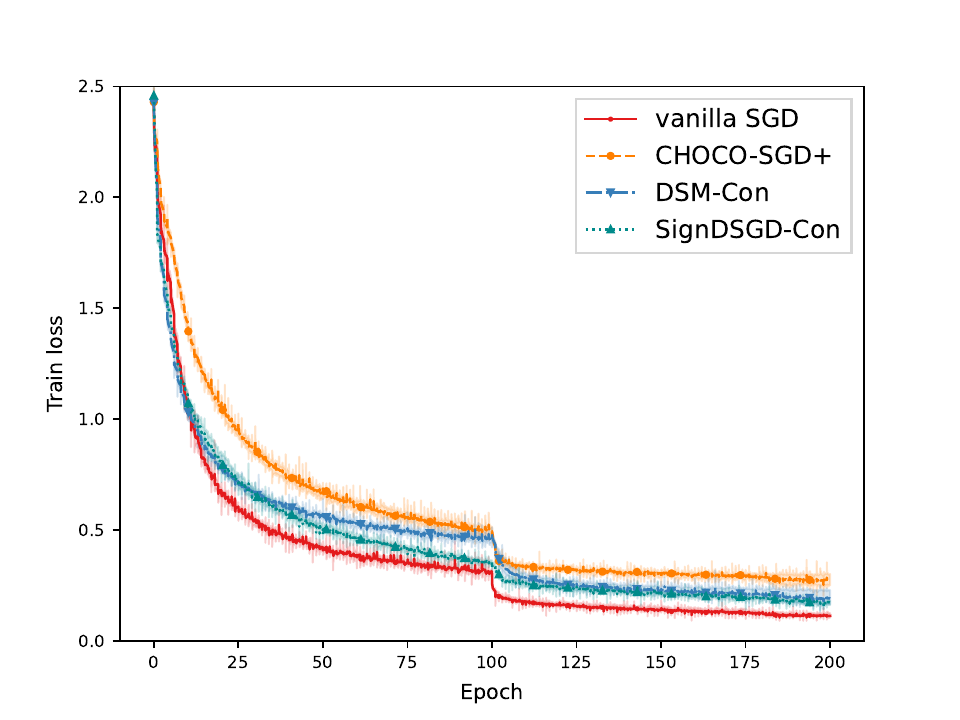}
\label{fig_7_2}}
\hfil
\subfigure[Test accuracy w.r.t. Communication bits]{
\includegraphics[width=3.6cm, height=2.6cm]{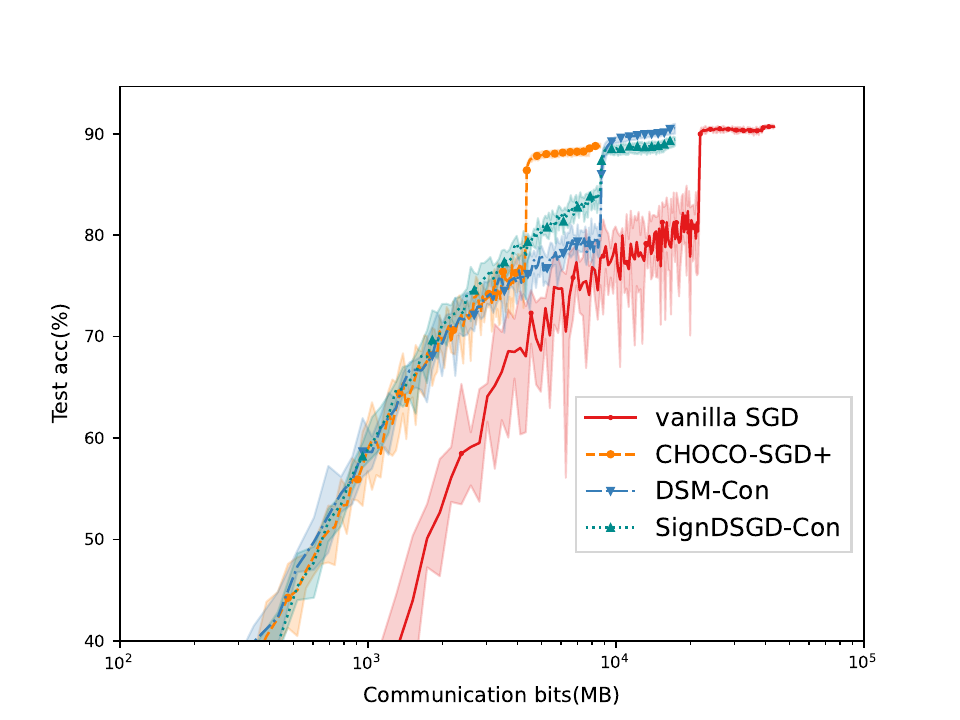}
\label{fig_7_3}}
\hfil
\subfigure[Training loss w.r.t. Communication bits]{
\includegraphics[width=3.6cm, height=2.6cm]{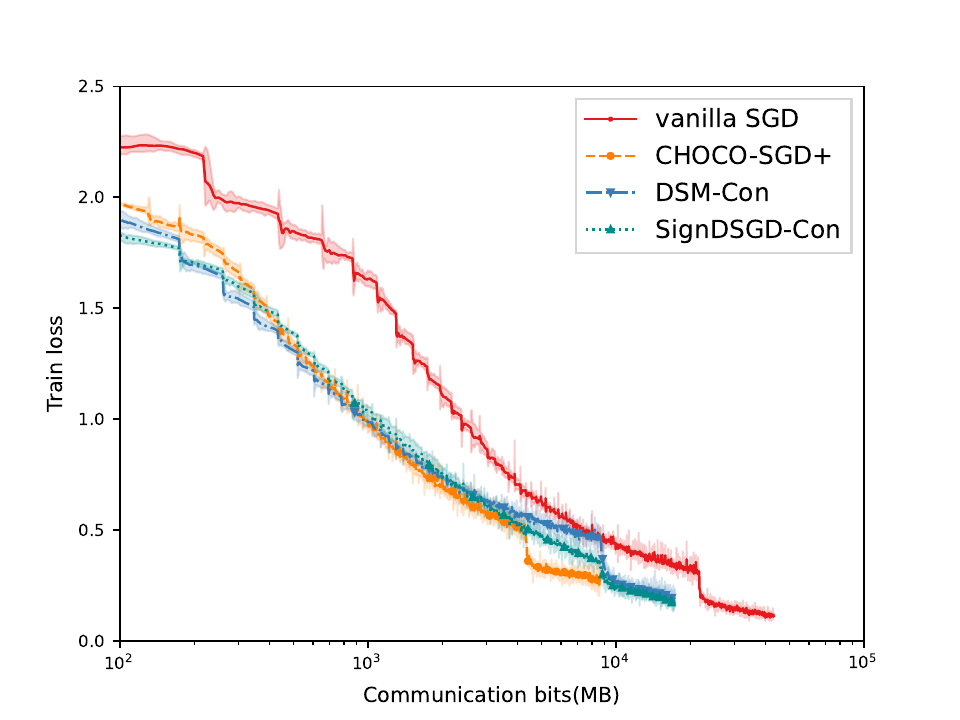}
\label{fig_7_4}}
\hfil

\caption{Numerical results of GD-based methods with contractive compression ($\mathrm{Random}$-$10\%$ operator).}
\label{fig:random10_group1}
\end{figure}

\begin{figure}[htbp]
\centering
\subfigure[Test accuracy w.r.t. Epochs]{
\includegraphics[width=3.6cm, height=2.6cm]{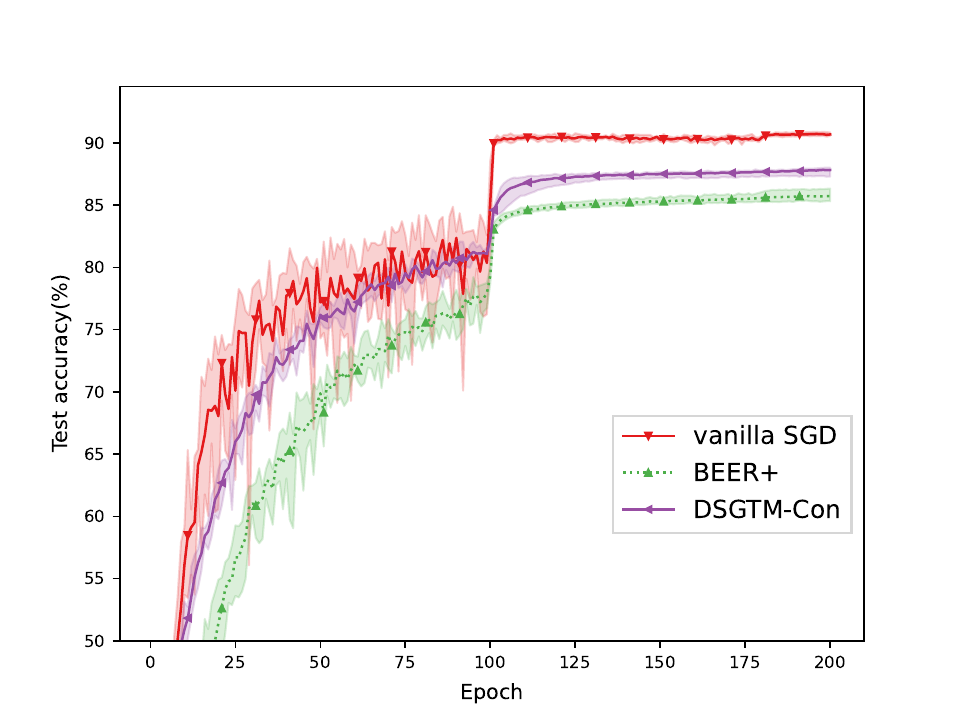}
\label{fig_8_1}}
\hfil
\subfigure[Training loss w.r.t. Epochs]{
\includegraphics[width=3.6cm, height=2.6cm]{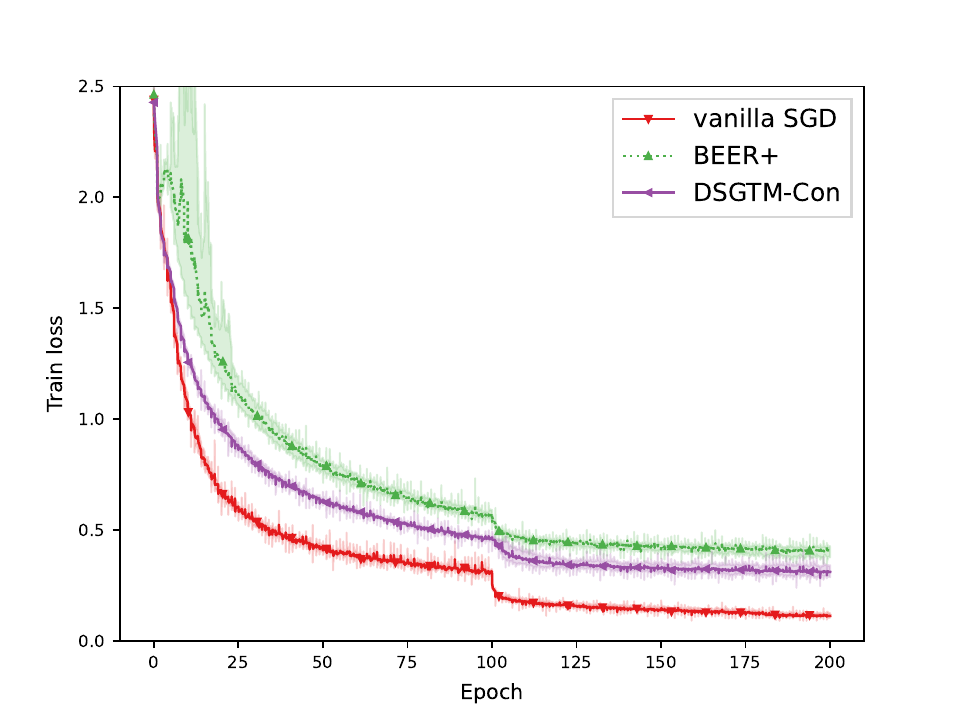}
\label{fig_8_2}}
\hfil
\subfigure[Test accuracy w.r.t. Communication bits]{
\includegraphics[width=3.6cm, height=2.6cm]{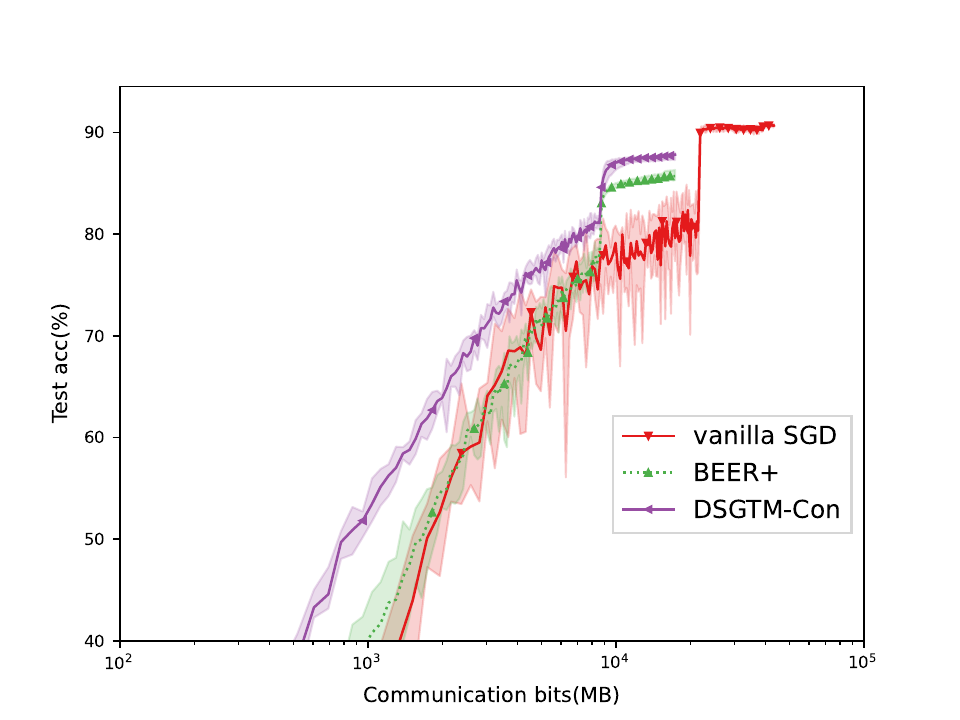}
\label{fig_8_3}}
\hfil
\subfigure[Training loss w.r.t. Communication bits]{
\includegraphics[width=3.6cm, height=2.6cm]{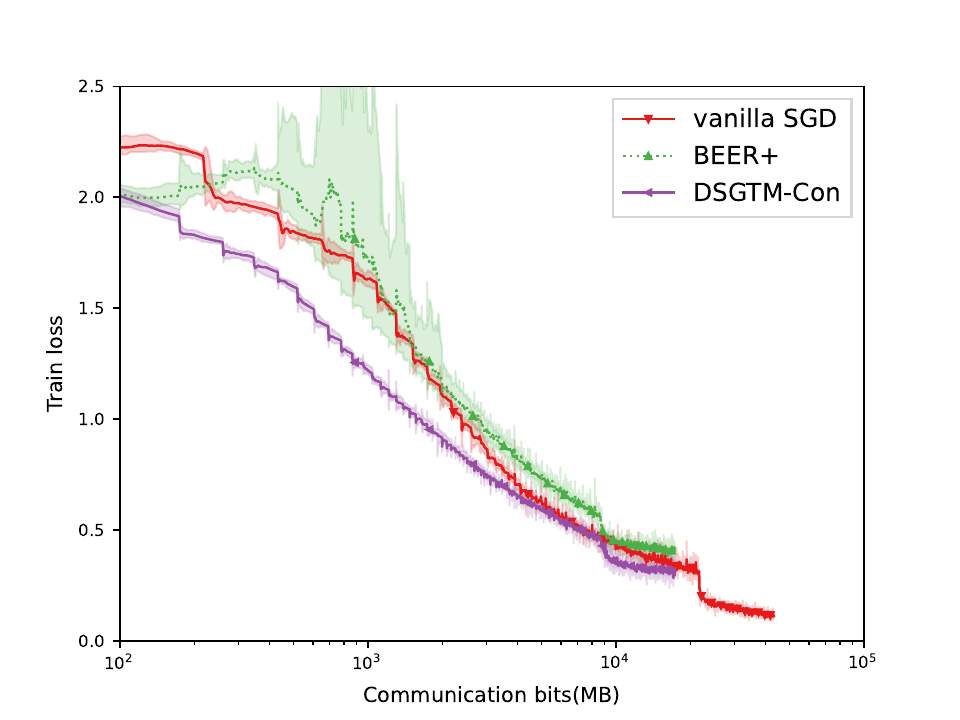}
\label{fig_8_4}}
\hfil

\caption{Numerical results of Gradient-tracking-based methods with contractive compression ($\mathrm{Random}$-$10\%$ operator).}
\label{fig:random10_group2}
\end{figure}

From Figures \ref{fig:group1}, \ref{fig:quant8_group1}, and \ref{fig:random10_group1} (a)-(b), it can be observed that SignDSGD-Con and DSM-Con outperform the extended versions of CHOCO-SGD in terms of both training loss and test accuracy, and they are closer to the baseline method, vanilla DSGD. From Figures \ref{fig:group1}, \ref{fig:quant8_group1} and \ref{fig:random10_group1} (c)-(d), we can infer that, with a smaller communication budget, CHOCO-SGD+ achieves higher test accuracy, followed by the two newly developed methods. However, with a slightly larger communication budget, SignDSGD-Con and DSM-Con perform better. Furthermore, from Figures \ref{fig:group2}, \ref{fig:quant8_group2} and \ref{fig:random10_group2}, we conclude that DSGTM-Con matches the uncompressed baseline in both test accuracy and training loss, while consistently reducing communication costs by $1/3$, demonstrating consistently better overall performance than BEER+ across all evaluated metrics.

Figure \ref{fig:group3} shows that, among unbiased compression methods, SignDSGD-Unb and DSM-Unb achieve the highest test accuracy, while QSDGD+ and DSGTM-Unb exhibit slightly inferior performance. Notably, QSDGD+ attains competitive accuracy and lower training loss with single-node communication costs below $10^4$ MB. However, as the communication budget increases, DSM-Unb emerges as the top-performing method.

Based on the comprehensive numerical comparisons, we demonstrate that our framework exhibits significant potential for developing methods that balance communication efficiency and accuracy.

\section{Concluding Remarks}
In this paper, we design a unified framework \eqref{Eq_Framework} for developing decentralized stochastic subgradient-type methods with communication compression. By configuring different compression errors and selecting diagonal matrix $\Sigma$ as either $\Diag({\bm W})$ or ${\bm I}_d$, our framework can adapt to two variants suitable for unbiased or contractive communication compression, respectively. Moreover, \eqref{Eq_Framework} employs a family of set-valued mappings $\{\Phi_{i}\}_{i\in[d]}$ to characterize the general relation between update direction and iterates, which unifies a broad class of acceleration techniques, such as Heavy-ball momentum, Nesterov momentum, and gradient tracking.

By introducing slowly diminishing sequences $\{\theta_{k}\}$ (and  $\{\gamma_{k}\}$) to regulate modified local average and the update of local copies, we establish the consensus properties and global asymptotic convergence for our framework \eqref{Eq_Framework}.  In particular, we demonstrate that a wide range of decentralized stochastic subgradient-type methods with communication compression fit into our proposed framework, including the nonsmooth extensions of QDGD, CHOCO-SGD, BEER, and others. Therefore, our theoretical results provide, for the first time, global convergence guarantees for these compression-based decentralized SGD-type methods in nonsmooth nonconvex optimization. Further, we develop several compression-based methods based on \eqref{Eq_Framework}, such as DSM-Unb(Con), SignDSGD-Unb(Con), DSGTM-Con, demonstrating the significant potential of \eqref{Eq_Framework} in developing practical methods. Preliminary numerical results validate our theoretical results and showcase the high efficiency of these methods within the framework.

Finally, several topics warrant future study to further understand the behavior of decentralized stochastic subgradient-type methods with communication compression in nonsmooth optimization. For instance, it would be valuable to explore whether, under random initialization, the iterates of our framework \eqref{Eq_Framework} almost surely converge to a Clarke-critical point, i.e., excluding potential spurious critical points within the set of $D_f$. Besides, another interesting direction is to investigate the development of nonsmooth Adam-like compression-based algorithms based on our framework, aiming to achieve a better trade-off between communication compression and accuracy.

\bibliographystyle{plain}
\bibliography{ref}

\section*{Appendix}

\subsection{Proof of Lemma \ref{lem02}}\label{sec:appendix-01}


\begin{proof}
A straightforward calculation shows that, for any $N\in \mathbb{N}^{*}$,
\begin{equation}\label{eq:prod1}
\begin{aligned}
&~ a\gamma_{k}+ \sum_{i=N}^{k} \prod_{j=i}^{k}(1-a\gamma_j)a\gamma_{i-1} + \prod_{j=N-1}^{k}(1-a\gamma_{j}) \\
= &~ a\gamma_{k} + \sum_{i=N+1}^{k}\prod_{j=i}^{k}(1-a\gamma_j)a\gamma_{i-1} +  \prod_{j=N}^{k}(1-a\gamma_{j})a\gamma_{N-1} + \prod_{j=N}^{k}(1-a\gamma_{j})(1-a\gamma_{N-1})\\
= &~ a\gamma_{k} + \sum_{i=N+1}^{k}\prod_{j=i}^{k}(1-a\gamma_j)a\gamma_{i-1} + \prod_{j=N}^{k}(1-a\gamma_{j}) \\
= &~ a\gamma_{k} + \sum_{i=k}^{k}\prod_{j=i}^{k}(1-a\gamma_j)a\gamma_{i-1} + \prod_{j=k-1}^{k}(1-a\gamma_{j}) \\
= &~ 1\\
\end{aligned}
\end{equation}
We also note that 
\begin{equation}\label{eq:prod2}
\sum_{i=1}^{N-1} \prod_{j=i}^{k}(1-a\gamma_j)a\gamma_{i-1} = \prod_{j=N-1}^{k}(1-a\gamma_{j}) - \prod_{j=0}^{k}(1-a\gamma_{j}).
\end{equation}
Since $\lim_{k\to +\infty} \frac{\theta_{k}}{\gamma_{k}}=0$, for any $\varepsilon>0$, there exists $N_0\in\mathbb{N}^*$, such that  $\frac{\theta_{k}}{\gamma_k}< \varepsilon$, for all $k \geq N_0$. Combining \eqref{eq:prod1} and \eqref{eq:prod2}, we obtain 
\begin{equation*}
\begin{aligned}
&~ \sum_{i=1}^k \left(\prod_{j=i}^{k} (1-a\gamma_j)\right) C_0 \theta_{i-1}\\
= &~ \frac{C_0}{a}\sum_{i=1}^{N_0-1} \prod_{j=i}^{k}(1-a\gamma_j)a\gamma_{i-1}\frac{\theta_{i-1}}{\gamma_{i-1}} + \frac{C_0}{a}\sum_{i=N_0}^{k} \prod_{j=i}^{k}(1-a\gamma_j)a\gamma_{i-1}\frac{\theta_{i-1}}{\gamma_{i-1}}  \\
\leq & \frac{C_0}{a}\max_{1\leq i-1 \leq N_{0}-1}\left\{\frac{\theta_{i-1}}{\gamma_{i-1}}\right\}\left[1- \prod_{j=0}^{N_0-2}(1-a\gamma_{j}) \right] \prod_{j=N_0-1}^{k}(1-a\gamma_{j})  +\frac{C_0}{a} [1-a\gamma_k - \prod_{j=N_0-1}^{k}(1-a\gamma_{j}) ]\varepsilon\\
\end{aligned}
\end{equation*}
Taking the limit as $k \to \infty$ and using the fact that \(\prod_{j=N_0-1}^{k} (1-a\gamma_j) \to 0\), we have
\begin{equation*}
\lim_{k\to \infty} \sum_{i=1}^k \left(\prod_{j=i}^{k} (1-a\gamma_j)\right) C_0 \theta_{i-1} = \frac{C_0}{a}\varepsilon
\end{equation*}
Since \(\varepsilon>0\) is arbitrary, the desired result follows.
\end{proof}

\subsection{Proof of Lemma \ref{lem01}}\label{sec:appendix-02}

\begin{proof}
Let $z_k = \gamma_k \upsilon_{k+1} + \sum_{i=1}^{k-1} \gamma_i \left( \prod_{j=i+1}^{k} (1 - a\gamma_j) \right) \upsilon_{i+1}$ and $z_0 = 0$. Define $\rho_{k,i} := \gamma_i \prod_{j=i+1}^k (1 - a\gamma_j)$ and $\rho_{k,k} := \gamma_k$. Then there exists $K > 0$ such that, for all $k \ge i \ge K$, we have $|\rho_{k,i}| \le \gamma_i$ and $a\gamma_i < 1/2$. Without loss of generality, we assume that $\rho_{k,i} \geq 0$, for all $k \geq i \geq K$. From the definition of $z_k$, it follows that
\[
z_k = \sum_{i=1}^k \rho_{k,i} \upsilon_{i+1}, \quad k \geq 1.
\]

Since the martingale difference sequence $\{\upsilon_{k}\}$ is uniformly bounded, it is sub-Gaussian. Thus, there exists a constant $M > 0$ such that, for all $k \ge 0$ and all $w \in \mathbb{R}^n$, 
\[
\mathbb{E} \left[ \exp \left( \langle w, \upsilon_{k+1} \rangle \right) \big| \mathcal{F}_k \right] \leq \exp \left( \frac{M}{2} \|w\|^2 \right).
\]

Therefore, for any $s > K, T > 0, w \in \mathbb{R}^n$ and $C > 0$, let
\[
Z_{i+1} := \exp \left\{  \langle C w, \sum_{k=s}^{i} \rho_{\Lambda_{\gamma}(\lambda_{\gamma}(s)+T),k}   \upsilon_{k+1} \rangle 
- \frac{MC^2}{2} \sum_{k=s}^{i} \rho_{\Lambda_{\gamma}(\lambda_{\gamma}(s)+T),k} ^2 \|w\|^2 \right\},
\]
where $\lambda_\gamma(0) := 0, \lambda_\gamma(i) := \sum_{k=1}^i \gamma_k$, and $\Lambda_\gamma(t) := \sup \{k \geq 0: t \geq \lambda_\gamma(k)\}$. Then, for any $i \geq s$, $\mathbb{E}[Z_{i+1}|\mathcal{F}_i] \leq Z_i$, so $\{Z_i\}_{i\geq s}$ forms a supermartingale. Hence for any $\delta > 0$, and $C > 0$, it holds that
\begin{equation}
\begin{aligned}
 & \mathbb{P} \left( \sup_{s \leq i \leq \Lambda_{\gamma}(\lambda_{\gamma}(s)+T)} \langle w, \sum_{k=s}^i \rho_{\Lambda_{\gamma}(\lambda_{\gamma}(s)+T),k} \upsilon_{k+1} \rangle > \delta \right)\\
= & \mathbb{P} \left( \sup_{s \leq i \leq \Lambda_{\gamma}(\lambda_{\gamma}(s)+T)}  \langle Cw, \sum_{k=s}^i \rho_{\Lambda_{\gamma}(\lambda_{\gamma}(s)+T),k} \upsilon_{k+1} \rangle > C\delta \right)\\
\leq & \mathbb{P} \left( \sup_{s \leq i \leq \Lambda_{\gamma}(\lambda_{\gamma}(s)+T)} Z_{i+1} > \exp \left( C\delta - \frac{MC^2}{2} \sum_{k=s}^{\Lambda_{\gamma}(\lambda_{\gamma}(s)+T)} \rho_{\Lambda_{\gamma}(\lambda_{\gamma}(s)+T),k}^2  \|w\|^2\right) \right)\\
\leq & \exp \left( \frac{MC^2}{2} \|w\|^2 \sum_{k=s}^{\Lambda_{\gamma}(\lambda_{\gamma}(s)+T)} \rho_{\Lambda_{\gamma}(\lambda_{\gamma}(s)+T),k}^2 - C\delta \right).\\
\end{aligned}
\end{equation}

Here, the last inequality is followed by Doob’s maximal inequality and the fact $\mathbb{E}[Z_{s+1}] \leq 1$. Since $C$ is arbitrary, we may set $C = \frac{\delta}{M \|w\|^2 \sum_{k=s}^{\Lambda_{\gamma}(\lambda_{\gamma}(s)+T)} \rho_{\Lambda_{\gamma}(\lambda_{\gamma}(s)+T),k}^2}$ to obtain that
\[
\mathbb{P} \left( \sup_{s \leq i \leq \Lambda_{\gamma}(\lambda_{\gamma}(s)+T)} \left\langle w, \sum_{k=s}^i \rho_{\Lambda_{\gamma}(\lambda_{\gamma}(s)+T),k} \upsilon_{k+1} \right\rangle > \delta \right) 
\leq \exp \left( -\frac{\delta^2}{2M \|w\|^2 \sum_{k=s}^{\Lambda_{\gamma}(\lambda_{\gamma}(s)+T)} \rho_{\Lambda_{\gamma}(\lambda_{\gamma}(s)+T),k}^2} \right).
\]

From the arbitrariness of $w$ and the fact that $\rho_{\Lambda_{\gamma}(\lambda_{\gamma}(s)+T),k} \leq \gamma_k$, we can further deduce that
\[
\mathbb{P} \left( \sup_{s \leq i \leq \Lambda_{\gamma}(\lambda_{\gamma}(s)+T)} \left\| \sum_{k=s}^i \rho_{\Lambda_{\gamma}(\lambda_{\gamma}(s)+T),k} \upsilon_{k+1} \right\| > \delta \right) 
\leq \exp \left( -\frac{\delta^2}{2M \sum_{k=s}^{\Lambda_{\gamma}(\lambda_{\gamma}(s)+T)} \gamma_k^2} \right) 
\leq \exp \left( -\frac{\delta^2}{2MT \gamma_{k'}} \right),
\]
which holds for some $k' \in [s, \Lambda_{\gamma}(\lambda_{\gamma}(s)+T)]$.

Let $n_0:= \inf \{j\in \mathbb{N}: \Lambda_\gamma(jT)>K\}$. For each $j \geq n_0$, there exists $k_j \in [\Lambda_\gamma(jT), \Lambda_\gamma((j+1)T)]$ such that
\[
\mathbb{P} \left( \sup_{\Lambda_\gamma(jT) \leq i \leq \Lambda_\gamma(jT+T)}  \left\| \sum_{k=\Lambda_\gamma(jT)}^i \rho_{\Lambda_{\gamma}(\lambda_{\gamma}(s)+T),k} \upsilon_{k+1} \right\| > \delta \right)  
\leq \exp \left( -\frac{\delta^2}{2MT \gamma_{k_j}} \right).
\]
Therefore, 
\begin{equation}\label{eq:borel}
\sum_{j=n_0}^{\infty} \mathbb{P} \left( \sup_{\Lambda_\gamma(jT) \leq i \leq \Lambda_\gamma(jT+T)}  \left\| \sum_{k=\Lambda_\gamma(jT)}^i \rho_{\Lambda_{\gamma}(\lambda_{\gamma}(s)+T),k} \upsilon_{k+1} \right\| > \delta \right)
\leq \sum_{j=n_0}^{\infty} \exp \left( -\frac{\delta^2}{2MT \gamma_{k_j}} \right)< +\infty.
\end{equation}
The last inequality folows from the fact that $\lim_{k \to +\infty} \gamma_k \log(k) = 0$. Let $\mathcal{E}_j$ denote the event
\[
\left\{ \sup_{\Lambda_\gamma(jT) \leq i \leq \Lambda_\gamma(jT+T)}  \left\| \sum_{k=\Lambda_\gamma(jT)}^i \rho_{\Lambda_{\gamma}(\lambda_{\gamma}(s)+T),k} \upsilon_{k+1} \right\| > \delta \right\}.
\]
By the Borel-Cantelli lemma and \eqref{eq:borel}, we can conclude that $\mathbb{P} \left(\bigcap_{j=1}^{+\infty} \bigcup_{l=j}^{+\infty} \mathcal{E}_l \right) = 0$, which indicates that
\begin{equation}\label{eq:limsupjtjt+1}
\lim_{j \to +\infty} \sup_{\Lambda_\gamma(jT) \leq i \leq \Lambda_\gamma(jT+T)} \left\| \sum_{k=\Lambda_\gamma(jT)}^i \rho_{\Lambda_\gamma(jT+T),k} \upsilon_{k+1} \right\| = 0. 
\end{equation}

For any $j \geq n_0$, 
\[
z_{\Lambda_\gamma(jT+T)} = \left( \prod_{k=\Lambda_\gamma(jT)}^{\Lambda_\gamma(jT+T)} (1-a\gamma_k) \right) z_{\Lambda_\gamma(jT)} + \sum_{k=\Lambda_\gamma(jT)}^{\Lambda_\gamma(jT+T)} \rho_{\Lambda_\gamma(jT+T),k} \upsilon_{k+1}.
\]
Since 
\begin{equation*}
\prod_{k=\Lambda_\gamma(jT)}^{\Lambda_\gamma(jT+T)} (1-a\gamma_k) \leq \prod_{k=\Lambda_\gamma(jT)}^{\Lambda_\gamma(jT+T)} \exp(-a\gamma_k) \leq \exp(-aT),
\end{equation*}
we attain that
\begin{equation}\label{eq:zLambda}
\left\| z_{\Lambda_\gamma(jT+T)} \right\| \leq \exp(-aT) \left\| z_{\Lambda_\gamma(jT)} \right\| 
+ \left\| \sum_{k=\Lambda_\gamma(jT)}^{\Lambda_\gamma(jT+T)} \rho_{\Lambda_\gamma(jT+T),k} \upsilon_{k+1} \right\|. 
\end{equation}
Together with \eqref{eq:limsupjtjt+1}, we can conclude that $\lim_{j \to +\infty} \|z_{\Lambda_\gamma(jT+T)}\| = 0$.

Finally, for any $i$ such that $\Lambda_\gamma(jT) < i \leq \Lambda_\gamma(jT + T)$, it holds that

\begin{equation*}
\begin{aligned}
\| z_{\Lambda_\gamma(jT+T)} \| & = \left\| 
\left( \prod_{k=i}^{\Lambda_\gamma(jT+T)} (1 - a\gamma_k) \right) z_i + \sum_{k=i}^{\Lambda_\gamma(jT+T)} \rho_{\Lambda_\gamma(jT+T),k} \upsilon_{k+1} 
\right\|\\
& \geq \exp(-2aT) \| z_i \| - \left\| \sum_{k=i}^{\Lambda_\gamma(jT+T)} \rho_{\Lambda_\gamma(jT+T),k} \upsilon_{k+1} \right\|\\
& \geq \exp(-2aT) \| z_i \| - \left\| \sum_{k=\Lambda_\gamma(jT)}^{\Lambda_\gamma(jT+T)} \rho_{\Lambda_\gamma(jT+T),k} \upsilon_{k+1} \right\| - \left\| \sum_{k=\Lambda_\gamma(jT)}^i \rho_{\Lambda_\gamma(jT+T),k} \upsilon_{k+1} \right\|\\
& \geq \exp(-2aT) \| z_i \| - 2 \sup_{\Lambda_\gamma(jT) \leq i \leq \Lambda_\gamma(jT + T)} \left\| \sum_{k=\Lambda_\gamma(jT)}^i \rho_{\Lambda_\gamma(s)+T,k} \upsilon_{k+1} \right\|,\\
\end{aligned}
\end{equation*}
where the first inequality follows from $1-a\gamma_k \geq \exp(-2a\gamma_k)$, when $k\geq K$, and from the fact that $\sum_{k=\Lambda_\gamma(jT)}^{\Lambda_\gamma(jT+T)}\gamma_k\leq T$. As a result, we have
\begin{equation}\label{eq:resultsupz}
\sup_{\Lambda_\gamma(jT) \leq i \leq \Lambda_\gamma(jT+T)} \| z_i \| \leq \exp(2aT) \left( \| z_{\Lambda_\gamma(jT+T)} \| + 2 \sup_{\Lambda_\gamma(jT) \leq i \leq \Lambda_\gamma(jT+T)} \left\| \sum_{k=\Lambda_\gamma(jT)}^i \rho_{\Lambda_\gamma(s)+T,k} \upsilon_{k+1} \right\| \right). 
\end{equation}
Combining \eqref{eq:limsupjtjt+1}, \eqref{eq:zLambda}, and \eqref{eq:resultsupz} together, we achieve that
\[
\limsup_{k \to +\infty} \| z_k \|\leq \lim_{j \to +\infty} \sup_{\Lambda_\gamma(jT) \leq i \leq \Lambda_\gamma(jT+T)} \| z_i \|  = 0.
\]
This completes the proof.
\end{proof}

\subsection*{Proof of Lemma \ref{claim3}}\label{appendix:claim3}

\begin{proof}

Due to the uniform boundedness of martingale difference sequence $\{\hat{\upsilon}_{k+1}\}$, it follows that $\{\hat{\upsilon}_{k+1}\}$ is sub-Gaussian. That is, there exists $M>0$ such that for any ${\bm y}\in \bb{R}^n$,
\begin{equation*}
		\bb{E}\left[ \exp\left( \inner{{\bm y}, \hat{\upsilon}_{k+1}} \right) | \ca{F}_k \right] \leq \exp\left( \frac{M}{2}\norm{{\bm y}}^2 \right).
	\end{equation*}
For any ${\bm y} \in \Rn$ and $C > 0$, define 
        \begin{equation*}
            Y_{i+1} := \exp\left[  \inner{C{\bm y}, \sum_{k = s}^i \theta_k \hat{\upsilon}_{k+1}} - \frac{MC^2}{2}\sum_{k = s}^i \theta_k^2 \norm{{\bm y}}^2  \right].
        \end{equation*} 
        Then, for any $i\geq 0$, we have $\bb{E}[Y_{i+1} | \ca{F}_i] \leq Y_{i}$. 
	Hence, for any $\delta > 0$ and any $C > 0$, it holds that 
	\begin{equation*}
		\begin{aligned}
			&\bb{P}\left( \sup_{s\leq i \leq \Lambda_{\eta}(\lambda_{\eta}(s) + T)} \inner{{\bm y}, \sum_{k = s}^i \theta_k \hat{\upsilon}_{k+1}} > \delta \right)={}\bb{P}\left( \sup_{s\leq i \leq \Lambda_{\eta}(\lambda_{\eta}(s) + T)} \inner{C{\bm y}, \sum_{k = s}^i \theta_k \hat{\upsilon}_{k+1}} > C\delta \right)\\
			\leq{}& \bb{P}\left( \sup_{s\leq i \leq \Lambda_{\eta}(\lambda_{\eta}(s) + T)} Y_i > \exp\left( C\delta - \frac{MC^2}{2} \sum_{k = s}^{\Lambda_{\eta}(\lambda_{\eta}(s) + T)} \theta_k^2 \norm{{\bm y}}^2 \right) \right)\\
			\leq{}& \exp\left( \left(\frac{M}{2}\norm{{\bm y}}^2 \sum_{k = s}^{\Lambda_{\eta}(\lambda_{\eta}(s) + T)} \theta_k^2\right)C^2   - C\delta  \right),
		\end{aligned}
	\end{equation*}
where the last inequality follows from Doob’s maximal inequality and the fact that $\bb{E}[Y_{s+1}] \leq 1$. Then,  by choosing $C = \frac{\delta}{M\norm{{\bm y}}^2 \sum_{k=s}^{\Lambda_\eta(\lambda_\eta(s)+T)} \theta_k^2}$, we obtain
	\begin{equation*}
		\bb{P}\left( \sup_{s\leq i \leq \Lambda_{\eta}(\lambda_{\eta}(s) + T)} \inner{{\bm y}, \sum_{k = s}^i \theta_k \hat{\upsilon}_{k+1}} > \delta \right) \leq \exp \left(\frac{-\delta^2}{2M\norm{{\bm y}}^2 \sum_{k = s}^{\Lambda_{\eta}(\lambda_{\eta}(s) + T)}\theta_k^2 }\right). 
	\end{equation*}
	By taking ${\bm y} = \frac{\sum_{k=s}^i \theta_k \hat{\upsilon}_{k+1}}{\norm{\sum_{k=s}^i \theta_k \hat{\upsilon}_{k+1}}}$, we deduce
	\begin{equation*}
		\bb{P}\left( \sup_{s\leq i \leq \Lambda_{\eta}(\lambda_{\eta}(s) + T)} \norm{\sum_{k = s}^i \theta_k \hat{\upsilon}_{k+1}} > \delta \right) \leq  \exp\left(\frac{-\delta^2}{2M \sum_{k = s}^{\Lambda_{\eta}(\lambda_{\eta}(s) + T)} \theta_k^2 }\right),
	\end{equation*}
    
We claim that $\sum_{k = s}^{\Lambda_{\eta}(\lambda_{\eta}(s) + T)} \theta_k^2\leq T\frac{\theta_{k'}^2}{\eta_{k'}}$
for some $k' \in [s, \Lambda_{\eta}(\lambda_{\eta}(s) + T)]$. Suppose for contradiction that it fails. Then, for all $k \in [s, \Lambda_\eta(\lambda_\eta(s)+T)]$, 
\begin{equation*}
\eta_{k}\sum_{k = s}^{\Lambda_{\eta}(\lambda_{\eta}(s) + T)} \theta_k^2 > T\theta_{k}^2.
\end{equation*}
Summing over $k$ gives
\begin{equation*}
\sum_{k = s}^{\Lambda_{\eta}(\lambda_{\eta}(s) + T)} \eta_{k}  \sum_{k = s}^{\Lambda_{\eta}(\lambda_{\eta}(s) + T)} \theta_k^2 > T \sum_{k = s}^{\Lambda_{\eta}(\lambda_{\eta}(s) + T)} \theta_{k}^2,
\end{equation*}
which contradicts the definition of $\lambda_\eta$ and $\Lambda_\eta$, since $T \geq \sum_{k=s}^{\Lambda_\eta(\lambda_\eta(s)+T)} \eta_k$. Hence the claim holds.

Consequently, there exists $k' \in [s, \Lambda_\eta(\lambda_\eta(s)+T)]$ such that
	\begin{equation*}
		\bb{P}\left( \sup_{s\leq i \leq \Lambda_{\eta}(\lambda_{\eta}(s) + T)} \norm{\sum_{k = s}^i \theta_k \hat{\upsilon}_{k+1}} > \delta \right) \leq \exp\left(\frac{-\delta^2}{2M T\frac{\theta_{k'}^2}{\eta_{k'}} }\right).
	\end{equation*}
For each $j \geq 0$, there exists $k_j\in [\Lambda(jT), \Lambda((j+1)T) ]$, such that
	\begin{equation*}
		\begin{aligned}
			&\sum_{j = 0}^{+\infty} \bb{P}\left(\sup_{\Lambda_{\eta}(jT)\leq i \leq \Lambda_{\eta}( jT+T)}\norm{ \sum_{k = s}^{i}\theta_k \hat{\upsilon}_{k+1}} \geq \delta \right) \\
			\leq{}& \sum_{j=0}^{+\infty} \exp\left( \frac{-\delta^2}{2MT \eta_{k_j}^{-1}\theta_{k_j}^2} \right) \leq \sum_{k=0}^{+\infty} 2 \exp\left( \frac{-\delta^2}{2MT\frac{\theta_k^2}{\eta_k}} \right) < +\infty. 
		\end{aligned}
	\end{equation*}
	Here the last inequality holds from the fact that $\lim_{k \to +\infty}\frac{\theta_k^2}{\eta_k} \log(k) = 0 $. According to Borel-Cantelli Theorem, we obtain that
	\begin{equation*}
		\lim_{j \to +\infty} \sup_{\Lambda_{\eta}(jT)\leq i \leq \Lambda_{\eta}( jT+T)}\norm{ \sum_{k = \Lambda_{\eta}(jT)}^{i}\theta_k \hat{\upsilon}_{k+1}} = 0. 
	\end{equation*}
	Finally, for any $jT \leq s\leq jT+T$, 
	\begin{equation*}
		\begin{aligned}
		    &\sup_{s\leq i \leq \Lambda_{\eta}( \lambda_{\eta}(s)+T)}\norm{ \sum_{k = \Lambda_{\eta}(jT)}^{i}\theta_k \hat{\upsilon}_{k+1}} \\
        \leq{}& 2\sup_{\Lambda_{\eta}(jT)\leq i \leq \Lambda_{\eta}( jT+T)}\norm{ \sum_{k = \Lambda_{\eta}(jT)}^{i}\theta_k \hat{\upsilon}_{k+1}} + \sup_{\Lambda_{\eta}((j+1)T)\leq i \leq \Lambda_{\eta}( (j+2)T)}\norm{ \sum_{k = \Lambda_{\eta}(jT + T)}^{i}\theta_k \hat{\upsilon}_{k+1}}. 
		\end{aligned}
	\end{equation*}
	Then we achieve that 
	\begin{equation*}
		\lim_{s \to +\infty} \sup_{s\leq i \leq \Lambda_{\eta}( \lambda_{\eta}(s)+T)}\norm{ \sum_{k = s}^{i}\theta_k \hat{\upsilon}_{k+1}} = 0,
	\end{equation*}
which completes the proof. 
\end{proof}

\end{document}